\documentclass[12pt,reqno]{amsart}
\usepackage[margin=1.25in]{geometry} 
\usepackage{graphicx}
\usepackage{amssymb}
\usepackage{dsfont}
\usepackage{aliascnt}
\usepackage{doi}
\usepackage{epstopdf}
\usepackage{tikz}
\usetikzlibrary{3d}
\usetikzlibrary {arrows.meta}

\newcommand\myarrowL[3]{
\draw[arrows = {-Stealth[length=6.5pt, inset=4pt, width=7pt]}, line width=0.55pt, color=#3] #1 to[bend left=25] #2;}

\usepackage{esint}

\newtheorem{theorem}{Theorem}[section]

\newaliascnt{corx}{thmx}

\aliascntresetthe{corx}

\newaliascnt{lemma}{theorem}
\newtheorem{lemma}[lemma]{Lemma}
\aliascntresetthe{lemma}

\newaliascnt{proposition}{theorem}
\newtheorem{proposition}[proposition]{Proposition}
\aliascntresetthe{proposition}

\newaliascnt{corollary}{theorem}
\newtheorem{corollary}[corollary]{Corollary}
\aliascntresetthe{corollary}

\newaliascnt{conjecture}{theorem}
\newtheorem{conjecture}[conjecture]{Conjecture}
\aliascntresetthe{conjecture}

\newaliascnt{question}{theorem}

\aliascntresetthe{question}

\theoremstyle{definition}
\newtheorem*{definition*}{Definition}
\newtheorem*{example*}{Example}
\newtheorem*{remark*}{Remark}

\newcommand{\arxiv}[1]{%
\href{https://arxiv.org/abs/#1}{ArXiv:#1}}

\newcommand{\B}{{\mathbb B}}
\newcommand{\e}{\varepsilon}

\newcommand{\R}{{\mathbb R}}

\newcommand{\Rn}{{{\mathbb R}^n}}
\newcommand{\Rnp}{{{\mathbb R}^{n+1}}}

\newcommand{\Sphn}{{{\mathbb S}^n}}
\newcommand{\xh}{\hat{x}}
\newcommand{\yh}{\hat{y}}
\newcommand{\xih}{\hat{\xi}}
\newcommand{\Z}{{\mathbb Z}}

\newcommand{\s}{{\text{\small $\star$}}}
\newcommand{\deltastar}{\Delta^{\! \s}}
\newcommand{\deltastart}{\Delta^{\! \s T}}

\DeclareMathOperator{\essinf}{{ess{\,}inf}}

\DeclareMathOperator{\dist}{dist}
\DeclareMathOperator{\diam}{diam}

\DeclareMathOperator{\capzero}{Cap_0}
\DeclareMathOperator{\capone}{Cap_1}
\DeclareMathOperator{\captwo}{Cap_2}

\DeclareMathOperator{\capq}{Cap_\mathit{q}}

\DeclareMathOperator{\capntwo}{Cap_{\mathit{n}-2}}
\DeclareMathOperator{\capnone}{Cap_{\mathit{n}-1}}

\title[Logarithmic vs Newtonian capacity]{Toward P\'{o}lya and Szeg\H{o}'s conjecture for logarithmic vs Newtonian capacity}
\author{Carrie Clark and Richard S. Laugesen}
\address{University of Illinois, Urbana IL 61801, USA}
\email{carrieclark435@gmail.com}
\address{University of Illinois, Urbana IL 61801, USA}
\email{Laugesen@illinois.edu}
\subjclass[2020]{Primary 31B15; Secondary 31A15}
\keywords{Riesz capacity, logarithmic capacity, Newtonian capacity, Neumann Green function, symmetrization}
\hypersetup{
  pdftitle={Toward Polya and Szego's conjecture for logarithmic vs Newtonian capacity},
  pdfauthor={Carrie Clark and Richard S. Laugesen},
  pdfsubject={Riesz capacities, insulated strip energies, and symmetrization},
  pdfkeywords={Riesz capacity, logarithmic capacity, Newtonian capacity, Neumann Green function, symmetrization}
}

\begin{document}

\begin{abstract}
P\'{o}lya and Szeg\H{o} conjectured in 1945 that the disk achieves the largest electrostatic capacity among all planar sets with given logarithmic capacity. This question generalizes naturally to balls and Newtonian capacities in higher dimensions.

The conjecture remains open. This paper establishes a conditional result: the conjecture would follow if among all lower dimensional sets in a horizontal strip with Neumann boundary conditions, the transverse component of the Dirichlet integral of the equilibrium potential could be shown minimal for the lower dimensional ball. As supporting evidence toward this latter property, the paper shows extremality of the ball for $L^p$-norms of the potentials, via Baernstein star-function symmetrization techniques.
\end{abstract}

\maketitle

\section{\bf Introduction}


P\'{o}lya and Szeg\H{o} conjectured in 1945 that under the passage from logarithmic to Newtonian capacity, the disk retains more capacity than any other planar set:
\begin{equation} \label{PS2}
\capone(K) \leq \frac{2}{\pi} \capzero(K)
\end{equation}
for all compact planar sets $K$, with equality when $K$ is a disk \cite[Conjecture (1.3)]{PS45}. Here $\capone(K)$ is the Newtonian or electrostatic capacity of $K$ regarded as a set in $\R^3$ and $\capzero(K)$ is its logarithmic capacity in $\R^2$.

The initial challenge posed by the conjecture is to somehow relate the two capacities that live in different dimensions. A recent paper of ours met that challenge by developing a $1$-parameter family of insulated strip energies that yield the logarithmic and Newtonian energies as limiting cases \cite{CL26}.

The next challenge, forming the focus of the current paper, is to obtain a tractable formula for the derivative of the strip energy with respect to the thickness, a task made more difficult by the logarithmic divergence of the strip potential at infinity. With the derivative formula in hand, we then show that P\'{o}lya and Szeg\H{o}'s conjecture and a higher dimensional analogue would follow from a conjectured sharp ``transverse component of the Dirichlet energy'' inequality between competing harmonic potentials in an infinite strip.

The third and remaining challenge for the conjecture will be to prove the transverse energy inequality. While that remains a task for the future, there is reason to believe it is within reach. Notably, we establish in this paper a sharp symmetrization inequality between the $L^p$-norms of the competing potentials.

\subsection*{History of the problem} P\'{o}lya and Szeg\H{o} restated the conjecture in their celebrated book “Isoperimetric Inequalities in Mathematical Physics” \cite{PS51} in 1951; see page 10 and item {\#}17 on page 18. Earlier, in 1931, they had proved a weaker inequality $\capone(K) \leq \capzero(K)$ \cite[p.{\,}19]{PS31}. As far as we know the best partial result is due to Szeg\H{o} \cite{S52,S56}, who showed using conformal mapping techniques that if the compact set $K \subset \R^2$ is connected then
\[
\capone(K) \leq (1.07) \frac{2}{\pi} \capzero(K) .
\]
Thus Szeg\H{o} got within 7\% of the conjectured optimizer. His conformal mapping approach does not generalize to higher dimensions.

The conjectured inequality \eqref{PS2} extends to a whole family of extremal conjectures for pairs of Riesz capacities. We recently explored this family and proved some of the claims \cite{CL25b}. Because endpoint cases of Riesz capacity include the diameter and area of the set \cite{CL25a}, this family of problems reveals that P\'{o}lya and Szeg\H{o}'s conjecture is closely related to the classical isodiametric inequality and to Watanabe's theorem \cite{W83} that says the disk minimizes Riesz capacity among sets of given area.

Equality does indeed hold in \eqref{PS2} for the unit disk, because the disk has logarithmic capacity $1$ and Newtonian capacity $2/\pi$. (Newtonian capacity is normalized to equal $1$ for the unit ball.) At an opposite extreme, an interval has positive logarithmic capacity but Newtonian capacity $0$, and so the left side of inequality \eqref{PS2} has no positive lower bound in general.

\subsection*{Higher dimensional P\'{o}lya--Szeg\H{o} conjectures} Write $\capntwo(K)$ for the Newtonian capacity of a compact set $K \subset \Rn$, so that $\capnone(K)$ is the Newtonian capacity when $K$ is regarded as lying in $\Rnp$. Logarithmic capacity is $\capzero$. Definitions are given in \autoref{sec:capacities}. Each of these capacities scales linearly, with $\text{Cap}(sK) = s \, \text{Cap}(K)$ for all $s>0$.

Physically, the Newtonian energy $1/\capone(K)$ in $3$ dimensions represents the electrostatic energy (work) required to bring a unit of positive charges in from infinity and place the charges on $K$, working against their mutual repulsion.

P\'{o}lya and Szeg\H{o}'s conjecture extends naturally to all dimensions. We formulate it in terms of maximizing the scale-invariant ratio of capacities.
\begin{conjecture}[Ratio of Newtonian capacities] \label{conj:riesz}
Let $n \geq 2$. Among all compact $K \subset \Rn$ with positive $(n-2)$-capacity, the closed $n$-ball maximizes the ratio
\[
\frac{\capnone(K)}{\capntwo(K)} .
\]
\end{conjecture}
\subsubsection*{Remarks}
1. If the conjecture holds then the maximum value of the ratio would be $2/\pi$ when $n=2$, by evaluating the Newtonian and logarithmic capacities of the unit disk, and when $n \geq 3$ it would be
\[
\left. \left( \frac{\Gamma(n/2)}{\Gamma(1/2) \Gamma((n+1)/2)} \right)^{\! \! 1/(n-1)}
\right/
2 \left( \frac{\Gamma(n/2) \Gamma((n-1)/2)}{\Gamma(1/2) \Gamma(n-1)} \right)^{\! \! 1/(n-2)}
\]
by using known capacity formulas for the ball $\overline{\B}^n$ (see \cite[Appendix A]{CL25a}). Choosing $n=3$ in this formula gives $1/\sqrt{2}$, and so in particular the conjecture asserts for $K \subset \R^3$ that
\[
\captwo(K) \leq \frac{1}{\sqrt{2}} \capone(K) .
\]
2. The $1$-dimensional case of the conjecture is known and straightforward to prove: take $(n,p,q)=(1,-1,0)$ in \cite[Proposition 3]{CL25b}.

\subsection*{The transverse energy conjecture}
The key to the capacity conjecture, in our approach, is the conjecture below on harmonic functions in the horizontal strip
\[
S=\Rn \times (-1,1) = \{ (x,z) : x \in \Rn, -1<z<1 \} , \qquad n \geq 2 .
\]
The strip, depicted in \autoref{fig:strip}, arises as an intermediate domain between $\Rn$ and $\Rnp$, where we identify $\Rn$ with the central hyperplane $\Rn \times \{ 0 \}$. The harmonic functions $U$ and $V$ in what follows are equilibrium potentials for the set $K$ and ball $B$ in the strip, subject to Neumann conditions on the boundary.

The notion of inner capacity zero is defined in \autoref{sec:background}.

\begin{figure}
\begin{center}
\begin{tikzpicture}[x={(2cm,0cm)},y={(0.8cm,0.4cm)},z={(0cm,1cm)},scale=1.8]

\fill[gray!9] (1.45,1,1) -- (1.45,1,-1) -- (1.45,-1,-1) -- (-1.2,-1,-1) -- (-1.2,-1,1) -- (-1.2,1,1) -- cycle;
\draw[-,dashed] (1.45,1,1) -- (1.45,1,-1);
\draw[-,dashed] (1.45,-1,1) -- (1.45,-1,-1);
\draw[-,dashed] (-1.2,1,0.18) -- (-1.2,1,-1);
\draw[-,dotted,thick] (-1.2,1,0.2) -- (-1.2,1,1);
\draw[-,dashed] (-1.2,-1,1) -- (-1.2,-1,-1);

\draw[->] (-0.73,0,0) -- (0.95,0,0);
\draw[->] (0,1.2,0) -- (0,-1.2,0) ;
\draw[-] (0,0,0) -- (0,0,-1);
\draw[dotted, thick] (0,0,-1.4) -- (0,0,-1);
\draw[-] (0,0,-1.4) -- (0,0,-2);
\draw[-] (-0.012,0,1) -- (0.012,0,1) node[right] {$z=1$};
\draw[-] (-0.012,0,-1) -- (0.012,0,-1) node[right] {$z=-1$};
 \filldraw[fill opacity=0.9,gray!37,draw=black] plot[smooth,samples=50,domain=0:356,variable=\t] ({0.55*(cos(\t)+0.1*sin(5*\t)+0.05},{0.72*(sin(\t)+0.02*sin(2*\t))}) -- cycle;

\draw[-] (0,0,0) -- (0,0,0.6);
\draw[dotted, thick] (0,0,0.6) -- (0,0,1);
\draw[->] (0,0,1) -- (0,0,2);

\draw (1.1,0,1.7) node {\large$S=\Rn\times (-1,1)$};
\draw (0.27,0,0.13) node {$K\subset\Rn$};
\draw (-1.2,0,0.3) node {$U=E$};
\draw (-1.2,0,1.4) node[above] {{\large$\frac{\partial U}{\partial \nu}$}$\,=0$};
\draw (-1.2,0,-2) node[above] {{\large$\frac{\partial U}{\partial \nu}$}$\,=0$};

\myarrowL{(-1,0,1.65)}{(-0.3,0,1)}{black}
\myarrowL{(-1.03,0,-1.6)}{(-0.3,0,-1)}{black}

\myarrowL{(-1.,0,0.3)}{(-0.15,0,0.05)}{black}

\fill[fill opacity=0.1,gray] (1.45,1,1) -- (1.45,-1,1) -- (-1.2,-1,1) -- (-1.2,1,1) -- cycle;
\fill[fill opacity=0.1,gray] (1.45,1,-1) -- (1.45,-1,-1) -- (-1.2,-1,-1) -- (-1.2,1,-1) -- cycle;
\end{tikzpicture}
\end{center}
\caption{\label{fig:strip}
The compact set $K$ for \autoref{conj:neumann} lies within the central hyperplane of the strip $S=\Rn\times(-1,1)$. The function $U$ is harmonic in $S\setminus K$ and satisfies the Dirichlet boundary condition $U=E$ on $K$ and Neumann condition $\partial U /\partial \nu = 0$ on the upper and lower strip boundaries where $z=\pm 1$.
}
\end{figure}
\begin{conjecture}[Ball minimizes transverse component of energy] \label{conj:neumann}
Fix $n\geq 2$ and $E \in \R$, and if $n \geq 3$ then further suppose $E>0$. Let $K \subset \Rn$ be compact. Suppose $U$ is a superharmonic function on $S$ that is harmonic on $S \setminus K$ with $U=E$ on $K$ (except perhaps on a subset of inner $(n-1)$-capacity zero) and $\partial U/\partial \nu = 0$ on $\partial S$, and that as $r=|x| \to \infty$ one has
\[
\begin{split}
U(x,z)
& =
\begin{cases}
\dfrac{1}{r^{n-2}} + O \! \left( \dfrac{1}{r^{n-1}} \right) & \text{if $n \geq 3$,} \\[1em]
  \log \dfrac{1}{r} + O \! \left( \dfrac{1}{r} \right) & \text{if $n=2$,}
\end{cases}
\\
\nabla U(x,z) & = - \frac{b_n}{r^{n-1}} \, (x/r,0) +O\!\left( \frac{1}{r^{n}}\right),
\end{split}
\]
where $b_2=1$ and $b_n=n-2$ for $n \geq 3$.

Let $B \subset \Rn$ be a closed ball. If $V$ is superharmonic in $S$ and harmonic on $S \setminus B$ with $V=E$ on $B$ (except perhaps on a subset of inner $(n-1)$-capacity zero) and $\partial V/\partial \nu = 0$ on $\partial S$, and if $V$ and $\nabla V$ satisfy the above decay estimates when $|x| \to \infty$, then $V$ has less energy than $U$ in the transverse (vertical) direction, meaning
\begin{equation} \label{eq:conjconclusion}
\int_S \left( \frac{\partial U}{\partial z} \right)^{\!\! 2} \,dxdz \geq \int_S \left( \frac{\partial V}{\partial z} \right)^{\!\! 2} \, dxdz .
\end{equation}
Equality holds if and only if the compact set $K \subset \Rn$ equals the union of a translate of $B$ and a set of inner $(n-1)$-capacity zero.
\end{conjecture}

\subsubsection*{Remark}
The transverse energy conclusion \eqref{eq:conjconclusion} in the conjecture turns out to be equivalent in dimensions $n \geq 3$ to the reverse inequality on the horizontal component of the energy, namely
\[
\int_S |\nabla_{\! x} U|^2\,dxdz \leq \int_S |\nabla_{\! x} V|^2 \, dxdz .
\]
The justification is that the total Dirichlet integrals agree, meaning
\[
\int_S |\nabla U|^2\,dxdz = \int_S |\nabla V|^2 \, dxdz ,
\]
where this agreement can be seen by a formal argument with Green's formula or else rigorously by using \autoref{StripDirichletReciprocal}. In $2$ dimensions, one can proceed similarly except taking the difference of the two Dirichlet integrals in order to obtain a convergent expression (using \autoref{StripDirichlet2}).

\subsection*{Main result}
P\'{o}lya and Szeg\H{o}'s capacity conjecture would follow from the transverse energy conjecture.
\begin{theorem} \label{th:main}
\[
\text{\autoref{conj:neumann} (transverse energy)} \quad \Longrightarrow \quad \text{\autoref{conj:riesz} (capacity ratio)}
\]
\end{theorem}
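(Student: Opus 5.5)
The plan is to use the one-parameter family of insulated strip energies from \cite{CL26}. For a compact $K\subset\Rn$ and thickness $h>0$, place $K$ in the central hyperplane of the strip $\Rn\times(-h,h)$ with Neumann conditions on the faces, and let $\mathcal{E}_h(K)$ denote the corresponding (suitably renormalized) equilibrium energy. As $h\to 0$ the strip collapses onto $\Rn$, and $\mathcal{E}_h(K)$ recovers the $(n-2)$-energy of $K$, after subtracting an additive constant times $\log h$ when $n=2$ and multiplying by a power of $h$ when $n\ge3$. As $h\to\infty$ it recovers the $(n-1)$-energy $1/\capnone(K)$ in $\Rnp$.

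By rescaling, it suffices to compare $K$ with a ball $B$ having the same $(n-2)$-capacity. It then suffices to show that the normalized difference $\mathcal{E}_h(K)-\mathcal{E}_h(B)$ is nondecreasing in $h$. At $h\to0$ this difference is $0$, since the capacities match. At $h\to\infty$ it becomes (a positive multiple of) $1/\capnone(K)-1/\capnone(B)\ge0$. That inequality gives $\capnone(K)\le\capnone(B)$, which is the ratio conjecture.

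To obtain monotonicity, I would first rescale the strip of thickness $h$ to the fixed strip $S=\Rn\times(-1,1)$, replacing $K$ by $K/h$. I would then differentiate the energy in $h$ using a Hadamard-type variational formula. Stretching the vertical variable $z\mapsto tz$ changes the Dirichlet integral $\int|\nabla_xU|^2+t^{-2}(\partial_zU)^2$. By the minimality of the equilibrium potential, the derivative of the energy is expressed, up to sign and normalization, through the transverse integral $\int_S(\partial U/\partial z)^2\,dxdz$ of the normalized potential $U$. Here $U=E$ on $K$, and its asymptotics are exactly those prescribed in \autoref{conj:neumann}; the constant $E$ encodes the energy, which explains why $E>0$ is needed when $n\ge3$. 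Comparing the formulas for $K$ and $B$ at each thickness, \autoref{conj:neumann} then yields the sign of the derivative of the difference. One subtlety is that the competing balls must match the same $E$ at each $h$, rather than the same capacity. I would handle this by comparing $K/h$ with the ball $B_h$ whose strip potential has the same $E$, and using scaling to show that the derivative inequality integrates to the desired inequality between the endpoint limits.

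The main obstacle is making the derivative formula rigorous. For $n=2$, the strip potential diverges logarithmically at infinity, so the Dirichlet integrals do not converge and must be replaced by differences. The tools for this are the renormalized Green-formula identities \autoref{StripDirichletReciprocal} and \autoref{StripDirichlet2}. A second difficulty is justifying differentiability in $h$, including the interchange of limits and the $O(r^{-n})$ decay of the gradient error terms. I would first prove Lipschitz continuity of $h\mapsto\mathcal{E}_h$ by testing each minimization problem with the stretched potential from the other thickness. This gives two-sided difference-quotient bounds whose limits are the transverse integrals, so monotonicity follows from the conjecture without needing full differentiability.
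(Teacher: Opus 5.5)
\textbf{There is a genuine gap in the central comparison step.} You reduce to showing that $\mathcal{E}_h(K)-\mathcal{E}_h(B)$ is nondecreasing for a fixed ball $B$ matched to $K$ at $h\to0$. \autoref{conj:neumann} cannot deliver this. It compares $\int_S(\partial_zU)^2$ with $\int_S(\partial_zV)^2$ only when both normalized potentials equal the \emph{same} constant $E$ on their sets, i.e.\ only at thicknesses where $E_K(h)=E_B(h)$. For a fixed ball this happens only where the two energy curves cross. So you get a derivative comparison at crossings and no information elsewhere, and monotonicity of the difference does not follow.

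Nor can a crossing argument be started at $h=0$. The normalized energies of $K$ and $B$ have the same limit there, and nothing tells you the sign of $E_K-E_B$ for small $h$. This is why the paper matches at the other end, $\capnone(B)=\capnone(K)$. It gets $E_K(t)<E_B(t)$ for all large $t$ from the large-$t$ expansion in \cite[Corollary 3.2]{CL26}, combined with the sharp second-moment inequality for $(n-1)$-equilibrium measures from \cite{CL24}. The case of a ball plus a set of inner capacity zero is handled separately via \autoref{le:Z}. At the largest $t_0$ where the strict inequality fails, $E_K(t_0)=E_B(t_0)$ and $E_K'(t_0)\le E_B'(t_0)$. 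But the strict (equality-case) form of the conjecture together with \autoref{EKderivDirichlet} gives $(tE_K)'>(tE_B)'$ at $t_0$, a contradiction; then $t\to0$ finishes.

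That argument needs the derivative at the particular point $t_0$, which the paper takes from the $C^1$ regularity in \cite[Theorem 3.4]{CL26}. Your claim that two-sided trial-function bounds make differentiability unnecessary is unsupported. The lower difference-quotient bound involves the horizontal and transverse Dirichlet integrals of $u_t$ at the moving thickness $t$. It converges to the transverse-integral formula only if those integrals are continuous in $t$, which you never address; the paper notes that without differentiability one gets only one-sided Dini bounds.

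Your one-line fallback, comparing with the energy-matched ball $B_h$, is the right instinct, but the mechanism that makes it work is missing. It can be supplied as follows. Let $B_\beta\subset\Rn$ be the closed ball of radius $\beta$ centered at the origin, and let $\beta(h)$ be the radius with $E_{B_{\beta(h)}}(h)=E_K(h)$.
\begin{itemize}
\item The scaling law for $G_t$ gives the Euler relation $\beta\,\partial_\beta E_{B_\beta}(h)=-(n-2)E_{B_\beta}(h)-\partial_h\big(hE_{B_\beta}(h)\big)$ when $n\ge3$. When $n=2$ the same holds with $-(n-2)E_{B_\beta}(h)$ replaced by $-1/h$. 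By \autoref{EKderivDirichlet} the right side is strictly negative.
\item Given $C^1$ regularity, the implicit function theorem makes $\beta$ continuously differentiable. Differentiating $E_K(h)=E_{B_{\beta(h)}}(h)$ and applying the conjecture (even its nonstrict form) with \autoref{EKderivDirichlet} yields $\beta'\le0$.
\item Fix $h_1$ and set $B=B_{\beta(h_1)}$. Then $E_K(h)\ge E_B(h)$ for $h>h_1$ and $E_K(h)\le E_B(h)$ for $h<h_1$. Letting $h\to\infty$ and $h\to0$ gives $\capnone(K)\le\capnone(B)$ and $\capntwo(K)\ge\capntwo(B)$.
\end{itemize}
This route would even avoid the moment inequality and the equality case of the conjecture. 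None of it is in your proposal, however, so as written the comparison step that carries the whole theorem is missing.
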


\subsubsection*{Strategy of the proof} We connect the two different capacities in the P\'{o}lya--Szeg\H{o} conjecture through a physically natural one-parameter family that we investigated recently in \cite{CL26}. Namely, we enclose the conductor $K$ in an insulated strip in $\Rnp$ and vary the strip thickness from $0$ to $\infty$. By taking the thickness to $\infty$ one recovers the $(n-1)$-capacity, which is Newtonian capacity in $\Rnp$. At the other extreme, letting the thickness tend to $0$ yields the $(n-2)$-capacity, which is Newtonian capacity in $\Rn$.

To follow this pathway and prove the theorem requires additional stepping stones.
\begin{itemize}
	\item A sharp inequality on moments of the Newtonian equilibrium measure, provided by our recent work \cite{CL24}, allows us to compare the strip energies for $K$ and the ball $B$ as the strip thickness tends to infinity.
	\item A variational formula enables the strip energy to be differentiated with respect to the thickness. This formula, which we develop in \autoref{EKderivDirichlet}, is expressed in terms of the $L^2$ norm of the transverse derivative of the equilibrium potential. The case $n=2$ must be treated with particular care, due to the logarithmic divergence of the potential at infinity and slow decay of the gradient.
	\item Relying on the energy derivative formula, \autoref{conj:neumann} gives a sharp inequality between the derivatives of the strip energy for $K$ and for the ball $B$, at each thickness where their energies agree. Thus we may track those energies from $t=\infty$ all the way down to $t=0$, yielding the desired capacity comparison for \autoref{conj:riesz}.
\end{itemize}
The limiting process as $t \to 0$ turns strict inequalities into nonstrict, and so our method is unable to analyze the equality case in \autoref{conj:riesz}.

\subsection*{Outline of the paper} Newtonian and logarithmic energies and capacities are defined in \autoref{sec:capacities}. In \autoref{sec:stripenergy} we recall the strip kernels, which are Green functions satisfying Neumann conditions at the boundary of the strip, and then define their corresponding energies. The crucial derivative formula for the energy with respect to the strip thickness is stated in \autoref{sec:stripenergy} and proved in \autoref{sec:dirichlet3} and \autoref{sec:dirichlet2}. The main result \autoref{th:main} is then derived in \autoref{sec:stripmin}.

\autoref{sec:starfninequalities} and \autoref{sec:proofofJvustar} develop a comparison result that supports \autoref{conj:neumann} in the sense that the harmonic function associated with the ball has larger $L^p$-norm on each horizontal slice than the harmonic function associated with the arbitrary set.

\autoref{sec:background} establishes foundational facts about equilibrium potentials in the Neumann strip.

\subsection*{Related problems and literature} Extremal results in the literature generally minimize capacity subject to a geometric constraint, rather than a constraint on another capacity as in the current paper. The Poincar\'{e}--Carleman--Szeg\H{o} theorem \cite{PS51} asserts that the set of given volume that minimizes Newtonian capacity $\capntwo$ is the ball, and that in $2$ dimensions the set of given area minimizing logarithmic capacity $\capzero$ is the disk. A beautiful polygonal analogue was proved by Solynin and Zalgaller \cite{SZ04}, that among $N$-sided polygons with given area, logarithmic capacity is minimal for the regular $N$-gon. Extremal problems for conformal modulus under extension of a doubly connected domain were studied  by Laugesen \cite{L93} and Solynin \cite{S92}.

Generalizing from Newtonian to Riesz capacity $\capq$, the ball was shown to minimize among sets of given volume when $n-2<q<n$ by Watanabe \cite[p.{\,}489]{W83}; see also Betsakos \cite{B04a,B04b} and M\'{e}ndez--Hern\'{a}ndez \cite{MH06}. Lastly, for the plane again, Schwarz lemma type extremal results for logarithmic capacity under conformal mapping have been obtained by Burckel \emph{et al.}\ \cite{BMMPR08} and Betsakos and Pouliasis \cite{BP13}.

Another well known capacity conjecture by P\'{o}lya and Szeg\H{o} that still remains open is the claim that among convex sets in $3$-space with given surface area, the Newtonian capacity $\capone$ is minimal for the degenerate double-sided disk. Jerison's solution of the Minkowski problem for Newtonian capacity \cite{J96} might be relevant to this problem. Excellent partial results are due to Bucur \emph{et al.}\ \cite{BFL12}, I. Fragal\`{a} \emph{et al.}\ \cite{FGP11}, and Xiao \cite{LX25,X17}.

\section{\bf Newtonian and logarithmic energy and capacity: definitions and properties}
\label{sec:capacities}

For the following facts, see \cite[Lemma 4.1.3, Theorem 4.4.5, Theorem 4.4.8]{BHS19}.

Consider a compact nonempty set $K$ in a Euclidean space. The dimension of the space does not appear in the following definitions and so need not be specified. When $q > 0$, the Riesz $q$-energy of $K$ is
\[
V_q(K) = \min_\mu \int_K \! \int_K |x-y|^{-q} \, d\mu(x) d\mu(y) ,
\]
where the minimum is taken over all probability measures on $K$, that is, positive unit Borel measures. The minimum is attained by an ``equilibrium" measure. The energy is positive or $+\infty$, and if the energy is finite then the equilibrium measure is unique, in which case we call it the $q$-equilibrium measure of $K$. For the empty set we define $V_q(\emptyset)$ to equal $+\infty$.

When $q=0$ one considers instead the logarithmic energy
\[
V_{log}(K) = \min_\mu \int_K \! \int_K \log \frac{1}{|x-y|} \, d\mu(x) d\mu(y) ,
\]
with the minimum taken over probability measures on $K$. The minimum is attained by an equilibrium measure. The energy is greater than $-\infty$ since $|x-y|$ is bounded on $K \times K$. If the energy is less than $+\infty$ then the equilibrium measure is unique and is called the $0$-equilibrium measure. For the empty set, define $V_{log}(\emptyset)=+\infty$.

The Riesz capacity of $K$ is
\begin{equation*}
\capq(K) =
\begin{cases}
V_q(K)^{-1/q} , & q > 0 , \\
\exp(-V_{log}(K)) , & q = 0 .
\end{cases}
\end{equation*}
The $0$-capacity is known also as logarithmic capacity.

The Riesz capacity is positive if and only if the energy is finite. The definition ensures that capacity is monotonic with respect to set inclusion ($K_1 \subset K_2$ implies $\capq(K_1) \leq \capq(K_2)$) and that it scales linearly, with $\capq(sK)=s \capq(K)$ whenever $s>0$. Capacity is also monotonically decreasing with respect to the Riesz parameter $q$ (see \cite[Theorem 1.1]{CL25a}), although we will not need that fact.

In this paper we work with exponents $q=n-2$ and $q=n-1$, corresponding to Newtonian kernels for $\Rn$ and $\Rnp$ respectively. The logarithmic case arises when $n=2$ and $q=n-2=0$.

\section{\bf Strip energy and its derivative with respect to the thickness}
\label{sec:stripenergy}

Let us recall from our recent paper \cite{CL26} the definition of the Newtonian-type kernel with Neumann boundary conditions on the infinite strip
\[
S(t)=\Rn \times (-t,t) .
\]
Fix $n \geq 2$ and let $t>0$. For $j \in \Z$, the transformation
\begin{equation} \label{eq:rhodefn}
\rho_j(y,w)=(y,2tj+(-1)^j w) , \qquad y \in \Rn, \ w \in \R ,
\end{equation}
translates points vertically when $j$ is even and acts by reflection and translation when $j$ is odd, as illustrated in \autoref{fig:yhat}. Although the mapping $\rho_j$ depends on $t$, we suppress that dependence for notational simplicity. Notice $\rho_0(y,w)=(y,w)$ is the identity map.
\begin{definition*}[Strip kernel with Neumann boundary conditions]
First suppose $n \geq 3$. Define a kernel on the closure of the strip by the method of reflections, letting
\begin{equation} \label{eq:rieszkernelq}
G_t(\xh,\yh) = \sum_{j \in \Z} \frac{1}{| \xh-\rho_j(\yh) |^{n-1}}
\end{equation}
when $\xh=(x,z), \yh=(y,w) \in \overline{S(t)}$, that is, when $x,y \in \Rn$ and $z,w\in[-t,t]$. One checks easily that if $\xh \neq \yh$ then $\xh \neq \rho_j(\yh)$ for all $j$. Clearly the series for $G_t$ is greater than zero everywhere and converges locally uniformly wherever $\xh \neq \yh$, since $n-1 >1$.

In order to define the kernel when $n=2$, renormalize the formula by subtracting a summand that cancels the leading order divergent term: we define
\begin{equation}\label{eq:rieszkernelone}
G_t(\xh,\yh)
= \frac{1}{|\xh-\yh|}+ \sum_{j\neq 0}\left( \frac{1}{|\xh-\rho_j(\yh)|} -\frac{1}{|2tj|} \right) + \frac{1}{t}(\gamma-\log (4t))
\end{equation}
for $\xh, \yh \in \overline{S(t)}$, where $\gamma \simeq 0.577$
is the Euler--Mascheroni constant. The series converges locally uniformly where $\xh \neq \yh$, since $1/|\xh-\rho_j(\yh)|$ behaves like $1/|2tj| + O(1/j^2)$ for large $|j|$. Thus in particular, although we do not know that the kernel is positive when $n=2$, it is certainly bounded below provided $\xh$ and $\yh$ lie in some bounded subset of the strip.
\qed
\end{definition*}

\begin{figure}
\begin{center}
\begin{tikzpicture}[scale=0.8]
\def\t{0.8};
\def\xmin{-3.5};
\def\xmax{7};
\def\zmin{-5*\t-0.5};
\def\zmax{5*\t+0.5};
\def\y{3};
\def\w{\t/2};
\draw[dashed] (\xmin,\t) -- (\xmax,\t);
\draw[dashed] (\xmin,-\t) -- (\xmax,-\t);\
\draw[dashed] (\xmin,3*\t) -- (\xmax,3*\t);
\draw[dashed] (\xmin,5*\t) -- (\xmax,5*\t);
\draw[dashed] (\xmin,-3*\t) -- (\xmax,-3*\t);
\draw[dashed] (\xmin,-5*\t) -- (\xmax,-5*\t);
\draw[black,fill=black] (\y,\w) circle (.25ex) node[right]{$\rho_{0}(\yh)=\yh$};
\draw[black,fill=black] (\y,{2*\t*1+\w*(-1)^1}) circle (.25ex) node[right]{$\rho_{1}(\yh)$};
\draw[black,fill=black] (\y,{2*\t*2+\w*(-1)^2}) circle (.25ex) node[right]{$\rho_{2}(\yh)$};
\draw[black,fill=black] (\y,{2*\t*(-1)+\w*(-1)^(-1)}) circle (.25ex) node[right]{$\rho_{-1}(\yh)$};
\draw[black,fill=black] (\y,{2*\t*(-2)+\w*(-1)^(-2)}) circle (.25ex) node[right]{$\rho_{-2}(\yh)$};
\draw (-0.1,\t) node[left,fill=white] {$t$};
\draw (-0.1,-\t) node[left,fill=white] {$-t$};
\draw (-0.1,3*\t) node[left,fill=white] {$3t$};
\draw (-0.1,5*\t) node[left,fill=white] {$5t$};
\draw (-0.1,-3*\t) node[left,fill=white] {$-3t$};
\draw (-0.1,-5*\t) node[left,fill=white] {$-5t$};
\draw[-] (-0.15,\t) -- (0.15,\t);
\draw[-] (-0.15,-\t) -- (0.15,-\t);
\draw[-] (-0.15,3*\t) -- (0.15,3*\t);
\draw[-] (-0.15,-3*\t) -- (0.15,-3*\t);
\draw[-] (-0.15,5*\t) -- (0.15,5*\t);
\draw[-] (-0.15,-5*\t) -- (0.15,-5*\t);
\draw[->] (\xmin,0) -- (\xmax,0) node[right] {$x \in \Rn$};
\draw[->] (0,\zmin) -- (0,\zmax) node[above] {$z \in \R$};
\end{tikzpicture}
\end{center}
\caption{\label{fig:yhat} The points $\rho_j(\yh)$ are determined by repeated reflection across the strip boundaries at height $\pm t$.}
\end{figure}

The kernel is invariant under horizontal translations of the strip, and satisfies the Neumann boundary condition
\begin{equation} \label{eq:kernelneumann}
\left. \frac{\partial G_t}{\partial z} \right|_{z=\pm t} = 0
\end{equation}
on the upper and lower boundaries, by \cite[Proposition 4.1(b)]{CL26}.

\subsection*{Energy in Neumann strip, and equilibrium measures}
For $n \geq 2$, consider a nonempty compact set $K \subset \Rnp$. Supposing $t>0$ is large enough that $K \subset S(t)$, the strip energy of $K$ is defined to be
\[
E_K(t) = \min_\mu \int_K \! \int_K G_t(\xh,\yh) \, d\mu(\xh) d\mu(\yh) ,
\]
where the minimum is taken over all probability measures on $K$. (If $K$ is empty, define the energy to be $+\infty$.) Minimizing measures exist and are called equilibrium measures (see \cite[Lemma 4.1.3]{BHS19}). If the strip energy is finite and $K \subset \Rn$ then the equilibrium measure is unique, by \cite[Lemma 2.1]{CL26} applied with $q=n-1$. Whether or not it is unique, we denote an equilibrium measure by $\mu_t$.

In the limiting case $t \to \infty$, the strip fills all of $\Rnp$ and so it makes sense to define
\[
G_\infty(\xh,\yh)=\frac{1}{|\xh-\yh|^{n-1}}
\]
to be the $(n-1)$-Riesz kernel. With this definition,
\[
E_K(\infty)=V_{n-1}(K)
\]
is the $(n-1)$-Riesz energy.

The energy $E_K(t)$ is greater than $0$ when $n \geq 3$, and when $n=2$ one has $E_K(t)> -\infty$. Further, the energy is unchanged under horizontal translations of $K$, that is, under translations that preserve the $z$-coordinate.

\subsection*{Derivative formula with respect to strip thickness}

The following formula for the derivative of the strip energy $E_K(t)$, when $K$ is contained in the central hyperplane $\Rn$, is one of the keys to our main theorem \autoref{th:main}. It provides a formula for the rate of change of the energy with respect to variations of the boundary. Define a constant
\[
a_n = \frac{1}{(n-1)|\Sphn|} .
\]
\begin{theorem}[Strip energy derivative] \label{EKderivDirichlet}
Let $n \geq 2$. If $K \subset \Rn$ is compact and the energy $E_K(t)$ is finite for some $t>0$, then $E_K(t)$ is continuously differentiable for all $t>0$, with
\[
\big( t E_K(t) \big)^\prime = 2 a_n \int_{S(t)} \! \left( \frac{\partial U_t}{\partial z} \right)^{\!\! 2} \, dx dz
\]
where $U_t(\xh)= \int_K G_t(\xh,\yh) \, d\mu_t(\yh)$ is the unique equilibrium potential of $K$ on the strip $S(t)$.
\end{theorem}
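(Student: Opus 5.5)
The plan is to rescale to a fixed strip and then differentiate a minimum energy, Hadamard style. For $\xh=(x,z)$ with $x,y\in\Rn$ (so $K$ lies in the central hyperplane), the reflection points are $\rho_j(y,0)=(y,2tj)$. Hence for $n\ge3$
\[
G_t((x,0),(y,0)) = \sum_{j\in\Z} \bigl(|x-y|^2+4t^2j^2\bigr)^{-(n-1)/2},
\]
and the $n=2$ version has the subtracted constants and the term $\frac1t(\gamma-\log 4t)$. So on $K\times K$ the kernel is an explicit smooth function of $t$ for $x\ne y$. Because $G_t$ is increasing in neither direction uniformly, I would first get one-sided derivative bounds from the minimality of $E_K$.

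\textbf{Envelope step.} For $s$ near $t$, test $E_K(s)$ with $\mu_t$, and test $E_K(t)$ with $\mu_s$. This gives
\[
\iint (G_s-G_t)\,d\mu_s\,d\mu_s \le E_K(s)-E_K(t)\le \iint (G_s-G_t)\,d\mu_t\,d\mu_t .
\]
I will show that $\partial_t G_t$ is bounded on $K\times K$ locally uniformly in $t$. Termwise, $\partial_t (r^2+4t^2j^2)^{-(n-1)/2}$ is $O(j^{2}t/(r^2+4t^2j^2)^{(n+1)/2})$, the $j=0$ term is independent of $t$, and the sum is summable uniformly in $r$. That bound makes $E_K$ Lipschitz. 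Uniqueness of $\mu_t$ plus weak-$*$ compactness then gives $\mu_s\to\mu_t$. Since $\partial_tG$ is continuous on $K\times K$ (including the diagonal, as the $j=0$ term drops out), both bounds converge, and we get
\[
E_K'(t)=\iint_{K\times K}\partial_t G_t(x,y)\,d\mu_t\,d\mu_t ,
\]
continuous in $t$. Finiteness for one $t$ implies finiteness for all $t$, because $G_t-G_s$ is bounded on $K\times K$.

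\textbf{Identification with the Dirichlet integral.} It remains to show that $\iint \partial_t(tG_t)\,d\mu_t\,d\mu_t = 2a_n\int_{S(t)}(\partial_zU_t)^2$. I would rescale to the unit strip, $\hat\xi\mapsto(x/t,z/t)$. By the homogeneity $G_t(\xh,\yh)=t^{-(n-1)}G_1(\xh/t,\yh/t)$ (with a log correction when $n=2$), $tE_K(t)$ becomes a strip energy of the dilated set $K/t$ in $S(1)$, up to a power of $t$. Varying $t$ therefore amounts to a radial horizontal dilation of the set. Equivalently, I compute directly: $t\,\partial_t G_t+G_t$ is expressed through $x\cdot\nabla_x$ and $z\partial_z$ acting on the kernel via Euler's identity, since each summand is homogeneous of degree $-(n-1)$ in $(x-y,\,2tj)$. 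Then I apply a Rellich–Pohozaev identity to $U_t$ in $S(t)\setminus K$ with the vector field $(0,z)$, i.e. I integrate $\operatorname{div}\bigl(z\,\partial_zU\,\nabla U-\tfrac12 z|\nabla U|^2 e_z\bigr)=(\partial_zU)^2-\tfrac12|\nabla U|^2$. The boundary terms at $z=\pm t$ produce $t\int_{\partial S}|\nabla_xU|^2$, which matches the $t$-variation of the kernel. Combining this with the identity $\int_{S(t)}|\nabla U_t|^2=\frac{1}{a_n}\cdot\tfrac12 E_K(t)$-type (Green's formula with flux normalization $|\mathbb S^n|(n-1)$, hence the constant $a_n$) yields the formula. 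An alternative that avoids Pohozaev is to write $\partial_t(tG_t)$ as $\int_{S(t)}\partial_zG_t(\cdot,\xh)\,\partial_zG_t(\cdot,\yh)$ times $2a_n$ via Fourier transform in $x$. There each mode gives a Neumann problem on $(-t,t)$ solvable by $\cosh$, and one checks the identity mode by mode, then integrates against $\mu_t\otimes\mu_t$. I would try this Fourier route first, since it is exact and handles the reflections cleanly.

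\textbf{Main obstacle.} The hard case is $n=2$. There $U_t$ grows like $\log(1/r)$, $\nabla U_t$ decays only like $1/r$, and the Dirichlet integral $\int|\nabla U_t|^2$ diverges; only the transverse part $\partial_zU_t$ decays exponentially. So every Green or Pohozaev identity must be done on truncated cylinders $\{|x|<R\}$, with the lateral boundary terms tracked using the stated asymptotics. The renormalizing term $\frac1t(\gamma-\log 4t)$ must be shown to cancel exactly the divergent lateral contributions, which is presumably why the formula is stated for $(tE_K)'$. In the Fourier approach the same issue appears as the singular zero mode $\xi=0$. I expect this cancellation to be the step requiring the most care.
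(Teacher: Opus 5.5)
Your proposal is correct, and its key step takes a genuinely different route from the paper.

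\textbf{The envelope step.} Your Danskin argument is essentially the differentiability result the paper imports as [CL26, Theorem 3.4]. It rests on the same uniqueness of $\mu_t$ for $K\subset\Rn$. For $n=2$ your termwise bound must use the subtracted constants: only then is the $t$-derivative of the $j$-th summand $O(r^2/|j|^3)$.

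\textbf{The paper's identification step.} The paper explicitly remarks that it found no productive way to use $\iint\partial_tG_t\,d\mu_t\,d\mu_t$. Instead it proves a Dirichlet-integral representation of the strip energy and a Dirichlet principle for $1/E_K(t)$. It then inserts the vertically rescaled trial function $v_t(x,z)=u_\tau(x,\tau z/t)$. This yields one-sided difference-quotient inequalities, which close up only because differentiability is already known. For $n=2$ everything is redone on truncated cylinders with the renormalized trial functions $(tU_t+\log R)/(tE_K(t)+\log R)$. The whole argument leans on the Frostman--Fuglede theory and the exceptional sets $Z_t$.

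\textbf{Your Fourier identity holds.} Let $s=|\xi|$. The $x$-transform of $G_t((\cdot,z),(0,0))$ is $F(\xi,z)=\cosh(s(t-|z|))/(2a_ns\sinh(st))$, so the energy multiplier is $F(\xi,0)=\coth(st)/(2a_ns)$. Both $\partial_t\bigl(tF(\xi,0)\bigr)$ and $2a_n\int_{-t}^{t}(\partial_zF)^2\,dz$ equal $\frac{1}{2a_n}\bigl(\coth(st)/s-t/\sinh^2(st)\bigr)$. Hence $\frac{d}{dt}\bigl(tE_\mu(t)\bigr)=2a_n\int_{S(t)}(\partial_zU_\mu)^2$ for every finite-energy probability measure $\mu$ on $\Rn$, not only for $\mu_t$. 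Combined with your envelope formula, this proves the theorem with no variational principle and no exceptional-set analysis.

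\textbf{Comparison.} Your route gives a shorter, exact and stronger statement, and it supplies the productive use of the kernel-derivative formula that the paper could not find. The paper's route uses only energy and Dirichlet-integral structure, not the explicit translation-invariant spectral form of $G_t$.

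\textbf{The case $n=2$.} The difficulty is milder than you fear. The multiplier of $\partial_t(tG_t)$ is bounded near $\xi=0$; it tends to $t/(3a_n)$. So the zero mode produces no divergence, and the only thing to rule out is an additive constant. That follows because $tG_t((x,0),(0,0))=-\log|x|+O(1/|x|)$ with no $t$-dependent constant, so $tG_t-\tau G_\tau\to0$ at infinity. This is exactly what the $-\frac1t\log t$ in the renormalization arranges.

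\textbf{A rigor point.} Apply Plancherel to $\partial_zU_\mu(\cdot,z)$ for each $z\neq0$ and then Tonelli in $z$. This is cleaner than integrating a kernel-level identity against $\mu\otimes\mu$.

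\textbf{Discard the Pohozaev sketch.} It is not a proof as written, for three reasons:
\begin{enumerate}
\item The identity you quote has the wrong constant: $a_n\int_{S(t)}|\nabla U_t|^2=E_K(t)$ for $n\geq3$, with no factor $\frac12$.
\item The claimed match of $t\int_{\partial S(t)}|\nabla_xU_t|^2$ with the $t$-variation of the kernel is itself a Hadamard-type formula that needs proof.
\item Integrating by parts up to an arbitrary compact $K$ is not justified without regularity of $U_t$ near $K$.
\end{enumerate}
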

\autoref{sec:dirichlet3} and \autoref{sec:dirichlet2} contain the proof.

\section{\texorpdfstring{\bf Proof of \autoref{EKderivDirichlet} for $n \geq 3$}{Proof of the strip energy derivative for n at least 3}}
\label{sec:dirichlet3}

The potential of a probability measure $\mu$ on a compact set $K \subset S(t)$ is
\[
U(\xh)= \int_K G_t(\xh,\yh) \, d\mu(\yh) , \qquad \xh \in \overline{S(t)} .
\]
Thanks to the kernel properties in the previous section, and in particular the Neumann condition \eqref{eq:kernelneumann}, the potential is smooth on $\overline{S(t)} \setminus K$ and satisfies the Neumann boundary condition
\[
\frac{\partial U}{\partial \nu} = 0
\]
on the upper and lower boundaries where $z = \pm t$. Further, the potential is superharmonic on $S(t)$ and is harmonic away from $K$, since the Newtonian kernel $\xh \mapsto 1/|\xh-\yh|^{n-1}$ is superharmonic on $\Rnp$ and is harmonic away from the singularity at $\yh$.

\subsection{Preliminary estimates on potentials}
Let
\[
c_m = \frac{\Gamma(\frac{1}{2}) \Gamma(\frac{m}{2})}{2\Gamma(\frac{m+1}{2})} , \qquad m>0 .
\]
Potentials in the $(n+1)$-dimensional strip decay like $r^{-(n-2)}$, which agrees with the Newtonian rate in the lower dimensional hyperplane $\Rn$.
\begin{proposition}[Potential decay at spatial infinity] \label{pr:potentialdecay} Suppose $K$ is a nonempty compact subset of the strip $S(t)\subset \Rnp$, where $n\geq 2$ and $t>0$ are fixed. Let $\mu$ be a probability measure on $K$ and write $U$ for the potential of $\mu$, as above.

Then as $r=|x| \to \infty$, the potential on $\overline{S(t)}$ decays according to
\[
U(x,z) =
\begin{cases}
\dfrac{c_{n-2}}{t} \dfrac{1}{r^{n-2}} + O \! \left( \dfrac{1}{r^{n-1}} \right) & \text{when $n \geq 3$,} \\[1em]
 \dfrac{1}{t} \log \dfrac{1}{r} + O \! \left( \dfrac{1}{r} \right) & \text{when $n=2$,}
\end{cases}
\]
and its gradient decays according to
\[
\nabla U(x,z) = - \frac{d_n}{tr^{n-1}} \, (x/r,0) + O\!\left( \frac{1}{r^n}\right) ,
\]
where $d_2=1$ and $d_n=(n-2)c_{n-2}$ when $n \geq 3$, and where the remainder estimates are uniform with respect to $z \in [-t,t]$.
\end{proposition}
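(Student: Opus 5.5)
The plan is to reduce to the asymptotics of the kernel $G_t(\xh,\yh)$ itself, uniformly for $\yh$ in the compact set $K$, and then integrate against $\mu$. Write $\xh=(x,z)$ and $\yh=(y,w)$. Since $K$ is bounded, $|x-y| = r + O(1)$ uniformly for $y$ in the projection of $K$, and $x-y = x + O(1)$. Hence it suffices to show, uniformly in $z,w \in [-t,t]$, that
\[
G_t(\xh,\yh) = \frac{c_{n-2}}{t}\frac{1}{|x-y|^{n-2}} + O\!\left(\frac{1}{|x-y|^{n-1}}\right)
\]
when $n\ge 3$, with the analogous statement with $\frac1t\log\frac{1}{|x-y|}$ when $n=2$. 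We also need the matching gradient statement, in which the leading term is $-\frac{d_n}{t}\frac{x-y}{|x-y|^n}$ and the $z$-component is $O(|x-y|^{-n})$. After that, replacing $|x-y|$ by $r$ and $x-y$ by $x$ costs only one further power of $1/r$. Integrating over the probability measure $\mu$ then gives the proposition.

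For the kernel asymptotics I will use Poisson summation in the vertical variable. Set $\rho=|x-y|$. The images $\rho_j(\yh)$ split into even $j$, at heights $w+4tk$, and odd $j$, at heights $-w+2t+4tk$. So $G_t$ is the sum of two lattice sums of the form $\sum_k f(z-w_0-4tk)$, where $f(s)=(\rho^2+s^2)^{-(n-1)/2}$. By Poisson summation, each such sum equals $\frac{1}{4t}\int_\R f\,ds$ plus oscillatory terms involving $\hat f(\pi m/(2t))$ for $m\neq0$. The zero mode is explicit: $\int_\R(\rho^2+s^2)^{-(n-1)/2}ds = 2c_{n-2}\rho^{-(n-2)}$. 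Adding the two lattices gives $\frac{c_{n-2}}{t}\rho^{-(n-2)}$, as required.

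The nonzero modes are Bessel-$K$ functions of argument proportional to $\rho$, namely $\hat f(\xi)\propto (|\xi|/\rho)^{(n-2)/2}K_{(n-2)/2}(\rho|\xi|)$. They decay exponentially in $\rho$, uniformly in $z,w$, so they are certainly $O(\rho^{-(n-1)})$. The same representation can be differentiated term by term. The $x$-derivative of the zero mode gives $-(n-2)c_{n-2}\,t^{-1}(x-y)\rho^{-n}$, which matches $d_n=(n-2)c_{n-2}$. The $z$-derivative kills the zero mode entirely and leaves only exponentially small terms.

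The case $n=2$ is the main obstacle, because $f(s)=(\rho^2+s^2)^{-1/2}$ is not integrable and the kernel carries the renormalization in \eqref{eq:rieszkernelone}. Here I would work with the $x$-gradient first. Differentiating in $x$ termwise gives the absolutely convergent sum $-\sum_j (x-y)/|\xh-\rho_j(\yh)|^3$, and the constants drop out. Poisson summation then yields $-\frac{1}{t}(x-y)/\rho^2$ plus exponentially small terms, using $\int_\R \rho(\rho^2+s^2)^{-3/2}ds=2/\rho$. The $z$-derivative is handled as before.

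To recover $G_t$ itself, I integrate the gradient radially in $x$. This shows that $G_t(\xh,\yh)-\frac1t\log\frac1\rho$ converges as $\rho\to\infty$ to some limit $C(z,w)$, with an error that is exponentially small. It remains to show that $C\equiv 0$. This should be exactly what the constant $\frac1t(\gamma-\log(4t))$ was designed for. I would confirm it by evaluating the renormalized lattice sum using the classical expansion $\sum_{k\ne0}\bigl((\rho^2+a_k^2)^{-1/2}-|a_k|^{-1}\bigr)$, in the Bessel $K_0$ form $\sum_m K_0$ with the $\gamma$ and $\log$ terms. Alternatively I would cite the corresponding computation in \cite{CL26}, where the normalization was chosen. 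The uniformity in $z$ is automatic, since all Poisson remainders are bounded by $e^{-c\rho}$ with $c$ depending only on $t$.
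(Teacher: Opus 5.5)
Your proposal is correct, and its outer layer is the same as the paper's. Both obtain asymptotics of $G_t(\xh,\yh)$ uniformly for $\yh\in K$ and $z\in[-t,t]$, integrate against the probability measure $\mu$, and absorb the passage from $|x-y|$ to $r=|x|$ into the remainder.

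The difference is that the paper does not derive the kernel asymptotics. It quotes \cite[Corollary 5.2]{CL26}, stated there for a general Riesz exponent $q$ and used with $q=n-1$. It then invokes $(n-1)c_n=(n-2)c_{n-2}$ to convert the gradient constant $qc_{q+1}$ into $d_n$.

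Your Poisson summation argument makes the proposition self-contained and gives a sharper intermediate result: $G_t$ and its gradient differ from their leading terms by $O(e^{-c\rho})$, uniformly in $z,w$. So the only algebraic error comes from replacing $|x-y|$ by $r$. Differentiating the zero mode $\frac{c_{n-2}}{t}\rho^{-(n-2)}$ directly also yields $d_n=(n-2)c_{n-2}$ without the Gamma identity.

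The cost is the normalization check for $n=2$, which you stated but did not carry out. It does hold, as follows.
\begin{itemize}
\item The Poisson expansions of $\partial_z G_t$ and $\partial_w G_t$ have vanishing zero mode, so both are exponentially small. Hence your limit $C(z,w)$ is a single constant, and you may evaluate it at $z=w=0$, where the images lie at heights $2tj$.
\item Apply the classical identity
\[
\sum_{j\geq 1}\Bigl(\frac{1}{\sqrt{j^2+s^2}}-\frac{1}{j}\Bigr)=\log\frac{2}{s}-\gamma-\frac{1}{2s}+2\sum_{m\geq 1}K_0(2\pi m s)
\]
with $s=\rho/(2t)$. This gives
\[
G_t=\frac{1}{t}\log\frac{1}{\rho}+\frac{2}{t}\sum_{m\geq 1}K_0(\pi m\rho/t),
\]
because the terms $\pm 1/\rho$, $\pm\gamma/t$ and $\pm\frac{1}{t}\log(4t)$ cancel exactly.
\end{itemize}
So $C=0$, confirming that $\frac{1}{t}(\gamma-\log(4t))$ is precisely the constant that removes the additive term.
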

\begin{proof}
The proposition follows by integrating with respect to $d\mu(\yh)$ the kernel estimates from \cite[Corollary 5.2]{CL26}, for the choice $q=n-1$. Here one uses the alternative form of the constant $qc_{q+1}$ that appears in that corollary, as stated after \cite[Proposition 5.1]{CL26}; or else directly use the functional equation for the gamma function to show $(n-1)c_n = (n-2)c_{n-2}$.
\end{proof}

\subsection{Dirichlet integrals for the strip energy} \label{sec:Dirichletintegrals}
Assume $n \geq 3$ throughout the remainder of \autoref{sec:dirichlet3}.

Fix $t>0$. Suppose $K \subset \Rnp$ is a compact subset of the strip $S(t)$ and $\mu$ is a probability measure on $K$. (We do not assume $\mu$ is an equilibrium measure for $K$.) Define the energy of $\mu$ with respect to the kernel $G_t$ to be
\[
E_\mu(t)=\int_K \! \int_K G_t(\xh,\yh) \, d\mu d\mu
\]
and assume $E_\mu(t)<\infty$. The immediate goal is to show in \autoref{StripDirichlet} that $E_\mu(t)$ equals the Dirichlet integral of the potential $U$ of $\mu$.

The potential can be written as
\[
U(\xh) =\int_K G_t(\xh,\yh) \, d\mu(\yh) = N(\xh)+H(\xh) , \qquad \xh=(x,z) \in \overline{S(t)} ,
\]
where we have decomposed the kernel \eqref{eq:rieszkernelq} into its Newtonian part and remainder as
\[
G_t(\xh,\yh)=\frac{1}{|\xh-\yh|^{n-1}}+H_t(\xh,\yh)
\]
and then decomposed the potential correspondingly in terms of
\[
N(\xh)=\int_K \frac{1}{|\xh-\yh|^{n-1}} \, d\mu(\yh) , \qquad H(\xh) = \int_K H_t(\xh,\yh) \, d\mu(\yh) .
\]
Thus $N$ is the Newtonian potential of $\mu$, which is smooth away from $K$ and decays according to
\[
N(\xh)=O(1/|x|^{n-1}) , \qquad \nabla N(\xh)=O(1/|x|^n) ,
\]
as $|x| \to \infty$. (In fact, $N$ decays on $\Rnp$ according to $N(\xh)=O(1/|\xh|^{n-1})$ and $\nabla N(\xh)=O(1/|\xh|^n)$.) The potential $U$ decays one order slower than $N$ as $|x| \to \infty$, since
\begin{equation} \label{Udecay}
U(\xh) = O(1/|x|^{n-2}) , \qquad \nabla U(\xh) = O(1/|x|^{n-1}) ,
\end{equation}
by \autoref{pr:potentialdecay}. Hence the remainder $H=U-N$, which is smooth and harmonic on the closure of $S(t)$, decays as
\[
H(\xh) = O(1/|x|^{n-2}) , \qquad \nabla H(\xh) = O(1/|x|^{n-1}) .
\]
In each case, the decay is uniform with respect to $z \in [-t,t]$.

Since $n \geq 3$, we deduce from these estimates that $\nabla H \in L^2(S(t))$, and that $\nabla U$ and $\nabla N$ are square integrable on $S(t)$ except perhaps on a neighborhood of $K$.

The next lemma states that the Newtonian energy of a measure equals the Dirichlet integral of its Newtonian potential, and so in fact $\nabla N$ is square integrable on all of $S(t)$. The lemma holds unchanged also when $n=2$. Recall that the Newtonian potential satisfies
\[
- a_n \Delta N = d\mu
\]
in the distributional sense, since the fundamental solution for the Laplacian in $\Rnp$ is $a_n/|\xh|^{n-1}$.
\begin{lemma}[Dirichlet integral for Newtonian energy] \label{NewtonianDirichlet}
The Newtonian potential $N$ is locally integrable and has weak gradient $\nabla N \in L^2(\Rnp)$, with
\[
a_n \int_\Rnp |\nabla N|^2 \, d\xh = \int_K \! \int_K \frac{1}{|\xh-\yh|^{n-1}} \, d\mu(\xh) d\mu(\yh) < \infty .
\]
\end{lemma}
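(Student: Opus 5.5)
The plan is to pass to Fourier space, where the Newtonian kernel is diagonal. The energy is assumed finite; in the $n=2$ case it is finite because the energy is taken with respect to the kernel $1/|\xh-\yh|$. Write $\kappa(\xh)=|\xh|^{-(n-1)}$ on $\Rnp$, so that $N=\kappa*\mu$. The classical identity $\widehat{\kappa}(\xi)=C_n|\xi|^{-2}$ holds with $C_n$ chosen so that $-a_n\Delta\kappa=\delta$, that is, $a_nC_n=1$ up to the normalization of the Fourier transform. Consequently
\[
\widehat{N}(\xi)=C_n|\xi|^{-2}\widehat{\mu}(\xi), \qquad \widehat{\nabla N}(\xi)=i\xi\, C_n|\xi|^{-2}\widehat\mu(\xi).
\]
Formally, Plancherel then gives
\[
a_n\int|\nabla N|^2=a_nC_n^2\int|\xi|^{-2}|\widehat\mu|^2\,d\xi=C_n\int|\xi|^{-2}|\widehat\mu|^2\,d\xi,
\]
and by the standard Fourier representation of Riesz energy (valid for positive measures with $0<n-1<n+1$) the last integral equals $\iint \kappa(\xh-\yh)\,d\mu\,d\mu$. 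Local integrability of $N$ is immediate, since Tonelli gives $\int_{B_R}N\le\sup_{\yh}\int_{B_R}\kappa(\xh-\yh)\,d\xh<\infty$.

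The main obstacle is making this rigorous when $\mu$ is a general measure with finite energy, for which $\widehat\mu$ is only bounded. I would regularize by mollifying. Let $\mu_\e=\mu*\phi_\e$ with $\phi_\e$ a radial mollifier, and set $N_\e=\kappa*\mu_\e=N*\phi_\e$. Then $N_\e$ is smooth, $N_\e=O(|\xh|^{1-n})$, and $\nabla N_\e=O(|\xh|^{-n})$, so $\nabla N_\e\in L^2$ at infinity because $2n>n+1$.

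For the regularized potential, Green's formula on a large ball $B_R$, combined with $-a_n\Delta N_\e=\mu_\e$, gives
\[
a_n\int_{B_R}|\nabla N_\e|^2=\int_{B_R} N_\e\,d\mu_\e+a_n\int_{\partial B_R}N_\e\,\partial_\nu N_\e .
\]
The boundary term is $O(R^{1-n}R^{-n}R^{n})\to0$. Therefore
\[
a_n\|\nabla N_\e\|_2^2=\iint \kappa*\phi_\e*\phi_\e(\xh-\yh)\,d\mu\,d\mu .
\]
Since $\kappa$ is superharmonic, $\kappa*\phi_\e*\phi_\e\le\kappa$ and it increases to $\kappa$ pointwise as $\e\to0$. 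By monotone convergence, $a_n\|\nabla N_\e\|_2^2$ converges to the energy of $\mu$ and stays bounded.

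Next I would identify the limit of the gradients. The same computation applied to $\nabla N_\e-\nabla N_\delta$ gives
\[
a_n\|\nabla N_\e-\nabla N_\delta\|_2^2=\iint\kappa*(\phi_\e-\phi_\delta)*(\phi_\e-\phi_\delta)\,d\mu\,d\mu .
\]
This equals $e_{\e\e}+e_{\delta\delta}-2e_{\e\delta}$, where each term converges to the energy of $\mu$ by the same monotonicity argument, using that $\kappa*\phi_\e*\phi_\delta\le\kappa$ increases as both $\e,\delta\to0$. Hence $\nabla N_\e$ is Cauchy in $L^2$.

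Finally, $N_\e\to N$ in $L^1_{loc}$, since $N*\phi_\e\to N$ for a locally integrable $N$, so the $L^2$ limit of $\nabla N_\e$ is the weak gradient of $N$. Taking the limit in the norm identity yields the lemma.
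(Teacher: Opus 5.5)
Your proof is correct, but it takes a different route from the paper. The paper does not prove the lemma directly. It cites Landkof's Lemma~1.6 for weak differentiability of $N$ and his Theorem~1.20 for the energy identity. Beyond noting local integrability, its only work is converting Landkof's normalization into the constant $a_n=A(n+1,2)/4\pi^2$.

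You instead give a self-contained argument in three steps:
\begin{itemize}
\item Green's formula on $B_R$ for the smooth potentials $N_\e=\kappa*\mu_\e$, with the boundary term killed by the $O(R^{1-n})$ decay.
\item The super-mean-value property of $\kappa$, which shows that the smoothed kernels lie below $\kappa$ and increase to it. Monotone convergence then gives $a_n\|\nabla N_\e\|_2^2\to\iint\kappa(\xh-\yh)\,d\mu\,d\mu$.
\item The cross terms, which show $\nabla N_\e$ is Cauchy in $L^2$. The limit is identified as the weak gradient via $N_\e\to N$ in $L^1_{loc}$.
\end{itemize}
For the joint limit in $\e,\delta$, you should say why ``increases as both $\e,\delta\to0$'' holds. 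Since $\kappa*\phi_\e$ is itself superharmonic, $\kappa*\phi_\e*\phi_\delta$ is monotone in each parameter separately. Hence it is squeezed between $\kappa*\phi_m*\phi_m$ and $\kappa$, where $m=\max(\e,\delta)$.

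Your route buys independence from Landkof's normalizations and framework, since $a_n$ enters only through $-a_n\Delta\kappa=\delta$. It also exhibits $\nabla N$ explicitly as an $L^2$ limit of smooth gradients, in the same spirit as the Green's formula and mollification arguments the paper uses in Propositions~\ref{StripDirichlet} and~\ref{Characterization}. The paper's route buys brevity. Your opening Fourier computation is never used and can be dropped.

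One small point: finiteness of the Newtonian energy is an input to your Cauchy step rather than an output. In the paper's setting it follows from $E_\mu(t)<\infty$, because $H_t=G_t-|\xh-\yh|^{1-n}$ is bounded on $K\times K$ when $K$ is a compact subset of the open strip. This holds for every $n\geq2$, including the renormalized kernel when $n=2$. That is what your remark about the case $n=2$ should say.
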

\begin{proof}
The right side equals $\int_\Rnp N(\xh) \, d\mu(\xh)$. Formally, one may substitute the distributional Laplacian $d\mu=-a_n \Delta N$ and apply Green's theorem to obtain the left side, using that the Newtonian potential $N$ and its gradient vanish sufficiently fast at infinity to deduce that no boundary terms survive.

A rigorous proof is in Landkof \cite{L72}, as we now detail. Local integrability of $N$ on $\Rnp$ is clear since the singularity in the kernel is only of the form $|\xh|^{-(n-1)}$. Weak differentiability follows from \cite[Lemma 1.6]{L72}. That the weak gradient $\nabla N$ is square integrable and satisfies the equation in \autoref{NewtonianDirichlet} is in \cite[Theorem 1.20]{L72}.

\emph{Note.} Landkof's normalizations differ from ours. His Newtonian kernel and hence energy and potential equal $A(n+1,2)$ times our corresponding quantities, where $A(n+1,2)=\pi^{2-(n+1)/2} \Gamma(\frac{n-1}{2})$; see \cite[{\S}1.1]{L72}. The formula in \autoref{NewtonianDirichlet} is equivalent to his Theorem 1.20 because $a_n=A(n+1,2)/4\pi^2$.
\end{proof}

The next proposition and corollary derive a Dirichlet integral representation for the energy $E_\mu(t)$.
\begin{proposition}[Dirichlet integral for strip energy when $n \geq 3$] \label{StripDirichlet}
The potential $U(\xh)= \int_K G_t(\xh,\yh) \, d\mu(\yh)$ has weak derivatives in $L^2(S(t))$, with
\[
a_n \int_{S(t)} |\nabla U|^2 \, d\xh = \int_K \! \int_K G_t(\xh,\yh) \, d\mu(\xh) d\mu(\yh) = E_\mu(t) < \infty.
\]
\end{proposition}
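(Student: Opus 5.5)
We use the splitting $U=N+H$ with $H=\int_K H_t(\cdot,\yh)\,d\mu(\yh)$. By \autoref{NewtonianDirichlet}, $\nabla N\in L^2(\Rnp)$, and by the decay estimates above $\nabla H\in L^2(S(t))$; $H$ is smooth and harmonic on $\overline{S(t)}$, since each image point $\rho_j(\yh)$, $j\neq 0$, lies outside $\overline{S(t)}$ when $\yh\in K\subset S(t)$. Hence $\nabla U=\nabla N+\nabla H\in L^2(S(t))$. Expanding,
\[
a_n\int_{S(t)}|\nabla U|^2
= a_n\int_{S(t)}|\nabla N|^2 + 2a_n\int_{S(t)}\nabla N\cdot\nabla H + a_n\int_{S(t)}|\nabla H|^2 .
\]
The target is $\int\!\int \frac{d\mu\,d\mu}{|\xh-\yh|^{n-1}}+\int\!\int H_t\,d\mu\,d\mu=\int N\,d\mu+\int H\,d\mu$. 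The Newtonian term equals $\int N\,d\mu$ only after integrating over all of $\Rnp$. We therefore avoid splitting integrals over $S(t)$ versus its complement and instead reorganize using the reflection structure.

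The cleanest route is to truncate. Take the bounded cylinder $\Omega_R=\{|x|<R\}\times(-t,t)$ and mollify $\mu$ to smooth densities $\mu_\e$ supported in a compact subset of $S(t)$ (shrinking slightly if needed), so that $U_\e$ is smooth with $-a_n\Delta U_\e=\mu_\e$ in $S(t)$. Green's identity on $\Omega_R$ gives
\[
a_n\int_{\Omega_R}|\nabla U_\e|^2 = \int U_\e\,d\mu_\e + a_n\int_{\partial\Omega_R}U_\e\,\partial_\nu U_\e .
\]
The top and bottom faces contribute nothing by the Neumann condition \eqref{eq:kernelneumann}. On the lateral side $\{|x|=R\}$, the bounds $U=O(R^{2-n})$, $\nabla U=O(R^{1-n})$, uniformly in $z$ by \autoref{pr:potentialdecay}, give a term of size $R^{n-1}\cdot 2t\cdot R^{2-n}R^{1-n}=O(R^{2-n})\to0$, using $n\ge3$. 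Letting $R\to\infty$ yields $a_n\int_{S(t)}|\nabla U_\e|^2=E_{\mu_\e}(t)$ for smooth measures. These decay estimates are uniform in $\e$, because the support stays in a fixed compact set.

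It remains to pass $\e\to0$; this is the main obstacle, since $\mu$ has finite energy but may be singular. For the energy side, $E_{\mu_\e}(t)\to E_\mu(t)$. Splitting $G_t=|\cdot|^{-(n-1)}+H_t$, the $H_t$ part converges because $H_t$ is continuous on $K'\times K'$ for a compact neighbourhood $K'$. The Newtonian part converges by the standard fact that mollification does not increase Riesz energy and converges to it; equivalently, use the Fourier representation of the Newtonian energy as $\int|\hat\mu|^2|\xi|^{-2}$, for which mollification gives monotone convergence. For the gradient side, \autoref{NewtonianDirichlet} applied to $\mu_\e-\mu$ gives $\nabla N_\e\to\nabla N$ in $L^2(\Rnp)$, because the Newtonian energy of $\mu_\e-\mu$ tends to $0$ by the same Fourier argument. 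Separately, $\nabla H_\e\to\nabla H$ in $L^2(S(t))$: convergence is locally uniform, and the tails are uniformly small by the uniform $O(|x|^{1-n})$ bound, which is square integrable when $n\ge3$. Thus $\nabla U_\e\to\nabla U$ in $L^2(S(t))$, the weak gradient of $U$ is identified with $\nabla N+\nabla H$, and the identity passes to the limit, giving $a_n\int_{S(t)}|\nabla U|^2=E_\mu(t)<\infty$.
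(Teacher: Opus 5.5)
Your proof is correct, but it takes a genuinely different route from the paper's.

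\emph{The paper's route.} The paper never regularizes $\mu$. It proceeds in four moves.
\begin{enumerate}
\item It writes $E_\mu(t)=a_n\int_{\Rnp}|\nabla N|^2\,d\xh+\int_K H\,d\mu$, using \autoref{NewtonianDirichlet} as a black box.
\item It converts the part of the Newtonian Dirichlet integral lying outside the strip into the boundary term $-\int_{\partial S(t)}N\,\partial_\nu N$, by Green's theorem where $N$ is harmonic.
\item It writes $\int_K H\,d\mu=a_n\int_{S(t)}\nabla U\cdot\nabla H\,d\xh$, using $-a_n\Delta U=\mu$ in the strip and the Neumann condition for $U$.
\item It combines these via \eqref{identity}. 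The leftover terms $-\int_{\partial S(t)}N\,\partial_\nu N-\int_{S(t)}\nabla N\cdot\nabla H$ cancel: the Neumann condition gives $\partial_\nu H=-\partial_\nu N$ on $\partial S(t)$, so Green's theorem turns them into $a_n\int_{S(t)}N\,\Delta H=0$.
\end{enumerate}
So the inside/outside splitting you deliberately avoided is exactly what the paper exploits. It costs one lemma plus Green's theorem for the smooth function $H$ and for $N$ off $K$, and the singular analysis is essentially confined to \autoref{NewtonianDirichlet} (that is, to Landkof).

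\emph{Your route.} You make every Green's identity classical: $U_\e$ is smooth, the Neumann condition kills the top and bottom faces, and the lateral term is the same $O(R^{2-n})$ as in the paper. The price is the limit $\e\to0$. It requires continuity of the Newtonian energy under mollification and $L^2$ convergence of $\nabla N_\e$ and $\nabla H_\e$. It also requires decay bounds on $\nabla H_\e$ that are uniform in $\e$. This rests on the kernel estimates behind \autoref{pr:potentialdecay} being uniform for $\yh$ in a compact subset of $S(t)$. That uniformity is true, and is implicitly used in that proposition anyway, but you should state it.

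\emph{Details to tighten.} None of these is a gap.
\begin{itemize}
\item With $|\hat\eta|\le 1$, the Fourier argument gives convergence by dominated convergence, not monotone convergence. Monotone convergence would need a specially chosen mollifier.
\item Applying \autoref{NewtonianDirichlet} to the signed measure $\mu_\e-\mu$ needs a polarization step. You can bypass it: writing $\mu_\e=\mu*\eta_\e$, one has $N_\e=N*\eta_\e$ by Tonelli, so $\nabla N_\e=(\nabla N)*\eta_\e\to\nabla N$ in $L^2(\Rnp)$ directly from the lemma.
\item Identifying the weak gradient of $U$ as the $L^2$-limit of $\nabla U_\e$ also uses $U_\e\to U$ in $L^1_{loc}(S(t))$. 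This holds because $N*\eta_\e\to N$ in $L^1_{loc}$ and $H_\e\to H$ locally uniformly.
\end{itemize}
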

\begin{proof}
The formal justification is the same as for the Newtonian case in \autoref{NewtonianDirichlet}, using additionally that $U$ satisfies the Neumann condition $\partial U/\partial \nu=0$ on the strip boundary $\partial S(t)$.

To justify the proposition rigorously, rather than adapting the full Newtonian proof from Landkof we utilize \autoref{NewtonianDirichlet} for the Newtonian energy and handle the other parts of the energy via Green's formula.

The energy can be written as the Newtonian part plus the rest:
\begin{align}
E_\mu(t)
& = \int_K \! \int_K \frac{1}{|\xh-\yh|^{n-1}} \, d\mu(\xh) d\mu(\yh) + \int_K \! \int_K H_t(\xh,\yh) \, d\mu(\xh) d\mu(\yh) \notag \\
& = a_n \int_\Rnp |\nabla N|^2 \, d\xh + \int_K H(\xh) \, d\mu(\xh) \label{eq:twoterms}
\end{align}
by \autoref{NewtonianDirichlet} for the Newtonian energy. The first term in \eqref{eq:twoterms} decomposes further into a sum of Dirichlet integrals inside and outside the strip. Applying Green's theorem to the portion outside the strip yields that
\begin{align*}
\int_\Rnp |\nabla N|^2 \, d\xh
& = \int_{S(t)} |\nabla N|^2 \, d\xh - \int_{\partial S(t)} N \frac{\partial N}{\partial \nu} \, dx
\end{align*}
since $N$ is harmonic on $\Rnp \setminus S(t)$ and decays fast enough at infinity; here $\nu$ is the outward normal from the strip.

For the second term in \eqref{eq:twoterms}, observe that $d\mu=-a_n \Delta U$ distributionally in the strip $S(t)$, since $U=N+H$ and $H$ is harmonic in the strip. Since $\nabla N$ and $\nabla H$ are square integrable on $S(t)$, Green's formula yields that
\begin{align*}
\int_K H(\xh) \, d\mu(\xh)
& = a_n \int_{S(t)} \nabla U \cdot \nabla H \, d\xh
\end{align*}
where the boundary terms vanish because $\partial U/\partial \nu = 0$ on $\partial S(t)$ while $\nabla U$ and $H$ decay like $O(1/|x|^{n-1})$ and $O(1/|x|^{n-2})$ as $|x| \to \infty$, respectively. In more detail, the integral of $H \frac{\partial U}{\partial \nu}$ with respect to $x$ over a sphere of radius $R$ in $\Rn$ is bounded by $O(1/R^{n-2})$, and that expression tends to $0$ as $R \to \infty$ since $n \geq 3$.

Combining these formulas for the two terms in \eqref{eq:twoterms}, and noting that
\begin{equation} \label{identity}
|\nabla N|^2 + \nabla U \cdot \nabla H = |\nabla U|^2 - \nabla N \cdot \nabla H
\end{equation}
because $U=N+H$, we find
\[
E_\mu(t) = a_n \left( \int_{S(t)} |\nabla U|^2 \, d\xh - \int_{\partial S(t)} N \frac{\partial N}{\partial \nu} \, dx - \int_{S(t)} \nabla N \cdot \nabla H \, d\xh \right) .
\]
The Neumann boundary condition $\partial U/\partial \nu=0$ on $\partial S(t)$ means that $-\partial N/\partial \nu = \partial H/\partial \nu$. Hence by Green's theorem and the decay estimates on $N$ and $\partial H/\partial \nu$, the second and third terms on the right side of the last formula sum to $a_n \int_{S(t)} N \Delta H \, d\xh$, which vanishes by harmonicity of $H$. The proposition is now proved.
\end{proof}

As previously, we write $\mu_t$ for an equilibrium measure on $K$ with respect to the kernel $G_t$, so that the corresponding equilibrium potential is
\[
U_t(\xh) = \int_K G_t(\xh,\yh) \, d\mu_t(\yh) , \qquad \xh \in \overline{S(t)} .
\]
\autoref{StripDirichlet} applies in particular to $\mu_t$ and $U_t$, and thus expresses the strip energy $E_K(t)$ in terms of the Dirichlet integral of $U_t$, as stated in the next corollary. The reciprocal of the energy turns out to be a useful quantity to consider too.
\begin{corollary}[Dirichlet integral for reciprocal energy when $n \geq 3$] \label{StripDirichletReciprocal}
If $K \subset \Rnp$ is a compact subset of the strip $S(t)$ and has finite energy $E_K(t)$, then
\begin{align*}
E_K(t) & = a_n \int_{S(t)} |\nabla U_t|^2 \, d\xh , \\
\frac{1}{E_K(t)} & = a_n \int_{S(t)} |\nabla u_t|^2 \, d\xh ,
\end{align*}
where $u_t=U_t/E_K(t)$ is a normalized equilibrium potential.
\end{corollary}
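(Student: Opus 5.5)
The first identity is \autoref{StripDirichlet} applied to $\mu=\mu_t$. By definition of the strip energy, an equilibrium measure $\mu_t$ is a probability measure on $K$ with $E_{\mu_t}(t)=E_K(t)$, and this is finite by hypothesis. \autoref{StripDirichlet} requires only that $\mu$ be a probability measure on the compact set $K\subset S(t)$ with finite $G_t$-energy, with no equilibrium property assumed. Applying it to $\mu_t$ and its potential $U_t$ gives
\[
a_n\int_{S(t)}|\nabla U_t|^2\,d\xh = E_{\mu_t}(t)=E_K(t).
\]
I would add one remark. If $K$ is not contained in $\Rn$, the equilibrium measure need not be unique. That causes no difficulty, because the identity holds for whichever equilibrium measure $\mu_t$ is chosen, and the right side $E_K(t)$ does not depend on that choice.

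For the second identity, I first need $E_K(t)>0$ so that $u_t=U_t/E_K(t)$ is defined. Since $n\geq 3$, the kernel $G_t$ is a sum of positive terms, so the energy is positive, as noted in \autoref{sec:stripenergy}. It is also finite by hypothesis. Then $\nabla u_t=\nabla U_t/E_K(t)$ holds as weak gradients, and the first identity gives
\[
a_n\int_{S(t)}|\nabla u_t|^2\,d\xh=\frac{1}{E_K(t)^2}\,a_n\int_{S(t)}|\nabla U_t|^2\,d\xh=\frac{E_K(t)}{E_K(t)^2}=\frac{1}{E_K(t)}.
\]

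There is no real obstacle in this argument, since all the analytic work was already done in \autoref{StripDirichlet}. The only points to check are the finiteness hypothesis needed to invoke that proposition and the positivity of $E_K(t)$ needed to divide by it. Both follow from the hypothesis $E_K(t)<\infty$ together with $n\geq 3$.
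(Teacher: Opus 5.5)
Your proposal is correct and follows the paper's own argument: apply \autoref{StripDirichlet} to the equilibrium measure $\mu_t$ (whose energy equals $E_K(t)<\infty$) to get the first identity, then divide by $E_K(t)^2$ to get the second. Your extra checks are sound and make explicit what the paper leaves implicit: $E_K(t)>0$ for $n\geq 3$, and the identity holds for whichever equilibrium measure is chosen.
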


\subsection{\texorpdfstring{Variational characterization of the reciprocal energy when $n \geq 3$}{Variational characterization of the reciprocal energy when n is at least 3}}

The Dirichlet integral formulas in \autoref{StripDirichletReciprocal} motivate a variational characterization of the reciprocal energy.
\begin{proposition}[Variational characterization of the reciprocal energy when $n \geq 3$] \label{Characterization}
If $K \subset \Rnp$ is a compact subset of the strip $S(t), t>0$, and has finite energy $E_K(t)$, then
\begin{equation} \label{eq:variational}
\frac{1}{E_K(t)} = a_n \min_v \int_{S(t)} |\nabla v|^2 \, d\xh
\end{equation}
where the minimum is taken over the class of functions $v$ such that:
\begin{itemize}
\item $v$ is measurable and locally bounded on $S(t)$,
\item $v$ has weak gradient in $L^2(S(t))$,
\item $v(\xh)=O(1/|x|^{n-2})$ and $\nabla v(\xh) = O(1/|x|^{n-1})$ as $|x| \to \infty$, uniformly with respect to $z \in [-t,t]$ where $\xh=(x,z) \in S(t)$,
\item $v \geq 1$ on $K$ except possibly on a subset $Z \subset K$ that is a countable union of compact sets each having $(n-1)$-capacity zero,
\item $v$ is continuous at each point in $\overline{S(t)} \setminus Z$.
\end{itemize}
The minimum in \eqref{eq:variational} is achieved when $v$ is a normalized equilibrium potential $u_t=U_t/E_K(t)$.
\end{proposition}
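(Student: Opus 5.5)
The plan has two parts. First, show that the normalized equilibrium potential $u_t=U_t/E_K(t)$ is admissible. Then, for an arbitrary admissible $v$, show that $a_n\int_{S(t)}|\nabla v|^2\,d\xh \geq 1/E_K(t)$.

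\emph{Admissibility of $u_t$.} The decay conditions come from \autoref{pr:potentialdecay}, and square integrability of $\nabla u_t$ from \autoref{StripDirichletReciprocal}. Local boundedness and the condition $u_t\geq 1$ on $K$ off an exceptional set come from the Frostman-type equilibrium properties in the strip, which I take from the background section (\autoref{sec:background}) and \cite{CL26}:
\begin{itemize}
\item $U_t\leq E_K(t)$ on $\overline{S(t)}$;
\item $U_t\geq E_K(t)$ on $K$ except on a set $Z$ that is a countable union of compact sets of $(n-1)$-capacity zero.
\end{itemize}
These transfer from the Newtonian setting because $G_t$ differs from the Newtonian kernel by the smooth, bounded-near-$K$ term $H_t$. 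Continuity of $u_t$ off $Z$ follows from the continuity principle for Newtonian potentials together with smoothness of $H$. By \autoref{StripDirichletReciprocal}, $u_t$ attains the value $1/E_K(t)$.

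\emph{Lower bound for arbitrary $v$.} Write $v=u_t+w$ and expand
\[
\int|\nabla v|^2=\int|\nabla u_t|^2+2\int\nabla u_t\cdot\nabla w+\int|\nabla w|^2 .
\]
It suffices to show the cross term $\int_{S(t)}\nabla u_t\cdot\nabla w\,d\xh$ is nonnegative. Since $-a_n\Delta U_t=\mu_t$ distributionally in $S(t)$ and $\partial U_t/\partial\nu=0$ on $\partial S(t)$, Green's formula should give
\[
a_n\int_{S(t)}\nabla U_t\cdot\nabla w\,d\xh=\int_K w\,d\mu_t .
\]
The boundary term on $|x|=R$ is $O(R^{n-1}\cdot R^{-(n-2)}\cdot R^{-(n-1)})=O(R^{-(n-2)})\to0$, by the decay hypotheses on $w$ and on $\nabla U_t$. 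The integral $\int_K w\,d\mu_t=\int_K(v-u_t)\,d\mu_t$ is nonnegative, for three reasons:
\begin{itemize}
\item $v\geq1$ on $K\setminus Z$;
\item $u_t\leq 1$ everywhere;
\item $\mu_t$ does not charge $Z$, because $\mu_t$ has finite $G_t$-energy and hence finite $(n-1)$-energy, since $H_t$ is bounded on $K\times K$.
\end{itemize}

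\emph{Main obstacle.} The hard step is justifying the identity $a_n\int\nabla U_t\cdot\nabla w=\int_K w\,d\mu_t$ rigorously when $w$ is merely a locally bounded function with $L^2$ gradient, continuous off $Z$, and $\mu_t$ is a singular measure. I would first split $U_t=N+H$ as in the proof of \autoref{StripDirichlet}. The $H$ part is handled by classical Green's formula, since $H$ is smooth and harmonic on $\overline{S(t)}$, using the Neumann relation $\partial H/\partial\nu=-\partial N/\partial\nu$ and the known decay. For the Newtonian part $\int\nabla N\cdot\nabla w=a_n^{-1}\int w\,d\mu_t$, I would do the following:
\begin{enumerate}
\item Mollify $w$ (after cutting off outside a large ball and extending across $\partial S(t)$ by reflection) to get $w_\epsilon$.
\item Apply the identity to $w_\epsilon$, where it is classical.
\item Pass to the limit, using $\nabla w_\epsilon\to\nabla w$ in $L^2$ on the left.
\item On the right, use convergence $w_\epsilon\to w$ $\mu_t$-almost everywhere, which holds because $w$ is continuous off the $\mu_t$-null set $Z$, together with local boundedness for dominated convergence.
\end{enumerate}
This mirrors Landkof's treatment in \cite{L72}.
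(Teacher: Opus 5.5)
Your proposal is correct and follows essentially the same route as the paper. You get admissibility of $u_t$ from \autoref{pr:potentialdecay}, \autoref{StripDirichletReciprocal} and \autoref{th:upperu}. You then show the cross term $\int_{S(t)}\nabla u_t\cdot\nabla w\,d\xh$ is nonnegative via Green's formula against a reflected mollification $w^{(\e)}$, pointwise convergence off the exceptional set, local boundedness and $\mu_t(Z)=0$.

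The only real difference is your split $U_t=N+H$, which the paper avoids by applying Green's formula directly to $U_t$: it uses $-a_n\Delta U_t=\mu_t$ in $S(t)$ and $\partial U_t/\partial\nu=0$, so the $\partial S(t)$ term vanishes at once. Note that over $S(t)$ your Newtonian identity $\int\nabla N\cdot\nabla w=a_n^{-1}\int w\,d\mu_t$ as written omits the flux term $\int_{\partial S(t)} w\,\partial_\nu N\,dS$, which cancels only once the $H$ part is added.
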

Such variational characterizations (essentially the Dirichlet principle) are well known in the Newtonian case, and related capacity problems. The proof below follows the standard lines, while taking care with the decay at infinity and the Neumann conditions on the strip boundary.
\begin{proof}[Proof of \autoref{Characterization}]
When $v$ is taken to be a normalized equilibrium potential $u_t=U_t/E_K(t)$, equality holds in the variational formula \eqref{eq:variational} by \autoref{StripDirichletReciprocal}. Observe that $u_t$ satisfies all the hypotheses of the proposition, by \autoref{pr:potentialdecay}, \autoref{StripDirichlet} and \autoref{th:upperu}, which give that $u_t = 1$ on $K$ except on a countable union of compact subsets each having capacity zero. Hence ``$\geq$'' holds in \eqref{eq:variational}.

Next we prove ``$\leq$''. For notational simplicity, from now on write $U_t$ as $U$, $u_t$ as $u$, and $\mu_t$ as $\mu$. The task is to show that the upper bound
\begin{equation} \label{eq:vineq}
\frac{1}{E_K(t)} \leq a_n \int_{S(t)} |\nabla v|^2 \, d\xh
\end{equation}
holds whenever $v$ possesses the properties specified in the proposition.

Let
\[
w=v-u .
\]
We will show that the gradients $\nabla u$ and $\nabla w$ have nonnegative $L^2$-inner product:
\begin{equation} \label{orthogonal}
\int_{S(t)} \nabla u \cdot \nabla w \, d\xh \geq 0 .
\end{equation}
Assuming this formula for now, we deduce that
\begin{align*}
\frac{1}{a_n} \frac{1}{E_K(t)}
& \leq \int_{S(t)} |\nabla u|^2 \, d\xh + 2 \int_{S(t)} \nabla u \cdot \nabla w \, d\xh \qquad \text{by \autoref{StripDirichletReciprocal} and \eqref{orthogonal}} \\
& = \int_{S(t)} |\nabla (u+w)|^2 \, d\xh - \int_{S(t)} |\nabla w|^2 \, d\xh \\
& \leq \int_{S(t)} |\nabla v|^2 \, d\xh
\end{align*}
since $u+w=v$. That proves \eqref{eq:vineq} and hence the proposition.

Thus the task is to show nonnegativity of the inner product in \eqref{orthogonal}. We start by observing that $u$ and $w$ decay as $|x| \to \infty$ within the strip, since the decay estimate \eqref{Udecay} applies to $u$ while $v$ decays similarly by hypothesis. Further, each function has square-integrable weak derivatives on $S(t)$. Let $0<\e<t$, extend $w$ locally across the boundaries of the strip at $z=\pm t$ by even reflection, and write $w^{(\e)}$ for the $\e$-mollification of this extended $w$ with respect to a smooth bump function supported in the unit ball of $\Rnp$. Since $a_n \Delta U = -d\mu$ in the distributional sense and $w^{(\e)}$ is smooth on $\overline{S(t)}$, Green's theorem implies that
\[
a_n \int_{S(t)} \nabla U \cdot \nabla w^{(\e)} \, d\xh
= \int_K w^{(\e)} \, d\mu + a_n \int_{\partial S(t)} \frac{\partial U}{\partial \nu} \, w^{(\e)} \, dS
\]
where the boundary term at infinity is absent since $\partial U/\partial \nu = O(1/|x|^{n-1})$ and $w^{(\e)}=O(1/|x|^{n-2})$ by the decay estimates above, as $|x| \to \infty$. In the second term on the right, the Neumann boundary condition ensures that $\partial U/\partial \nu=0$ on $\partial S(t)$, and so that second term equals $0$. On the left side, $\nabla w^{(\e)} \to \nabla w$ in $L^2(S(t))$ as $\e \to 0$.

Thus to prove \eqref{orthogonal}, we want to show that the first term on the right side converges to a nonnegative limit: $\lim_{\e \to 0} \int_K w^{(\e)} \, d\mu \geq 0$. Note that $u$ and $v$ and hence also $w$ are locally bounded. Hence it suffices to show pointwise convergence, namely that $\lim_{\e \to 0} w^{(\e)} \geq 0$ on $K$ except for a set of $\mu$-measure zero.

Let $Z_u$ and $Z_v$ be subsets of $K$ that can be written as countable unions of compact sets having $(n-1)$-capacity zero such that $u$ is continuous at each point of $\overline{S(t)} \setminus Z_u$ with $u=1$ on $K \setminus Z_u$, and $v$ is continuous at each point of $\overline{S(t)} \setminus Z_v$ with $v \geq 1$ on $K \setminus Z_v$. Let $Z=Z_u \cup Z_v$, so that $w$ is continuous at each point in $\overline{S(t)} \setminus Z$ and $Z$ is a countable union of compact sets having $(n-1)$-capacity zero. The mollified function converges pointwise with $w^{(\e)}(\xh) \to w(\xh)$ for each $\xh \in K \setminus Z$. Thus $\lim_{\e \to 0} w^{(\e)} = w(\xh) \geq 0$, since $v(\xh) \geq 1$ and $u(\xh)=1$. To finish the proof of this step, note that $\mu(Z)=0$ because every compact set of capacity zero has $\mu$-measure zero.
\end{proof}

Continue to assume $n \geq 3$. We aim to invoke the variational characterization in \autoref{Characterization} to obtain an upper bound on the reciprocal energy and then differentiate the reciprocal energy with respect to the strip thickness parameter $t$.

First we construct a good trial function for the variational characterization. Fix $\tau>0$ and consider a normalized equilibrium potential $u_\tau=U_\tau/E_K(\tau)$ on $S(\tau)$. Rescale it in the vertical direction and construct a trial function $v_t$ on $S(t)$ by defining
\begin{equation} \label{defvt}
v_t(x,z) = u_\tau(x,\tau z/t) , \qquad x \in \Rn , \quad z \in [-t,t] ,
\end{equation}
when $t>0$. In the special case $t=\tau$, the function is preserved since $v_\tau=u_\tau$.
\begin{corollary}[Variational upper bound when $n \geq 3$, for sets in $\Rn$] \label{StripDirichletReciprocalUpper}
Let $\tau>0$. If $K \subset \Rn$ is compact with finite energy $E_K(\tau)$ then
\[
\frac{1}{E_K(t)} \leq a_n \int_{S(t)} |\nabla v_t|^2 \, d\xh
\]
for all $ t>0$, with equality at $t=\tau$.
\end{corollary}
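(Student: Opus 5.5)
The plan is to check that $v_t$ is an admissible trial function in \autoref{Characterization} on the strip $S(t)$. The upper bound then follows at once from \eqref{eq:variational}. Equality at $t=\tau$ comes from $v_\tau=u_\tau$ together with \autoref{StripDirichletReciprocal}, which gives $1/E_K(\tau)=a_n\int_{S(\tau)}|\nabla u_\tau|^2\,d\xh$.

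The first thing to note is that the vertical dilation $(x,z)\mapsto(x,\tau z/t)$ maps $S(t)$ onto $S(\tau)$ and fixes the central hyperplane $\Rn\times\{0\}$ pointwise. Since $K\subset\Rn$, the set $K$ sits inside both strips and is carried to itself. I then verify the hypotheses of \autoref{Characterization} for $v_t$ one at a time.

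\emph{Measurability and local boundedness.} These transfer directly from $u_\tau$, because the dilation is a linear bijection.

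\emph{Weak gradient.} The chain rule for weak derivatives under a linear change of variables gives $\nabla_x v_t(x,z)=\nabla_x u_\tau(x,\tau z/t)$ and $\partial_z v_t(x,z)=(\tau/t)\,\partial_z u_\tau(x,\tau z/t)$. The Jacobian of the change of variables is $t/\tau$, so
\[
\int_{S(t)}|\nabla v_t|^2\,d\xh=\frac{t}{\tau}\int_{S(\tau)}|\nabla_x u_\tau|^2\,d\xh+\frac{\tau}{t}\int_{S(\tau)}(\partial_z u_\tau)^2\,d\xh .
\]
This is finite by \autoref{StripDirichlet}.

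\emph{Decay at infinity.} \autoref{pr:potentialdecay} and \eqref{Udecay} give $u_\tau=O(|x|^{-(n-2)})$ and $\nabla u_\tau=O(|x|^{-(n-1)})$, uniformly in $z\in[-\tau,\tau]$. The dilation rescales $z$ and multiplies $\partial_z$ by the constant $\tau/t$, so $v_t$ and $\nabla v_t$ satisfy the same estimates, uniformly in $z\in[-t,t]$.

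\emph{Boundary condition on $K$ and continuity.} The equilibrium potential satisfies $u_\tau=1$ on $K\setminus Z$, where $Z$ is a countable union of compact sets of $(n-1)$-capacity zero; this is the fact from \autoref{th:upperu} already used in the proof of \autoref{Characterization}. It is also continuous on $\overline{S(\tau)}\setminus Z$. Because the dilation is the identity on $\Rn\times\{0\}\supset K$, we get $v_t=u_\tau=1$ on $K\setminus Z$ with the same exceptional set $Z$. Continuity of $v_t$ on $\overline{S(t)}\setminus Z$ follows since the dilation is a homeomorphism of the closed strips that fixes $Z$.

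The remaining point is finiteness of $E_K(t)$ for every $t>0$, which \autoref{Characterization} requires as a hypothesis. I expect this to be the main, though mild, obstacle. The plan is to note that the above argument gives the finiteness for free once the characterization can be applied. Alternatively, positive $(n-1)$-capacity of $K$, which follows from $E_K(\tau)<\infty$, yields finite strip energy at every $t$: the kernel $G_t$ differs from $|\xh-\yh|^{-(n-1)}$ by a term $H_t$ that is bounded on $K\times K$. One could also argue via \cite{CL26}. With finiteness established, \autoref{Characterization} applies and yields the stated inequality for all $t>0$.
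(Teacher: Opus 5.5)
Your proposal is correct and follows the paper's own argument: you check that $v_t$ inherits every admissibility condition of the variational characterization from $u_\tau$, with the key point being that the vertical rescaling fixes $K\subset\Rn$, so that $v_t=u_\tau$ on $K$; equality at $t=\tau$ then comes from $v_\tau=u_\tau$. You also verify that $E_K(t)<\infty$ for every $t>0$, a detail the paper leaves implicit; your first suggestion there is circular, but the second is valid: when $n\ge 3$, $H_t\ge 0$ and $H_t$ is bounded on $K\times K$, so $E_K(\tau)<\infty$ gives finiteness of the $(n-1)$-energy and hence of $E_K(t)$.
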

\begin{proof}
The corollary will follow immediately from \autoref{Characterization} once we confirm that the hypotheses for $v$ in that result are satisfied by $v_t$. The $z$-rescaling in the definition of $v_t$ does not materially affect those conditions, since the assumption $K \subset \Rn$ ensures that $K$ remains fixed under the rescaling and so $v_t=u_\tau$ on $K$.

Thus it is enough to check the corresponding properties for the normalized equilibrium potential $u_\tau$ on $S(\tau)$, which we did already in the proof of \autoref{Characterization}.
\end{proof}

\subsection{\texorpdfstring{Proof of \autoref{EKderivDirichlet} when $n \geq 3$}{Proof of the strip energy derivative when n is at least 3}}
First, $E_K(t)$ is continuously differentiable by \cite[Theorem 3.4]{CL26} with $q=n-1$. The proof of that theorem uses uniqueness of the equilibrium measure \cite[Lemma 2.1]{CL26}, relying on our hypothesis $K \subset \Rn$. Note that although the theorem gives a formula for $E^\prime(t)$ in terms of the $t$-derivative $\partial G_t/\partial t$ of the kernel, we have not found a way to use it productively and so instead proceed as follows.

Fix $\tau>0$. Using difference quotients,
\begin{align}
\left( \frac{1}{E_K(t)} \right)_{t=\tau}^{\!\! \prime}
& = \lim_{t \searrow \tau} \frac{E_K(t)^{-1} - E_K(\tau)^{-1}}{t-\tau} \label{diffquotient} \\
& \leq a_n \left. \frac{d\ }{dt} \, \int_{S(t)} |\nabla v_t|^2 \, d\xh \, \right|_{t=\tau} \label{diffquotuppper}
\end{align}
by \autoref{StripDirichletReciprocalUpper}, which again uses the hypothesis $K \subset \Rn$. The derivative in \eqref{diffquotuppper} does indeed exist, as we proceed to demonstrate.

In view of the definition of $v_t$ in \eqref{defvt}, the Dirichlet integral of $v_t$ decomposes into horizontal and vertical parts as follows:
\begin{align}
& \int_{S(t)} |\nabla v_t|^2 \, d\xh \notag \\
& = \int_\Rn \int_{-t}^t \left\{ |(\nabla_{\! x} u_\tau)(x,\tau z/t)|^2 + (\tau/t)^2 |(\partial_z u_\tau)(x,\tau z/t)|^2 \right\} dz dx \notag \\
& = (t/\tau) \int_\Rn \int_{-\tau}^\tau \left\{ |\nabla_{\! x} u_\tau(x,z)|^2 + (\tau/t)^2 |\partial_z u_\tau(x,z)|^2 \right\} dz dx \qquad \text{by $z \mapsto (t/\tau) z$} \notag \\
& = (t/\tau) \int_{S(\tau)} \left\{ |\nabla_{\! x} u_\tau|^2 + (\tau/t)^2 |\partial_z u_\tau|^2 \right\} d\xh . \label{changesofvariable}
\end{align}
Thus the derivative in \eqref{diffquotuppper} evaluates to give that
\begin{align}
\left( \frac{1}{E_K(t)} \right)_{t=\tau}^{\!\! \prime}
& \leq a_n \, \frac{1}{\tau} \int_{S(\tau)} |\nabla u_\tau|^2 \, d\xh - a_n \, \frac{2}{\tau} \int_{S(\tau)} |\partial_z u_\tau|^2 \, d\xh \notag \\
& = \frac{1}{\tau E_K(\tau)} - \frac{2a_n}{\tau} \int_{S(\tau)} |\partial_z u_\tau|^2 \, d\xh \label{upperrightderiv}
\end{align}
by \autoref{StripDirichletReciprocal}. Multiplying through the inequality by $-\tau E_K(\tau)^2$ implies that
\[
\tau E_K^{\, \prime}(\tau) \geq - E_K(\tau) + 2a_n \int_{S(\tau)} |\partial_z U_\tau|^2 \, d\xh
\]
since $u_\tau = U_\tau/E_K(\tau)$. Thus we have proved the ``$\geq$'' direction of \autoref{EKderivDirichlet}.

To obtain the ``$\leq$'' direction, consider the difference quotient in \eqref{diffquotient} but now with $t \nearrow \tau$, so that $t-\tau$ is negative and the inequality in \eqref{diffquotuppper} reverses direction. The inequalities in the rest of the argument reverse also.

\subsubsection*{Remark} This proof relies on our knowing that the energy derivative $E_K^{\, \prime}(\tau)$ exists, for which fact we used \cite[Theorem 3.4]{CL26}. Without that differentiability, the proof would show only that the upper right-derivative of $(E_K)^{-1}$ is less than or equal to \eqref{upperrightderiv} and that the lower left-derivative is greater than or equal to the same expression; we would have no information on the lower right-derivative or the upper left-derivative, and so could not conclude the proposition.

\section{\texorpdfstring{\bf Proof of \autoref{EKderivDirichlet} for $n=2$}{Proof of the strip energy derivative for n=2}}
\label{sec:dirichlet2}

This section proves \autoref{EKderivDirichlet} for $n=2$.

\subsection{Dirichlet integrals for the strip energy} Fix $t>0$, suppose $K \subset \R^3$ is a compact subset of the strip $S(t) = \R^2 \times (-t,t)$, with $K$ not necessarily lying in the central plane $\R^2$, and that $\mu$ is a probability measure on $K$ with finite energy $E_\mu(t)=\int_K \! \int_K G_t(\xh,\yh) \, d\mu d\mu<\infty$. As in \autoref{sec:Dirichletintegrals}, the potential of $\mu$ can be written
\[
U(\xh) = N(\xh)+H(\xh) , \qquad \xh=(x,z) \in \overline{S(t)} ,
\]
where (remembering that $n=2$) one has
\[
N(\xh)=\int_K \frac{1}{|\xh-\yh|} \, d\mu(\yh) , \qquad H(\xh) = \int_K H_t(\xh,\yh) \, d\mu(\yh) .
\]
Note $N$ is the Newtonian potential of $\mu$ in $\R^3$, which is smooth away from $K$ and decays as $|x| \to \infty$ in the strip according to
\[
N(\xh)=O(1/|x|) , \qquad \nabla N(x)=O(1/|x|^2) ,
\]
uniformly with respect to $z \in [-t,t]$. (In fact, $N$ decays on $\R^3$ according to $N(\xh)=O(1/|\xh|)$ and $\nabla N(\xh)=O(1/|\xh|^2)$.) Meanwhile, $H$ is harmonic and smooth on the closure of $S(t)$. For $U$, taking $n=2$ in \autoref{pr:potentialdecay} gives that
\begin{equation} \label{Udecay2}
\nabla U(\xh) = O(1/|x|)
\end{equation}
as $|x| \to \infty$. This decay rate is not quite enough for square integrability of the gradient on $S(t)$, and for that reason the Dirichlet integral formula in the next result looks somewhat different from the one in \autoref{StripDirichlet}. It involves subtracting a logarithmic term and taking the limit over a family of cylinders that expand to fill the strip.
\begin{proposition}[Dirichlet integral for strip energy when $n=2$] \label{StripDirichlet2}
The potential $U(\xh)= \int_K G_t(\xh,\yh) \, d\mu(\yh)$ when $n=2$ has weak derivatives in $L^2_{loc}(S(t))$, with
\begin{align*}
\lim_{R \to \infty} & \left( a_2 \int_{S(R,t)} |\nabla U|^2 \, d\xh - \frac{1}{t} \log R \right) \\
& = \int_K \! \int_K G_t(\xh,\yh) \, d\mu(\xh) d\mu(\yh) = E_\mu(t) < \infty
\end{align*}
where $a_2=1/4\pi$ and $S(R,t) = \{ (x,z) \in \R^2 \times \R : |x|<R, |z| < t \}$ is a cylinder of height $2t$ and radius $R$.
\end{proposition}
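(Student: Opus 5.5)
I will follow the structure of the proof of \autoref{StripDirichlet}, truncating the strip to the cylinder $S(R,t)$ and tracking the lateral boundary terms, which no longer vanish when $n=2$. Write $U=N+H$ as above. By \autoref{NewtonianDirichlet}, which holds for $n=2$, the Newtonian energy equals $a_2\int_{\R^3}|\nabla N|^2\,d\xh$. The remainder part of the energy equals $\int_K H\,d\mu$. Weak differentiability of $U$ with $\nabla U\in L^2_{loc}(S(t))$ follows because $\nabla N\in L^2$ and $H$ is smooth on $\overline{S(t)}$.

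\textbf{Newtonian term.} Split $\int_{\R^3}|\nabla N|^2$ into the integral over $S(t)$ and the integral over its complement. Green's theorem on the complement gives $-\int_{\partial S(t)}N\,\partial N/\partial\nu$, where $\nu$ is the outward normal from the strip. This step is legitimate since $N=O(1/|\xh|)$ and $\nabla N=O(1/|\xh|^2)$ in $\R^3$. I would restrict the strip integral to $S(R,t)$ plus a tail; the tail is $o(1)$ as $R\to\infty$ because $|\nabla N|^2=O(|x|^{-4})$.

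\textbf{$H$ term.} Since $d\mu=-a_2\Delta U$ in $S(t)$, Green's formula on $S(R,t)$ with $R$ large, so that $K\subset S(R,t)$, gives
\[
\int_K H\,d\mu = a_2\int_{S(R,t)}\nabla U\cdot\nabla H\,d\xh - a_2\int_{|x|=R,\,|z|<t} H\,\frac{\partial U}{\partial r}\,dS .
\]
The top and bottom contributions vanish by the Neumann condition on $U$. The mollification argument from the proof of \autoref{Characterization} justifies applying Green's formula against the measure. The lateral term is where $n=2$ differs. Since $N=O(1/R)$, we have $H=U-N=\frac1t\log\frac1R+O(1/R)$ on $|x|=R$ by \autoref{pr:potentialdecay}. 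Also $\partial U/\partial r=-\frac{1}{tR}+O(1/R^2)$. The lateral surface has area $2\pi R\cdot 2t$. Hence the lateral term is
\[
-a_2\cdot 4\pi R t\cdot\Big(\tfrac1t\log\tfrac1R\Big)\Big(-\tfrac1{tR}\Big)+o(1) = -\frac{1}{t}\log R + o(1),
\]
using $a_2=1/4\pi$. The error terms are $O(\log R/R)$. This produces exactly the subtracted logarithm.

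\textbf{Combining the terms.} Use identity \eqref{identity}. Then the energy equals $a_2\int_{S(R,t)}|\nabla U|^2$, minus $a_2\int_{S(R,t)}\nabla N\cdot\nabla H$, minus $a_2\int_{\partial S(t)}N\,\partial N/\partial\nu$, minus $\frac1t\log R$, plus $o(1)$. The Neumann condition $\partial H/\partial\nu=-\partial N/\partial\nu$ on $z=\pm t$ lets me rewrite the boundary integral over the flat parts with $|x|<R$ as $a_2\int N\,\partial H/\partial\nu$. The part of that boundary integral with $|x|>R$ is $O(\int_R^\infty r^{-3}\,r\,dr)=o(1)$. Green's theorem for $N$ and harmonic $H$ on $S(R,t)$ then cancels the combination against $-a_2\int\nabla N\cdot\nabla H$, up to a lateral term $a_2\int_{|x|=R}N\,\partial H/\partial r$. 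That term is $O(R\cdot R^{-1}\cdot R^{-1})=o(1)$. Letting $R\to\infty$ gives the formula.

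\textbf{Main obstacle.} The delicate step is applying Green's theorem to $\nabla N\cdot\nabla H$ when $N$ is singular on $K$: $\nabla N$ is only $L^2$, while $H$ is smooth. I would handle this by applying the identity with $H$ smooth and $N$ in $W^{1,2}_{loc}$, approximating $N$ by mollifications. For that I need $\int N\,\Delta H=0$ to pass to the limit, which holds since $\Delta H=0$ identically. I would also double-check that the remainder estimates for $H$ are uniform in $z$, so that the lateral computation yields precisely $-\frac1t\log R+o(1)$.
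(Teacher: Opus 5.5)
Your proposal is correct and follows essentially the paper's route: the split $U=N+H$, \autoref{NewtonianDirichlet} for the Newtonian part, Green's formula on the cylinder $S(R,t)$ for $\int_K H\,d\mu$, identity \eqref{identity}, and Green's theorem for $\nabla N\cdot\nabla H$ using $\Delta H=0$, with the lateral side of the cylinder producing the $\frac{1}{t}\log R$ term. The only difference is bookkeeping. The paper applies Green's theorem to $N$ on the complement of the cylinder $S(R,t)$ rather than of the strip, so the boundary terms combine exactly into $-a_2\int_{\partial S(R,t)} U\,\partial U/\partial\nu\,dS$ for each $R$, which avoids your separate tail estimates over $|x|>R$.
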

\begin{proof}
Decompose the energy into the Newtonian part and the remainder as
\begin{align}
E_\mu(t)
& = \int_K \! \int_K \frac{1}{|\xh-\yh|} \, d\mu(\xh) d\mu(\yh) + \int_K \! \int_K H_t(\xh,\yh) \, d\mu(\xh) d\mu(\yh) \notag \\
& = a_2 \int_{\R^3} |\nabla N|^2 \, d\xh + \int_K H(\xh) \, d\mu(\xh) \label{eq:twoterms2}
\end{align}
by \autoref{NewtonianDirichlet}, which holds as noted there for $n=2$. The first term in \eqref{eq:twoterms2} further decomposes into the Dirichlet integrals inside and outside the cylinder. Choose $R$ large enough that $K \subset S(R,t)$. Applying Green's theorem outside the cylinder, where $N$ is harmonic, yields that
\begin{equation} \label{2dimnablaV}
\int_{\R^3} |\nabla N|^2 \, d\xh
= \int_{S(R,t)} |\nabla N|^2 \, d\xh - \int_{\partial S(R,t)} N \frac{\partial N}{\partial \nu} \, dS
\end{equation}
where $\nu$ is the outward normal from the cylinder.

To deal with the second term in \eqref{eq:twoterms2}, note that the Newtonian potential satisfies $- a_2 \Delta N = d\mu$ in the distributional sense on $\R^3$, where $a_2=1/4\pi$. Since $U=N+H$ and $H$ is harmonic in the strip, it follows that $-a_2 \Delta U=d\mu$ there. Because $\nabla N$ and $\nabla H$ are locally square integrable, Green's formula on the cylinder $S(R,t)$ yields that
\begin{equation} \label{2dimW}
\int_K H(\xh) \, d\mu(\xh)
= a_2 \left( \int_{S(R,t)} \nabla U \cdot \nabla H \, d\xh - \int_{\partial S(R,t)} H \frac{\partial U}{\partial \nu} \, dS \right) .
\end{equation}

Substituting \eqref{2dimnablaV} and \eqref{2dimW} into \eqref{eq:twoterms2} and then applying identity \eqref{identity}, we find
\begin{align*}
E_\mu(t)
& = a_2 \left( \int_{S(R,t)} |\nabla U|^2 \, d\xh - \int_{\partial S(R,t)} N \frac{\partial N}{\partial \nu} \, dS \right. \\
& \qquad \left. - \int_{S(R,t)} \nabla N \cdot \nabla H \, d\xh - \int_{\partial S(R,t)} H \frac{\partial U}{\partial \nu} \, dS \right) \\
& = a_2 \left( \int_{S(R,t)} |\nabla U|^2 \, d\xh - \int_{\partial S(R,t)} N \frac{\partial N}{\partial \nu} \, dS \right. \\
& \qquad \left. - \int_{\partial S(R,t)} N \frac{\partial H}{\partial \nu} \, dS - \int_{\partial S(R,t)} H \frac{\partial U}{\partial \nu} \, dS \right)
\end{align*}
by Green's theorem on the third term, since $\Delta H=0$ in the cylinder. Since $N+H=U$, the last formula simplifies to
\begin{align}
E_\mu(t)
& = a_2 \left( \int_{S(R,t)} |\nabla U|^2 \, d\xh - \int_{\partial S(R,t)} U \frac{\partial U}{\partial \nu} \, dS \right) \notag \\
& = a_2 \int_{S(R,t)} |\nabla U|^2 \, d\xh - \frac{1}{t} \log R + O \! \left( \frac{\log R}{R} \right) , \label{logRoverR}
\end{align}
where to arrive at \eqref{logRoverR} we used that $\partial U/\partial \nu=0$ on the top and bottom of the cylinder and that along the side $\{ |x|=R \}$, which has area $2\pi R \cdot 2t$, one has
\[
U = - \frac{1}{t} \log R + O(1/R), \qquad \frac{\partial U}{\partial r} = - \frac{1}{tR} + O(1/R^2) ,
\]
by \autoref{pr:potentialdecay} with $n=2$. Letting $R \to \infty$ in \eqref{logRoverR} completes the proof of \autoref{StripDirichlet2}.
\end{proof}

\subsection{\texorpdfstring{Proof of \autoref{EKderivDirichlet} when $n=2$}{Proof of the strip energy derivative when n=2}} The argument for $n=2$ that we develop in this section builds on the idea, used above for $n \geq 3$, of constructing a trial function for a variational characterization. The variational nature of the argument is obscured somewhat by the fact that the Dirichlet integral formula for the energy in \autoref{StripDirichlet2} involves a limiting process as $R \to \infty$. To circumvent this difficulty, we develop $t$-difference quotient estimates on the energy that persist even after letting $R \to \infty$.

\subsubsection*{Step 1 --- properties of the rescaled potential} Fix $\tau>0$. Let $U_t$ be the equilibrium potential for $E_K(t)$, for $t>0$. Define
\[
u_{R,t}(\xh) = \frac{t U_t(\xh) + \log R}{t E_K(t) + \log R} , \qquad \xh \in \overline{S(t)},
\]
whenever $R>0$ is large enough that the denominator is positive. Two facts will later be useful: that $u_{R,t}(\xh)=1$ if and only if $U_t(\xh) = E_K(t)$, and that at radius $|x|=R$ the function can be bounded according to
\begin{equation} \label{eq:littleulargeR}
\left. u_{R,t}(\xh) \right|_{r=R} = \frac{O(1/R)}{t E_K(t) + \log R}
\end{equation}
by the potential estimate in \autoref{pr:potentialdecay} with $n=2$, where the constant implicit in the $O(1/R)$ estimate is independent of $z \in [-t,t]$ but can depend on $t$. Importantly, in the proof that follows we will let $R \to \infty$ before any changes are made to the values of $t$ or $\tau$.

We rescale from $S(\tau)$ to $S(t)$ by defining
\[
v_{R,t}(x,z) = u_{R,\tau}(x,\tau z/t) , \qquad (x,z) \in \overline{S(t)} .
\]
Let
\[
w_{R,t}=v_{R,t}-u_{R,t} .
\]
We claim that
\begin{equation} \label{orthogonal2}
\int_{S(R,t)} \nabla u_{R,t} \cdot \nabla w_{R,t} \, d\xh = O(1/R(\log R)^2) .
\end{equation}

To begin proving \eqref{orthogonal2}, let $0<\e<t$, extend $w_{R,t}$ by even reflection across the boundaries of the strip at $z=\pm t$, and write $w_{R,t}^{(\e)}$ for the $\e$-mollification of $w_{R,t}$ with respect to a smooth bump function supported in the unit ball of $\R^3$. Since $- a_2 \Delta U_t = d\mu_t$ in the distributional sense and $w_{R,t}^{(\e)}$ is smooth on a neighborhood of $\overline{S(t)}$, Green's theorem implies that
\begin{equation} \label{crossterm}
a_2 \int_{S(R,t)} \nabla U_t \cdot \nabla w_{R,t}^{(\e)} \, d\xh
= \int_K w_{R,t}^{(\e)} \, d\mu_t + a_2 \int_{\partial S(R,t)} \frac{\partial U_t}{\partial \nu} \, w_{R,t}^{(\e)} \, dS .
\end{equation}
On the left side, $\nabla w_{R,t}^{(\e)} \to \nabla w_{R,t}$ in $L^2(S(R,t))$ as $\e \to 0$. On the right side, we will show the first integral tends to zero as $\e \to 0$.

Observe that $w_{R,t}^{(\e)}$ is bounded on $K$, independently of $\e$, by recalling the definitions of $u_{R,t}$ and $u_{R,\tau}$ and using the local boundedness of $U_t$ and $U_\tau$ from \autoref{th:upperu}. Thus to prove that $\int_K w_{R,t}^{(\e)} \, d\mu_t$ tends to zero, it is enough to show $\lim_{\e \to 0} w_{R,t}^{(\e)} = 0$ pointwise on $K$ except for a set of $\mu_t$-measure zero. By \autoref{th:upperu}, subsets $Z_\tau$ and $Z_t$ of $K$ exist that can be written as countable unions of compact sets having $(n-1)$-capacity zero and for which $U_\tau$ is continuous at each point of $\overline{S(\tau)} \setminus Z_\tau$ and $U_\tau=E_K(\tau)$ on $K \setminus Z_\tau$, and $U_t$ is continuous at each point of $\overline{S(t)} \setminus Z_t$ and $U_t = E_K(t)$ on $K \setminus Z_t$. Let $Z=Z_\tau \cup Z_t$, so that $Z$ is a countable union of compact sets having $(n-1)$-capacity zero. From the definitions, together with the fact that $K \subset \Rn$ is invariant under the rescaling of the $z$-variable, we see that $u_{R,t}$ and $v_{R,t}$ are continuous at each point of $\overline{S(t)} \setminus Z$ and they both equal $1$ on $K \setminus Z$. Thus $w_{R,t}$ is continuous at each point in $\overline{S(t)} \setminus Z$ and equals $0$ on $K \setminus Z$. By the continuity, the mollified function converges pointwise with $w_{R,t}^{(\e)}(\xh) \to w_{R,t}(\xh) = 0$ as $\e \to 0$, for each $\xh \in K \setminus Z$. The exceptional points satisfy $\mu_t(Z)=0$, because compact sets of capacity zero have $\mu_t$-measure zero. Thus we have proved for \eqref{crossterm} that $\int_K w_{R,t}^{(\e)} \, d\mu_t \to 0$ as $\e \to 0$.

Next we show the second integral on the right of \eqref{crossterm} is bounded by $O(1/R\log R)$, independently of $\e$. The Neumann boundary condition for $U_t$ ensures that $\partial U_t/\partial \nu=0$ on the top and bottom of the cylinder $S(R,t)$. On the sides where $r=R$, the normal derivative is simply the radial derivative, and so $\partial U_t/\partial \nu = O(1/R)$ by \eqref{Udecay2}. Further, $w_{R,t}(\xh)|_{r=R}=O(1/R \log R)$ by \eqref{eq:littleulargeR} applied to $u_{R,t}$ and $v_{R,t}$, and so for the mollified function we get $w_{R,t}^{(\e)}(\xh)=O(1/R\log R)$ with bounds that are independent of $\e$. The sides of the cylinder have area $2\pi R \cdot 2t$, and thus we conclude that the second integral on the right of \eqref{crossterm} is bounded by $O(1/R\log R)$. Now letting $\e \to 0$ in \eqref{crossterm} yields that
\[
a_2 \int_{S(R,t)} \nabla U_t \cdot \nabla w_{R,t} \, d\xh = O(1/R\log R) .
\]
After recalling the definition of $u_{R,t}$ in terms of $U_t$, we obtain \eqref{orthogonal2}.

\subsubsection*{Step 2 --- the variational-type inequality and its consequences} We have
\begin{align*}
& \int_{S(R,t)} |\nabla u_{R,t}|^2 \, d\xh \\
& = \int_{S(R,t)} |\nabla u_{R,t}|^2 \, d\xh + 2 \int_{S(R,t)} \nabla u_{R,t} \cdot \nabla w_{R,t} \, d\xh + O(1/R(\log R)^2) \quad \text{by \eqref{orthogonal2}} \\
& = \int_{S(R,t)} |\nabla (u_{R,t}+w_{R,t})|^2 \, d\xh - \int_{S(R,t)} |\nabla w_{R,t}|^2 \, d\xh +O(1/R(\log R)^2) \\
& \leq \int_{S(R,t)} |\nabla v_{R,t}|^2 \, d\xh + O(1/R(\log R)^2)
\end{align*}
since $u_{R,t}+w_{R,t}=v_{R,t}$. Expressing this variational-type inequality back in terms of $U_t$ and $U_\tau$, it implies
\begin{align}
& t^2 \int_{S(R,t)} |\nabla U_t(x,z)|^2 \, dx dz \notag \\
& \leq \left( \frac{t E_K(t) + \log R}{\tau E_K(\tau) + \log R} \right)^{\!\! 2} \tau^2 \int_{S(R,t)} |\nabla (U_\tau(x,\tau z/t))|^2 \, dx dz + O(1/R) . \label{2dimmidpoint}
\end{align}

For the pre-factor, we calculate as $R \to \infty$ that
\[
\left( \frac{t E_K(t) + \log R}{\tau E_K(\tau) + \log R} \right)^{\!\! 2} = 1 + 2 \frac{tE_K(t)-\tau E_K(\tau)}{\log R} + O(1/(\log R)^2) .
\]
Also, by using the same change of variable as in the derivation of \eqref{changesofvariable}, one finds that
\begin{align*}
& \int_{S(R,t)} |\nabla (U_\tau(x,\tau z/t))|^2 \, dx dz \\
& = \frac{t}{\tau} \left( \int_{S(R,\tau)} |\nabla U_\tau|^2 \, dxdz + \left( \left( \frac{\tau}{t} \right)^2 - 1 \right) \int_{S(R,\tau)} |\partial_z U_\tau|^2 \, dxdz \right) .
\end{align*}
Substituting the last two formulas into \eqref{2dimmidpoint}, and multiplying through by $a_2/t$ gives
\begin{align}
& a_2 t \int_{S(R,t)} |\nabla U_t|^2 \, dx dz \notag \\
& \leq \left( 1 + 2 \frac{tE_K(t)-\tau E_K(\tau)}{\log R} + O(1/(\log R)^2) \right) \label{eq:middleline} \\
& \quad \times \left( a_2 \tau \int_{S(R,\tau)} |\nabla U_\tau|^2 \, dxdz + a_2 \tau \left( \left( \frac{\tau}{t} \right)^2 - 1 \right) \int_{S(R,\tau)} |\partial_z U_\tau|^2 \, dxdz \right) + O(1/R) . \notag
\end{align}
Note that
\begin{equation} \label{eq:usefulestimates}
a_2 \tau \int_{S(R,\tau)} |\nabla U_\tau|^2 \, dxdz = \log R + O(1) , \qquad \int_{S(R,\tau)} |\partial_z U_\tau|^2 \, dxdz = O(1) ,
\end{equation}
from \autoref{StripDirichlet2} for the first equation, and for the second equation using that $\partial_z U_\tau$ decays like $O(1/r^2)$ by \autoref{pr:potentialdecay} with $n=2$. Now expand the right side of inequality \eqref{eq:middleline}; and for the terms that contain $1/\log R$ or $1/(\log R)^2$, substitute as in \eqref{eq:usefulestimates}; and then rearrange the resulting formula. One obtains that
\begin{align*}
& a_2 \tau \frac{(t^2-\tau^2)}{t^2} \int_{S(R,\tau)} |\partial_z U_\tau|^2 \, dxdz + O(1/\log R) \\
& \leq - a_2 t \int_{S(R,t)} |\nabla U_t|^2 \, dx dz + a_2 \tau \int_{S(R,\tau)} |\nabla U_\tau|^2 \, dxdz + 2 \left( tE_K(t)-\tau E_K(\tau) \right) .
\end{align*}
Letting $R \to \infty$ and applying \autoref{StripDirichlet2} on the right side for $t$ and also for $\tau$, we find
\[
a_2 (t-\tau) \frac{\tau(t+\tau)}{t^2} \int_{S(\tau)} |\partial_z U_\tau|^2 \, d\xh \leq tE_K(t)-\tau E_K(\tau) .
\]

\subsubsection*{Step 3 --- limit of difference quotients} Supposing $t>\tau$, we may divide the preceding formula by $(t-\tau)$ and let $t \searrow \tau$ to show that
\[
2 a_2 \int_{S(\tau)} |\partial_z U_\tau|^2 \, d\xh \leq \left. (tE_K(t))^\prime \right|_{t=\tau} .
\]
The reverse inequality follows by supposing $t<\tau$, dividing by $(t-\tau)$, and letting $t \nearrow \tau$. Hence equality holds, and so the proof of \autoref{EKderivDirichlet} is complete when $n=2$.

\subsubsection*{Remark} Just as in the higher dimensional case treated in the previous section, this proof for $n=2$ relies on our knowing by \cite[Theorem 3.4]{CL26} that $E_K(t)$ is differentiable.

\section{\bf Proof of the main result: \autoref{th:main}}
\label{sec:stripmin}

Fix $n \geq 2$. The goal in \autoref{conj:riesz} is to prove the ratio $\capnone(K)/\capntwo(K)$ is maximal when the compact set $K \subset \Rn$ is a closed ball. We assume $\capntwo(K)>0$ so that the ratio is well defined and $\capnone(K)>0$ so that it is not zero.

Choose $B$ to be a closed ball in $\Rn$ centered at the origin having $\capnone(B)=\capnone(K)$, or equivalently $V_{n-1}(K)=V_{n-1}(B)$. In the notation of \autoref{sec:stripenergy}, this means $E_K(\infty)=E_B(\infty)$. We will show $B$ has larger energy in each strip $S(t)$:
\begin{equation}\label{eq:t-energy-ineq}
E_K(t) \leq E_B(t), \qquad t \in (0,\infty) .
\end{equation}
Multiplying this inequality by $t$ and letting $t \to 0$, one deduces when $n\geq3$ that
\[
c_{n-2}V_{n-2}(K)
\leq \liminf_{t\searrow0}tE_K(t)
\leq \lim_{t\searrow0}tE_B(t)
=c_{n-2}V_{n-2}(B) ,
\]
by applying \cite[Theorem 3.3]{CL26} part (a) to $K$ and part (c) to $B$, with $q=n-1$. Hence
\[
V_{n-2}(K) \leq V_{n-2}(B) , \qquad n \geq 3 .
\]
For $n=2$, the corresponding argument gives
\[
V_{log}(K) \leq V_{log}(B) , \qquad n=2 .
\]
Thus for each $n$, we conclude $\capntwo(K) \geq \capntwo(B)$ and so the ball $B$ maximizes the ratio $\capnone(K)/\capntwo(K)$, completing the proof.

Thus the task is to prove \eqref{eq:t-energy-ineq}. First we dispose of an equality case. Suppose $K=\widetilde{B} \cup Z$ is the union of a closed ball $\widetilde{B} \subset \Rn$ and a set $Z \subset \Rn$ of inner $(n-1)$-capacity zero. Then
\[
E_{\widetilde{B}}(t)=E_{\widetilde{B} \cup Z}(t)=E_K(t) , \qquad t \in (0,\infty] ,
\]
by \autoref{le:Z}. At $t=\infty$ this equation says that $V_{n-1}(\widetilde{B})=V_{n-1}(K)$, and so the ball $\widetilde{B}$ necessarily has the same radius as $B$. Hence $E_B(t)=E_K(t)$ for all $t \in (0,\infty]$ and so \eqref{eq:t-energy-ineq} holds with equality for all $t$.

Suppose from now on that $K$ does not equal the union of a ball in $\Rn$ with a set of inner $(n-1)$-capacity zero. Let $x_K=\int_K x \,d\mu_{\infty}(x)$ be the centroid of the $(n-1)$-equilibrium measure $\mu_\infty$ on $K\subset \Rn$. A moment comparison theorem \cite[Theorem 1.2]{CL24} yields that the second moment of the $(n-1)$-equilibrium measure on the set $K-x_K$ is strictly greater than the second moment of the $(n-1)$-equilibrium measure on the ball $B\subset \Rn$, so that
\[
\int_K |x-x_K|^2 \, d\mu_\infty > \int_B |x|^2 \, d\nu_\infty
\]
where $\nu_\infty$ is the $(n-1)$-equilibrium measure on $B$. (Strictness holds in this comparison inequality thanks to the assumption that $K$ is not the union of a ball with a set of inner $(n-1)$-capacity zero.) The centroid for the ball is the origin, $x_B=0$, by symmetry. Applying this strict moment inequality to the strip energy asymptotic for $t \to \infty$ in \cite[Corollary 3.2]{CL26} (as usual taking $q=n-1$ there), we deduce from the assumption $E_K(\infty)=E_B(\infty)$ that
\begin{equation}\label{eq:t-energy-ineq2}
E_K(t) < E_B(t) \qquad \text{for all large $t$.}
\end{equation}

If inequality \eqref{eq:t-energy-ineq2} holds for all $t>0$ then \eqref{eq:t-energy-ineq} is proved and we are done. Suppose instead that \eqref{eq:t-energy-ineq2} fails for some $t$. We will deduce a contradiction. Writing $t_0$ for the largest $t$ at which it fails, we have $E_K(t_0)=E_B(t_0)$ and $E_K(t) < E_B(t)$ for all $t>t_0$. Hence $E_K^{{\,}\prime}(t_0)\leq E_B^{{\,}\prime}(t_0)$. Recall here that $E_K(t)$ is continuous and in fact continuously differentiable, by \cite[Theorem 3.4]{CL26}.

To obtain the desired contradiction with the help of \autoref{conj:neumann} we will show, after changing $t_0$ to $t$ for notational simplicity, that
\[
E_K(t)=E_B(t) \quad \Longrightarrow \quad E_K^{{\,}\prime}(t) > E_B^{{\,}\prime}(t) .
\]

Write $U_t$ and $V_t$ for the equilibrium potentials of the sets $K$ and $B$ for the energies $E_K(t)$ and $E_B(t)$, respectively, on the strip $S(t)$. We rescale them to functions $U$ and $V$ on the strip $S=S(1)$ by defining
\[
U(\xh) = \frac{t^{n-1}}{c_{n-2}} \, U_t(t \xh) , \qquad V(\xh) = \frac{t^{n-1}}{c_{n-2}} \, V_t(t \xh) ,
\]
for $\xh \in S$ when $n \geq 3$, and defining
\[
U(\xh) = t \, U_t(t \xh) + \log t , \qquad V(\xh) = t \, V_t(t \xh) + \log t ,
\]
for $\xh \in S$ when $n=2$. Define a constant
\[
E=
\begin{cases}
(t^{n-1}/c_{n-2}) E_K(t) , & n \geq 3 , \\
t E_K(t) + \log t , & n = 2 .
\end{cases}
\]
One could equally well use $E_B(t)$ on the right side of this definition, since we are assuming in this part of the argument that $E_K(t)=E_B(t)$. Note that $E>0$ when $n \geq 3$.

We claim $U$ and $V$ satisfy the hypotheses of \autoref{conj:neumann}, for the sets $K/t$ and $B/t$. Indeed, by \autoref{th:upperu} we know $U$ is superharmonic on $S$ and harmonic on $S \setminus (K/t)$ with $U=E$ on $K/t$ except perhaps on a subset of inner $(n-1)$-capacity zero, and $\partial U/\partial \nu = 0$ on $\partial S$ due to the Neumann condition \eqref{eq:kernelneumann} satisfied by the kernel. As $|x|=r \to \infty$, the decay estimates on $U_t$ in \autoref{pr:potentialdecay} imply that
\[
U(x,z) =
\begin{cases}
\dfrac{1}{r^{n-2}} + O \! \left( \dfrac{1}{r^{n-1}} \right) & \text{if $n \geq 3$,} \\[1em]
  \log \dfrac{1}{r} + O \! \left( \dfrac{1}{r} \right) & \text{if $n=2$,}
\end{cases}
\]
and that the gradient satisfies
\[
\nabla U(x,z) = - \frac{b_n}{r^{n-1}} \, (x/r,0) + O\!\left( \frac{1}{r^{n}}\right) ,
\]
where $b_2=1$ and $b_n=n-2$ for $n \geq 3$. The function $V$ satisfies the same estimates as $r \to \infty$, and is superharmonic on $S$ and harmonic on $S \setminus (B/t)$ with $V=E$ on $B/t$ and $\partial V/\partial \nu = 0$ on $\partial S$.

Since \autoref{conj:neumann} is assumed to hold true, we obtain from it that $\int_S (\partial_z U)^2\,dxdz > \int_S (\partial_z V)^2 \, dxdz$, where the inequality is strict because $K$ does not equal the union of a ball in $\Rn$ with a set of inner $(n-1)$-capacity zero. The inequality rescales back to the original strip in terms of the original potentials to give that
\[
\int_{S(t)} (\partial_z U_t)^2\,dxdz > \int_{S(t)} (\partial_z V_t)^2 \, dxdz .
\]
Hence the energy derivative formula in \autoref{EKderivDirichlet} says that
\[
\big( t E_K(t) \big)^\prime > \big( t E_B(t) \big)^\prime
\]
at this particular $t$-value. Since $E_K(t)=E_B(t)$, it follows that $E_K^{{\,}\prime}(t) > E_B^{{\,}\prime}(t)$, providing the desired contradiction and hence completing the proof of \autoref{th:main}. \qed

The proof above assumes \autoref{conj:neumann}. While that conjecture remains open, we prove some closely related properties in the next section.

\section{\texorpdfstring{\bf Extremality of the ball for $L^p$-norms in the strip}{Extremality of the ball for Lp-norms in the strip}}
\label{sec:starfninequalities}
The comparison results in this section provide a certain moral support to \autoref{conj:neumann}, where one wants to compare the $L^2$ norms of the $z$-derivatives of the potentials.
We show that if $K$ and $B$ have the same energy in a strip $S(t)$, then the potential associated with $B$ is larger, in a suitably integrated sense, than the potential associated with $K$. The precise statements appear below in \autoref{th:Jvustar} and \autoref{co:integralmeans}. Also, \autoref{co:vsymmdecr} deduces the intuitively natural property that the potential for $B$ is symmetric decreasing on each horizontal slice.

We start by defining the symmetrization of sets and functions. Given a measurable set $M$ in $\Rn$, write $M^\#$ for the open ball centered at the origin having the same measure as $M$. In particular, if $M$ has measure zero then $M^\#$ is the empty set, and if $M$ has infinite measure then $M^\#=\Rn$.

Let $f$ be a Lebesgue measurable, extended-real valued function on $\Rnp$ whose superlevel sets have finite measure on almost every horizontal slice, meaning that for almost every $z \in \R$ one has
\begin{equation} \label{eq:superlevel}
|\{ f(\cdot,z)>\lambda \}| < \infty \qquad \text{for all $\lambda > \essinf f$.}
\end{equation}
The symmetric decreasing rearrangement of $f$ with respect to the horizontal variables is then defined by
\[
f^\#(x,z) = \sup \left\{ \lambda : x \in \{ f(\cdot,z) > \lambda \}^\# \right\} , \qquad x \in \Rn ,
\]
where this definition makes sense for almost every $z \in \R$. Following the terminology of Baernstein \cite[Chapter 6]{B19}, we call $f^\#$ the $(n,n+1)$-Steiner symmetrization of $f$.

The key facts are that, for almost every $z$, the rearrangement $x \mapsto f^\#(x,z)$ is equidistributed with $x \mapsto f(x,z)$, is radially symmetric (constant on each sphere $|x| = r$), and is decreasing and right continuous on rays in $\Rn$ that emanate from the origin; see \cite[Proposition 1.30]{B19}. Also, $f^\#$ is measurable on $\Rnp$ by \cite[pp.{\,}183-184]{B19}; that material assumes $f \geq 0$, but the nonnegativity assumption is not used in proving measurability.

The next lemma verifies conditions on strip potentials that will allow us to apply symmetrization.
\begin{lemma}[Superlevel set integrability for strip potentials] \label{le:superlevel}
Suppose $K$ is a compact nonempty subset of the strip $S(t)\subset \Rnp$, where $n\geq 2$ and $t>0$ are fixed. Let $\mu$ be a probability measure on $K$ and write $U$ for the potential of $\mu$ with respect to the kernel $G_t$ on the strip. Then $U$ satisfies the superlevel set condition \eqref{eq:superlevel} for every $z \in [-t,t]$ and furthermore the following integrals of $U$ over superlevel sets in $\Rnp$ are finite:
\begin{equation} \label{eq:BS} 
\int_{S(t)} (U(\xh)-\lambda)_+ \, d\xh < \infty
\end{equation}
for all $\lambda>0$ (when $n \geq 3$) or all $\lambda>-\infty$ (when $n=2$).
\end{lemma}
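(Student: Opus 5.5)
The plan is to split the strip into a bounded cylinder containing $K$ and its complement, and to use the decay of $U$ from \autoref{pr:potentialdecay} at infinity together with the local integrability of the kernel near $K$. Fix $R_0$ so large that $K \subset S(R_0,t)=\{|x|<R_0,\ |z|<t\}$ and so that the asymptotics of \autoref{pr:potentialdecay} hold uniformly for $|x|\ge R_0$ and $z\in[-t,t]$. On the exterior region $|x| \geq R_0$, $U$ is continuous (indeed smooth, since $K$ lies in the cylinder) and satisfies
\[
U(x,z) = \frac{c_{n-2}}{t}\frac{1}{|x|^{n-2}} + O\!\left(\frac{1}{|x|^{n-1}}\right)\ (n\ge3), \qquad U(x,z) = \frac{1}{t}\log\frac{1}{|x|} + O\!\left(\frac{1}{|x|}\right)\ (n=2).
\]
Hence for each $\lambda>0$ when $n\geq3$ (respectively each real $\lambda$ when $n=2$), there is $R_\lambda \geq R_0$ with $U(x,z) < \lambda$ whenever $|x|\geq R_\lambda$, uniformly in $z$. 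Consequently $\{U(\cdot,z)>\lambda\}\subset\{|x|<R_\lambda\}$ for every $z\in[-t,t]$. This bounded set has finite measure, which gives the superlevel condition \eqref{eq:superlevel}. When $n\ge3$ we have $U>0$, so $\operatorname{ess\,inf} U(\cdot,z)\ge 0$ and the condition is needed only for $\lambda>0$; when $n=2$, $U\to-\infty$, so the infimum is $-\infty$ and every real $\lambda$ must be handled, which the logarithmic decay provides. For the slice-wise statement one should note that superlevel sets are determined pointwise where $U$ is defined (possibly $+\infty$ on $K$), and that they sit inside a ball regardless.

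For \eqref{eq:BS}, the integrand $(U-\lambda)_+$ vanishes outside the bounded cylinder $S(R_\lambda,t)$, so it suffices to show $U_+$ is integrable on that cylinder. Near $K$, write $U=N+H$ as in \autoref{sec:dirichlet3} and \autoref{sec:dirichlet2}. Here $H$ is smooth on $\overline{S(t)}$, hence bounded on the compact closure of the cylinder. For $N$, the potential of $\mu$ against $|\xh-\yh|^{-(n-1)}$, Tonelli gives
\[
\int_{S(R_\lambda,t)} N\,d\xh = \int_K \int_{S(R_\lambda,t)} |\xh-\yh|^{-(n-1)}\,d\xh\,d\mu(\yh) \le \sup_{\yh \in K}\int_{|\xh-\yh|<D} |\xh-\yh|^{-(n-1)}\,d\xh < \infty,
\]
with $D$ the diameter of the cylinder, since the exponent $n-1$ is less than the dimension $n+1$. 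When $n=2$ the renormalized kernel \eqref{eq:rieszkernelone} has the same form, a Newtonian singular part plus a remainder that is bounded on bounded sets (locally uniform convergence of the series away from the diagonal, and the $j\ne0$ terms are smooth on the strip). Therefore $(U-\lambda)_+\le N+|H|+|\lambda|$ is integrable on the cylinder.

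The main point requiring care is the claim that $H$ (equivalently $G_t-|\xh-\yh|^{-(n-1)}$) is bounded on bounded subsets of $\overline{S(t)}\times K$, including near the boundary $z=\pm t$. There the reflected image $\rho_{\pm1}(\yh)$ could approach $\xh$ only if $\yh$ were near the boundary. This does not happen because $K$ is compact inside the open strip, so $\dist(K,\partial S(t))>0$ and all images $\rho_j(\yh)$ with $j\neq0$ stay a positive distance from $\overline{S(t)}$. Making this, together with the uniform convergence of the tails, explicit is the only nonroutine step.
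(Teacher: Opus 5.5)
Your proof is correct and follows the paper's approach: the uniform decay in \autoref{pr:potentialdecay} confines every superlevel set $\{U(\cdot,z)>\lambda\}$ to a fixed ball, which gives \eqref{eq:superlevel} and reduces \eqref{eq:BS} to integrability over a bounded cylinder. You go slightly beyond the paper's terse proof by checking local integrability of $U$ near $K$ via $U=N+H$ and Tonelli, which is needed for a general probability measure $\mu$; you should also note, as the paper does, that $U$ is lower semicontinuous, so its superlevel sets are open and hence measurable.
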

\begin{proof}
First, $U$ is lower semicontinuous and hence is measurable on the horizontal slice at height $z$. When $n \geq 3$ the potential is positive and $U(x,z)$ decays to zero as $|x| \to \infty$ according to \autoref{pr:potentialdecay}, uniformly with respect to $z$. Hence \eqref{eq:superlevel} holds with $\essinf U = 0$. When $n=2$, the potential tends to $-\infty$ as $|x| \to \infty$ according again to \autoref{pr:potentialdecay}, and so \eqref{eq:superlevel} holds with $\essinf U = -\infty$. In each case, the behavior of $U$ at infinity justifies inequality \eqref{eq:BS}.
\end{proof}
Our comparison theorem will utilize the operator
\[
(Jf)(r,z) = \int_{\B^n(r)} f(x,z) \, dx , \qquad r \geq 0, \quad z \in \R ,
\]
which integrates over an $n$-dimensional ball centered in the slice at height $z$. This $Jf$ is a function on $\R_+ \times \R$, where $\R_+ = (0,\infty)$. The $J$ operator applied to $f^\#$ gives
\[
J(f^\#)(r,z) = \int_{\B^n(r)} f^\#(x,z) \, dx .
\]
For our potentials, a useful equivalent characterization is that
\begin{equation} \label{eq:starcharacterization}
J(U^\#)(r,z) = \sup \left\{ \int_A U(x,z) \, dx : A \subset \Rn \text{ is bounded and } |A|=|\B^n(r)| \right\} ,
\end{equation}
or in other words, that $J(U^\#)=U^\s$ in the star-function notation of Baernstein \cite[Chapter 9]{B19}. For this characterization, see \cite[(9.85)]{B19}, which ultimately relies on \cite[Proposition 9.2(b)]{B19}.

One cannot generally restrict attention to bounded sets $A$ in characterizing $J(f^\#)$, but we may do so in \eqref{eq:starcharacterization} because $U(x,z)$ is greater than $\essinf U$ and tends to that value as $|x| \to \infty$, so that points near infinity cannot contribute to sets that approach the supremum in \eqref{eq:starcharacterization}.

We arrive next at the main result of the section. For notational simplicity, we break with the convention used in the rest of the paper and employ lowercase letters for the equilibrium potentials, writing
\[
u = U_t \qquad \text{and} \qquad v=V_t
\]
for the equilibrium potentials of the sets $K$ and $B$, respectively, on the strip $S(t)$. The set $K$ is assumed to lie in the hyperplane $\Rn$.
\begin{theorem}[Comparison of $Jv$ and $J(u^\#)$] \label{th:Jvustar}
Let $K$ be a compact subset of $\Rn, n \geq 2$, that has positive $(n-1)$-capacity. Fix $t>0$, and suppose $B \subset \Rn$ is a closed ball centered at the origin having the same energy as $K$ in the strip $S(t) \subset \Rnp$, meaning $E_K(t)=E_B(t) < \infty$. If $u$ and $v$ are the equilibrium potentials associated with these energies of $K$ and $B$, respectively, then
\[
J(u^\#) \leq Jv
 \]
 for all $r \geq 0, z \in [-t,t]$.
\end{theorem}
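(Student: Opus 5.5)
Following Baernstein's star-function method, I would show that $w = J(u^\#) - Jv$ is subharmonic-like in the half-strip $\{(r,z): r>0,\ -t<z<t\}$ wherever $w>0$, and then derive a contradiction from a maximum principle. First, the structure of $v$. Since $B$ is a centered ball and $K=B$ is one admissible case, uniqueness of the equilibrium measure gives that $v$ is radial in $x$. I would then check that $v$ is symmetric decreasing on each slice, either directly or as a consequence of the theorem applied with $K=B$. Granting this, $Jv = J(v^\#)$, and the claim becomes a comparison of star functions: $u^\s \le v^\s$.

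Second, the differential inequalities. Away from $K$, $u$ is harmonic, so $\Delta_x u + \partial_z^2 u = 0$. The standard star-function lemma (Baernstein, Chapter 9) says that if $u$ is subharmonic in $x$ plus the $z$-direction, then $u^\s(r,z)$ satisfies
\[
\partial_r^2 u^\s - \tfrac{n-1}{r}\partial_r u^\s + \partial_z^2 u^\s \ge 0
\]
in the weak sense, at least where the optimal set $A$ avoids the singular set. The correct operator is the one for which $J$ of a radial harmonic function is annihilated. On $K$, $u$ is superharmonic, so a different argument is needed there. For $v$ the same operator gives equality off $B\times\{0\}$. The Neumann conditions give $\partial_z u^\s = \partial_z Jv = 0$ at $z=\pm t$, which is handled by even reflection across $z=\pm t$ so that interior points are used. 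At $r=0$ both sides vanish. As $r\to\infty$ both tend to the total mass, suitably renormalized: when $n\ge3$, $u^\s \to \int u(\cdot,z)\,dx$ over $\Rn$, which may diverge, so one compares the differences $J(u^\#)-Jv$ using the common asymptotics of \autoref{pr:potentialdecay} (same leading terms, since both measures are probability measures) and obtains $w\to 0$ or at least $w$ bounded above by $o(1)$. For $n=2$, the logarithmic terms also cancel.

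Third, the maximum principle. Suppose $\sup w>0$, attained at some point $(r_0,z_0)$ after the boundary analysis. If $z_0\ne 0$, or more generally if the optimal set for $u$ stays away from $K$, the subharmonicity of $w$ contradicts a strict interior maximum via the strong maximum principle. The delicate case is $z_0=0$, where the superlevel sets touch $K$. Here the equal-energy hypothesis enters. On $K$ we have $u=E_K(t)$ quasi-everywhere, and $v=E_B(t)$ on $B$ with $E_K(t)=E_B(t)$. We also have $u\le E_K(t)$ everywhere (\autoref{th:upperu}), so on the slice $z=0$ the profile $u^\#(\cdot,0)$ is at most the constant $E$, and $v^\#(\cdot,0)=E$ on $B$. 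Hence for $r\le$ radius$(B)$, $J(u^\#)(r,0)\le E|\B^n(r)| = Jv(r,0)$. For $r$ beyond the radius of $B$, the optimal set for $v$ contains $B$, so the slice $z=0$ lies in the harmonic region for $v$ while $u^\s$ remains subharmonic, and the maximum principle applies.

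I expect the main obstacle to be this last step, namely justifying the subharmonicity of $u^\s$ across the hyperplane $z=0$ at radii where the optimal sets meet $K$, where $u$ is only superharmonic. My first attempt would be to show that when $r\ge r_K:=(|K|/|\B^n|)^{1/n}$, optimal sets at $z=0$ contain $K$ a.e. (because $u$ attains its maximum there), so the Laplacian mass $\mu_t$ is fully enclosed and contributes a constant to $\partial_z$ jumps that matches the jump of $Jv$ (both have total mass $1$). For $r<r_K$, I would instead use the direct bound by $E|\B^n(r)|$ together with radius$(B)\ge r_K$, which I would get from comparing energies via Pólya–Szegő symmetrization.
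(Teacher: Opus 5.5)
Your overall plan matches the paper's: compare $J(u^\#)$ with $Jv$ through a $\deltastar$ maximum principle, remove the Neumann condition by reflection or periodization, and use $u\le E_K(t)=E_B(t)=v$ directly on $[0,\beta]\times\{0\}$, where $\beta$ is the radius of $B$. The gap is the step that carries the proof: the subsolution inequality for $J(u^\#)$ across $z=0$, where $-a_n\Delta u=\mu$ is a singular measure.

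Knowing that optimal sets \emph{on the single slice $z=0$} contain $K$ when $r\ge r_K$ does not give a distributional inequality for $\deltastar J(u^\#)$ in a neighbourhood of $(r,0)$, because the optimal sets change for $z\ne0$. Your third paragraph is also wrong as stated. For $r>\beta$, $Jv$ is not $\deltastar$-harmonic near $z=0$, since $-a_n\deltastar Jv$ has a unit line mass there. Nor is $J(u^\#)$ subharmonic there: it inherits the downward kink of $u$ across the charged set. Only the difference has the right sign.

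The missing tool is Baernstein's pre-subharmonicity theorem for $(n,n+1)$-Steiner symmetrization with measure data. Sweep the periodized measure to the axis, $\widetilde\mu^\#=\sum_j\delta_{(0,2tj)}$. Then, against nonnegative test functions,
\[
-a_n\deltastar J(u^\#)\le J(\widetilde\mu^\#),
\]
that is, at most a unit line density along the \emph{whole} line $z=2tj$, for every $r>0$, with no information about optimal sets needed. By the commutation relation, $-a_n\deltastar Jv=J\widetilde\nu$ holds exactly, with line density $\nu(\{|y|\le r\})$, which equals $1$ for $r\ge\beta$. Hence $Jv-J(u^\#)$ is a $\deltastar$-supersolution on the half-plane with the intervals $(0,\beta]\times\{2tj\}$ removed. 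Concretely, if $\psi$ vanishes on $[0,\beta]\times\{0\}$, then $J^T\psi$ is constant on $B\times\{0\}$, and $\delta_{(0,0)}$ and $\nu$ both have mass $1$ there.

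So the threshold comes only from $v$'s side, and your radius $r_K$ and the symmetrization claim $\beta\ge r_K$ are unnecessary. Likewise drop $v=v^\#$: you can work with $Jv$ directly, and deriving $v=v^\#$ from the theorem with $K=B$ would be circular.

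There is a second gap at infinity. The common leading asymptotics give only $u^\#-v=O(r^{-(n-1)})$, or $O(1/r)$ when $n=2$. Integrating over $\B^n(r)$ then gives $J(u^\#)-Jv=O(r)$, not $o(1)$. So the supremum of $w$ need not be attained. Even if it were, the strong maximum principle for a non-strict subsolution leaves the constant case open.

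The paper handles both issues with $\phi_\e=Jv-J(u^\#)+\e(r^n+r)$:
\begin{itemize}
\item $\deltastar r^n=0$, and $r^n$ dominates the $O(r)$ growth (or the crude $O(r^2)$ bound when $n\ge3$), so $\phi_\e\to\infty$ as $r\to\infty$.
\item $\deltastar r=-(n-1)/r<0$ makes $\phi_\e$ a strict supersolution off the removed intervals.
\end{itemize}
Therefore the minimum of $\phi_\e$ lies either on $r=0$, where $\phi_\e=0$, or on an interval $[0,\beta]\times\{2tj\}$, where $\phi_\e\ge0$ because $u\le E=v$ a.e.\ on $B$. Letting $\e\to0$ finishes the proof.
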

Before proceeding to prove the theorem, we pause to develop two corollaries that are not needed elsewhere and yet help shed light on the properties of $v$.
\begin{corollary}[$v$ is symmetric decreasing on each slice] \label{co:vsymmdecr}
Fix $t>0$, let $B \subset \Rn$ be a closed ball centered at the origin and write $v$ for the equilibrium potential associated with the energy $E_B(t)$ on the strip $S(t) \subset \Rnp$. Then $v=v^\#$, so that $v$ is symmetric decreasing on each horizontal slice.
\end{corollary}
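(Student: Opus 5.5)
The natural route is to apply \autoref{th:Jvustar} with $K=B$. The ball $B$ has positive $(n-1)$-capacity and $E_B(t)<\infty$, and it trivially has the same strip energy as itself. So the theorem, applied to the pair $(K,B)=(B,B)$ with $u=v$, gives
\[
J(v^\#)(r,z) \leq Jv(r,z) \qquad \text{for all } r \geq 0,\ z \in [-t,t].
\]
In the other direction, the characterization \eqref{eq:starcharacterization} shows that $J(v^\#)(r,z)$ is the supremum of $\int_A v(x,z)\,dx$ over bounded $A$ with $|A|=|\B^n(r)|$. Choosing $A=\B^n(r)$ gives $J(v^\#) \geq Jv$. (\autoref{le:superlevel} ensures the rearrangement is defined on every slice.) Hence
\[
J(v^\#)(r,z)=Jv(r,z) \qquad \text{for every } r \geq 0 \text{ and every slice } z.
\]

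Next we upgrade this equality of ball integrals to $v=v^\#$. For each $r$, the ball $\B^n(r)$ attains the supremum in \eqref{eq:starcharacterization}. By the bathtub principle, this forces $\B^n(r)$ to be, up to null sets, a superlevel set of $v(\cdot,z)$ sandwiched between $\{v>\lambda\}$ and $\{v\geq\lambda\}$, where $\lambda=v^\#(\cdot,z)$ on the sphere of radius $r$. Equivalently, on this slice, $v \geq \lambda$ a.e.\ on $\B^n(r)$ and $v\leq\lambda$ a.e.\ off it. Since this holds for every $r$, $v(\cdot,z)$ agrees a.e.\ with a radial, radially decreasing function, which must be $v^\#(\cdot,z)$ by equimeasurability.

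A cleaner way to reach the same conclusion is to differentiate in $r$. Since $v$ is continuous off $B$ (smooth there) and $v^\#$ is radial, the identity $Jv = J(v^\#)$ for all $r$ gives equal spherical averages. Combining this with the rearrangement inequality for the Hardy--Littlewood type majorization yields equality a.e.

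Finally we pass from a.e.\ equality to pointwise equality, which I expect to be the main obstacle. Off $B$, $v$ is smooth. On the slice $z=0$ we also have $v=E_B(t)$ on $B$ except on a set of capacity zero, and superharmonic lower semicontinuity handles those exceptional points. So, for each slice, $v$ is a.e.\ equal to the radially decreasing right-continuous function $v^\#$, and continuity of $v$ off $B\times\{0\}$ upgrades this to everywhere equality there. On $B$ itself, both functions equal the constant $E_B(t)$; potential exceptional points can be ruled out since for the ball the potential is continuous. Alternatively, one can note that the statement is only meaningful almost everywhere and interpret it that way.
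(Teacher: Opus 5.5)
Your opening step is exactly the paper's: apply \autoref{th:Jvustar} with $K=B$, and get the reverse inequality by taking $A=\B^n(r)$ in \eqref{eq:starcharacterization}.

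From $Jv=J(v^\#)$ you take a different but valid route. The paper uses that $v(\cdot,z)$ is radial, from rotational symmetry of $B$ and uniqueness of the equilibrium measure, together with continuity for $z\neq 0$, and then differentiates in $r$. Your bathtub argument gives $v(\cdot,z)=v^\#(\cdot,z)$ a.e.\ on each slice without needing radial symmetry at all. Your upgrade via continuity also works: approach $x$ from larger radii and use right-continuity of $v^\#$. This gives pointwise equality on every slice $z\neq0$, and on $\{|x|>\beta\}$ in the slice $z=0$, where $\beta$ is the radius of $B$. Your ``cleaner'' differentiation variant, however, is incomplete as written. Equal spherical averages say nothing pointwise unless you already know $v(\cdot,z)$ is radial, and that is exactly the ingredient the paper supplies.

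The genuine gap is the final step, on the slice $z=0$ over $B$.
\begin{itemize}
\item \autoref{th:upperu} gives $v(x,0)=E_B(t)$ only off an exceptional set $Z_t$ of inner capacity zero. Since $B$ has empty interior in $\Rnp$, nothing confines $Z_t$ to the rim.
\item Lower semicontinuity only gives $v(x_0,0)\le\liminf_{y\to x_0}v(y,0)$. That is the wrong direction for excluding $v(x_0,0)<E_B(t)$.
\item The claim that the ball's potential is continuous is exactly what has not been established; it would need a Wiener-type regularity argument for the flat disk.
\item Retreating to an a.e.\ statement proves less than the corollary asserts.
\item The rim $|x|=\beta$ is also unresolved. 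Even your claim that $v^\#(\cdot,0)=E_B(t)$ there depends on continuity of $v$ at the rim.
\end{itemize}

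The missing idea, which the paper uses, is to reach $z=0$ from the neighbouring slices. For each fixed $x$, $v(x,z)\to v(x,0)$ as $z\to0$. This follows by monotone convergence on the singular term $|(x,z)-(y,0)|^{-(n-1)}$, which increases as $|z|\downarrow0$; the image terms of $G_t$ are continuous near $z=0$.

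You have already shown that $v(\cdot,z)$ is radially decreasing for every $z\neq0$. Hence the pointwise limit $v(\cdot,0)$ is radial and radially decreasing at every point, exceptional or not. Lower semicontinuity then forces right-continuity in $r$, so $v(\cdot,0)=v^\#(\cdot,0)$ everywhere. As a byproduct, $v(\cdot,0)\ge E_B(t)$ on the open ball, so $Z_t$ can only lie on the rim.
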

\begin{proof}[Proof of \autoref{co:vsymmdecr}]
Taking $K=B$ in \autoref{th:Jvustar} yields that $J(v^\#) \leq Jv$. The reverse inequality $J(v^\#) \geq Jv$ always holds, by choosing $A=\B^n(r)$ in the characterization \eqref{eq:starcharacterization}. Hence $Jv = J(v^\#)$. Since $v$ is radially symmetric on each horizontal slice (by rotational symmetry of $B$ and uniqueness of the equilibrium measure), and $v$ is continuous when $z \neq 0$, it follows by differentiating $Jv$ and $J(v^\#)$ with respect to $r$ that $v=v^\#$ when $z \neq 0$. Thus $v(x,z)$ is symmetric decreasing as a function of $x$ on each slice with $z \neq 0$.

Now consider $z=0$. We know $v$ is continuous outside $B$ but we do not know a priori about continuity on $B$. Observe that for each $x \in \Rn$ and $(y,0) \in B$, the Newtonian kernel $|(x,z)-(y,0)|^{-(n-1)}$ increases monotonically as $z \to 0$ from either above or below, and so by applying monotone convergence to the Newtonian (singular) part of the kernel $G_t$ we find that $v(x,z) \to v(x,0)$ as $z \to 0$. Since $v(x,z)$ is radially symmetric decreasing when $z \neq 0$, so must be $v(x,0)$. Any decreasing function of the radial variable that is lower semicontinuous is necessarily right-continuous. Hence $v=v^\#$.

Incidentally, $v(x,0)=E_B(t)$ for all $(x,0) \in B \setminus Z_t$, by \autoref{th:upperu} applied with $K=B$. That exceptional set $Z_t$ has inner capacity zero and hence $n$-dimensional Lebesgue measure zero (\autoref{le:Zmzero}), and so the fact that $v(x,0)$ is symmetric decreasing forces $v$ to equal the constant $E_B(t)$ everywhere on $B$, except perhaps at the outer radius of the ball.
\end{proof}
\begin{corollary}[Integral means comparison] \label{co:integralmeans}
Under the assumptions of \autoref{th:Jvustar}, if $\Phi : \R \to [0,\infty)$ is convex and increasing then
\begin{equation} \label{eq:Phiintegrals}
\int_\Rn \Phi(u(x,z)) \, dx \leq \int_\Rn \Phi(v(x,z)) \, dx
\end{equation}
for each $z \in [-t,t]$. In particular, when $n \geq 3$ we have
\begin{equation} \label{eq:Lp}
\int_\Rn u(x,z)^p \, dx \leq \int_\Rn v(x,z)^p \, dx , \qquad p \in \left( \frac{n}{n-2},\infty \right) ,
\end{equation}
for each $z \in [-t,t]$.
\end{corollary}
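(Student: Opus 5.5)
The plan is to derive \eqref{eq:Phiintegrals} from \autoref{th:Jvustar} by the standard majorization principle for decreasing rearrangements (Hardy--Littlewood--P\'{o}lya / Karamata), applied slice by slice. Fix $z \in [-t,t]$. By equimeasurability of $u^\#(\cdot,z)$ with $u(\cdot,z)$ we have
\[
\int_\Rn \Phi(u(x,z))\,dx = \int_\Rn \Phi(u^\#(x,z))\,dx .
\]
By \autoref{co:vsymmdecr}, $v = v^\#$. So it suffices to compare two radially symmetric decreasing functions $f = u^\#(\cdot,z)$ and $g = v(\cdot,z)$ satisfying $Jf(r) \le Jg(r)$ for all $r\ge 0$. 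In the radial variable $s = |\B^n(r)|$ this is the statement that the decreasing functions $f^*(s), g^*(s)$ on $(0,\infty)$ satisfy $\int_0^s f^* \le \int_0^s g^*$ for all $s$.

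Next I would reduce to the functions $\Phi_\lambda(y) = (y-\lambda)_+$. A convex increasing $\Phi \ge 0$ can be written, on the range of interest, as $\Phi(y) = \Phi(-\infty) + \int (y-\lambda)_+\, d\Phi'(\lambda)$ with $\Phi' \ge 0$ and $d\Phi'\ge0$. Here $\Phi(-\infty) = \lim_{y\to-\infty}\Phi(y) \ge 0$ exists by monotonicity. If $\Phi(-\infty) > 0$, both sides of \eqref{eq:Phiintegrals} are $+\infty$ (the integrands tend to $\Phi(\essinf)$ at infinity, and for $n\geq3$ we have $\essinf = 0$, so one should subtract $\Phi(0)$; if $\Phi(0)>0$ both sides diverge). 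The inequality is then trivial, so assume $\Phi$ vanishes at the infimum of the potentials. By Fubini/monotone convergence it then suffices to prove
\[
\int_\Rn (u(x,z)-\lambda)_+\,dx \le \int_\Rn (v(x,z)-\lambda)_+\,dx
\]
for each $\lambda$ (with $\lambda>0$ when $n\ge3$), these integrals being finite by the decay in \autoref{pr:potentialdecay}.

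For that key inequality, I would use the identity
\[
\int (g^*-\lambda)_+\,ds = \sup_{s\ge0}\Big(\int_0^s g^*\,d\sigma - \lambda s\Big),
\]
with the supremum attained where $g^*$ crosses $\lambda$. This holds because $g^*$ is decreasing. The analogous identity holds for $f^*$, and the supremum for $f^*$ is at most that for $g^*$ termwise, since $\int_0^s f^*\le\int_0^s g^*$. This gives the claim, and \eqref{eq:Phiintegrals} follows.

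The $L^p$ statement \eqref{eq:Lp} is the case $\Phi(y)=(y_+)^p$, which is convex increasing and nonnegative for $p\ge1$. Since $u,v>0$ when $n\geq3$, $\Phi(u) = u^p$. Finiteness of $\int v^p$ requires $v^p \sim r^{-(n-2)p}$ to be integrable at infinity, which is exactly the condition $p > n/(n-2)$; local integrability near $K$ uses local boundedness from \autoref{th:upperu}. The main point of care is the reduction step: the behaviour of $\Phi$ near $\essinf$ and interchanging the $\lambda$-integral, which needs the monotone convergence and finiteness remarks above. The majorization itself is routine once \autoref{th:Jvustar} is available.
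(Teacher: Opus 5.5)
Your proposal is correct and is essentially the paper's argument. Equimeasurability and the Stieltjes representation reduce matters to $\Phi(y)=(y-\kappa)_+$, with the degenerate cases (including $\kappa\le 0$ when $n\ge 3$) giving $+\infty\le+\infty$. Then you evaluate $\int_{\Rn}(u^\#-\kappa)_+\,dx$ on the ball where $u^\#$ exceeds $\kappa$ and apply $J(u^\#)\le Jv$. The only difference is that your appeal to \autoref{co:vsymmdecr} to write the right side as a supremum is unnecessary: $\int_{\B^n(r)}(v-\kappa)\,dx\le\int_{\Rn}(v-\kappa)_+\,dx$ holds for any $v$, and that is how the paper concludes.
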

The $\Phi$ integrals in the corollary might both equal $+\infty$, in which case the inequality between them would still be true, although uninformative.
\begin{proof}[Proof of \autoref{co:integralmeans}]
By equimeasurability of $u$ and $u^\#$ on each slice, the goal \eqref{eq:Phiintegrals} is equivalent to having
\begin{equation} \label{eq:Phiintegralssharp}
\int_\Rn \Phi(u^\#(x,z)) \, dx \leq \int_\Rn \Phi(v(x,z)) \, dx
\end{equation}
for each $z \in [-t,t]$. This inequality follows from $J(u^\#) \leq Jv$ by a standard majorization technique, but we do not know a reference that quite fits the current situation, in which $x \mapsto u(x,z)$ is nonintegrable on $\Rn$ (decaying only like $|x|^{-(n-2)}$ at infinity, when $n \geq 3$). For that reason, we now present the majorization argument.

First, using monotone convergence it suffices to prove \eqref{eq:Phiintegralssharp} under the additional assumption that $\Phi(\lambda)=0$ for all $\lambda$ sufficiently large negative. For such a $\Phi$ one has the Lebesgue--Stieltjes representation $\Phi(\lambda) = \int_\R (\lambda-\kappa)_+ \, d(\Phi^\prime(\kappa-))$, noting that the function $\Phi^\prime(\kappa-)$ is increasing by convexity of $\Phi$.
Thus we need only prove \eqref{eq:Phiintegralssharp} for functions of the form $\Phi(\lambda)=(\lambda-\kappa)_+$, with $\kappa \in \R$ fixed. When $n \geq 3$, we may further suppose $\kappa > 0$ since both sides of inequality \eqref{eq:Phiintegralssharp} equal $+\infty$ when $\kappa \leq 0$, by nonintegrability of $x \mapsto u(x,z)$ on $\Rn$. Now, given $z \in [-t,t]$, we choose $r \geq 0$ to be the smallest value such that $u^\#(x,z) \leq \kappa$ when $|x|=r$. Then
\begin{align*}
\int_\Rn (u^\#(x,z)-\kappa)_+ \, dx
& = \int_{\B^n(r)} (u^\#(x,z)-\kappa) \, dx \\
& \leq \int_{\B^n(r)} (v(x,z)-\kappa) \, dx && \text{since $J(u^\#) \leq Jv$ by \autoref{th:Jvustar}} \\
& \leq \int_\Rn (v(x,z)-\kappa)_+ \, dx ,
\end{align*}
which is inequality \eqref{eq:Phiintegralssharp} for $\Phi(\lambda)=(\lambda-\kappa)_+$.

Note that the assumption $p>n/(n-2)$ and the decay rate for the potential in \autoref{pr:potentialdecay} ensure that the $L^p$ norms in \eqref{eq:Lp} are in fact finite when $n \geq 3$.
\end{proof}

In the remainder of this section we develop tools to help prove \autoref{th:Jvustar}. The proof is then given in the next section.

\subsection*{Commutation relation}
The adjoint of $J$ is defined by
\begin{equation} \label{eq:JTis}
(J^T \psi)(x,z) = \int_{|x|}^\infty \psi(r,z)\,dr , \qquad x \in \Rn , \quad z \in \R ,
\end{equation}
which indeed operates as an $L^2$-adjoint since
\begin{equation} \label{Baernstein9.86}
\int_\R \int_{\R_+} Ju(r,z) \, \psi(r,z) \,dr dz = \int_\R \int_\Rn u(x,z) \, J^T \psi(x,z)\, dxdz
\end{equation}
by Fubini, whenever $u \in L^1_{loc}(\Rnp)$ and $\psi \in C_c(\R_+ \times \R)$.

We borrow this $J$ and $J^T$ notation from Baernstein \cite[Chapter 10]{B19}, whose approach to comparison theorems for partial differential equations emphasizes commutation relations such as in the following lemma.

Define a differential operator $\deltastar$ acting on functions $\phi(r,z)$ by
\begin{equation} \label{eq:Jmoment2}
\begin{split}
\deltastar \phi
& = r^{n-1} \partial_r \left( r^{1-n} \partial_r \phi \right) + \partial_{zz} \phi \\
& = \partial_{rr} \phi - \frac{n-1}{r} \partial_r \phi + \partial_{zz} \phi ,
\end{split}
\end{equation}
which has formal adjoint
\[
\deltastart \psi = \partial_{rr} \psi + \partial_r \! \left( \frac{n-1}{r} \psi \right) + \partial_{zz} \psi .
\]

\begin{lemma}[Commutation relation; Baernstein \protect{\cite[formula (9.90)]{B19}}] \label{le:commute}
The $J$ operator ``commutes'' with the Laplacian $\Delta$ on $\Rnp$, meaning that
\[
\deltastar J f = J \Delta f
\]
whenever $f \in C^2(\Rnp)$. More generally, if $f \in L^1_{loc}(\Rnp)$ then the commutation relation holds in the sense of distributions:
\begin{equation} \label{eq:distributioncommutation}
\int_\R \int_{\R_+} (\deltastart \psi) Jf \, dr dz = \int_\R \int_\Rn (\Delta J^T \psi) f \, dxdz
\end{equation}
for all test functions $\psi \in C_c^2(\R_+ \times \R)$.
\end{lemma}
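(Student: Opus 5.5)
The smooth case is a direct computation, and the distributional case will then follow from it by duality using the adjoint identity \eqref{Baernstein9.86}.

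\textbf{Smooth case.} For $f \in C^2(\Rnp)$, write $Jf(r,z)=\int_{\B^n(r)} f(x,z)\,dx$. Differentiation in $z$ passes under the integral, so
\[
\partial_{zz} Jf = J(\partial_{zz} f).
\]
For the radial part, the divergence theorem on each slice gives
\[
\partial_r Jf(r,z)=\int_{|x|=r} f \, d\sigma .
\]
Writing this as $r^{n-1}\int_{\mathbb{S}^{n-1}} f(r\theta,z)\,d\sigma(\theta)$, we get
\[
r^{1-n}\partial_r Jf=\int_{\mathbb{S}^{n-1}} f(r\theta,z)\,d\sigma(\theta),
\qquad
\partial_r\bigl(r^{1-n}\partial_r Jf\bigr)=\int_{\mathbb{S}^{n-1}} \partial_r f(r\theta,z)\,d\sigma(\theta).
\]
Multiplying by $r^{n-1}$ turns the last expression into $\int_{|x|=r}\partial_\nu f\,d\sigma$. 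By the divergence theorem again this equals $\int_{\B^n(r)}\Delta_x f\,dx = J(\Delta_x f)$. Adding the two pieces yields $\deltastar Jf = J\Delta f$.

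\textbf{Distributional case.} Take $f \in L^1_{loc}(\Rnp)$ and $\psi \in C_c^2(\R_+\times\R)$. The aim is to move everything onto $\psi$ via the adjoint $J^T$. The key step is the dual identity
\[
\Delta J^T\psi = J^T(\deltastart\psi) \qquad \text{pointwise on } \Rnp .
\]
Once this holds, \eqref{Baernstein9.86} applied with the test function $\deltastart\psi$ gives
\[
\int\!\!\int (\deltastart\psi)\,Jf\,dr\,dz=\int\!\!\int f\,J^T(\deltastart\psi)\,dx\,dz=\int\!\!\int f\,\Delta J^T\psi\,dx\,dz,
\]
which is \eqref{eq:distributioncommutation}. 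Here $\deltastart\psi$ is only $C_c$, and \eqref{Baernstein9.86} was stated exactly for $C_c$ test functions, so this is fine.

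\textbf{Verifying the dual identity.} Since $\psi$ vanishes for $r$ near $0$, the function
\[
g(x,z)=J^T\psi(x,z)=\int_{|x|}^\infty\psi(s,z)\,ds
\]
is constant in $x$ near $x=0$, hence smooth there. It is also $C^2$ elsewhere and compactly supported in $z$. The radial derivatives are
\[
\partial_r g=-\psi(r,z), \qquad \Delta_x g=\partial_{rr}g+\frac{n-1}{r}\partial_r g=-\partial_r\psi-\frac{n-1}{r}\psi .
\]
On the other side, using $\psi(\infty,z)=0$,
\[
J^T\bigl(\partial_{rr}\psi+\partial_r(\tfrac{n-1}{r}\psi)\bigr)=-\partial_r\psi(|x|,z)-\frac{n-1}{|x|}\psi(|x|,z).
\]
The two expressions agree, and the $\partial_{zz}$ terms match trivially, which proves the identity.

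\textbf{Main obstacle.} The only delicate points are checking that $J^T\psi$ is genuinely $C^2$ across $x=0$, which the support condition handles, and the sign and boundary bookkeeping in the radial computation. Both are routine.
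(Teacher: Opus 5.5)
Your proof is correct and has the same structure as the paper's: establish the pointwise adjoint relation $J^T(\deltastart\psi)=\Delta(J^T\psi)$, using that $\psi$ vanishes near $r=0$ so that $J^T\psi$ is $C^2$ and compactly supported, and then apply \eqref{Baernstein9.86} with the $C_c$ test function $\deltastart\psi$. The only difference is in how the pieces are obtained: you verify both the smooth commutation relation and the adjoint relation by direct radial calculus, whereas the paper cites the smooth case and deduces the adjoint relation from it by duality (moving the operators onto $\psi$ and using the arbitrariness of $f$), so your route is slightly more self-contained; one small correction is that $\partial_r Jf=\int_{|x|=r} f\,d\sigma$ follows from polar coordinates, not from the divergence theorem.
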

\begin{proof}
For the commutation relation $\deltastar J f = J \Delta f$ when $f$ is $C^2$-smooth on $\Rnp$, see Baernstein \cite[p.{\,}352]{B19}. Or else for a more detailed proof, consult \cite[Lemma 5.1]{CL24}.

The proof below for the non-smooth, distributional version of the formula follows the argument for \cite[(9.90)]{B19}, but spares the reader from having to trace back various formulas through that chapter.

Note that an adjoint commutation relation
\begin{equation} \label{eq:adjointcommutation}
J^T(\deltastart \psi) = \Delta(J^T \psi)
\end{equation}
holds for $\psi \in C^2_c(\R_+ \times \R)$; to prove this formula, multiply the smooth commutation relation by $\psi$, integrate over $\R_+ \times \R$, and move each operator from $f$ onto $\psi$ by means of the adjoint operators, and finally use the arbitrariness of $f \in C^2(\Rnp)$ to obtain \eqref{eq:adjointcommutation} pointwise. In this argument, $J^T \psi(x,z)$ is obviously $C^2$-smooth away from the $z$-axis (that is, for $x \neq 0$), and is also $C^2$-smooth on a neighborhood of of the $z$-axis, because $\psi(r,z)$ vanishes for all sufficiently small $r$ and so $J^T \psi(x,z)$ is independent of $x$ for all sufficiently small $|x|$. Thus it is permissible to take the Laplacian of $J^T \psi$.

Applying the $J$-adjoint relation to the left side of the distributional commutation relation \eqref{eq:distributioncommutation} and then using \eqref{eq:adjointcommutation} yields the right side of \eqref{eq:distributioncommutation}, thus finishing the proof of the lemma.
\end{proof}

\subsection*{\texorpdfstring{Behavior of $u^\#$ near infinity}{Behavior of the rearranged potential near infinity}}
In the build-up to proving \autoref{th:Jvustar}, next we show that the symmetric decreasing rearrangement of the potential on each horizontal plane satisfies the same decay estimates as the potential itself.

\begin{proposition}[Rearranged potential decay at infinity] \label{pr:rearrangedecay}
Suppose $K$ is a nonempty compact subset of the strip $S(t)\subset \Rnp$, where $n\geq 2$ and $t>0$ are fixed. Let $\mu$ be a probability measure on $K$ and write $U$ for the potential of $\mu$. Then as $r=|x| \to \infty$, the symmetric decreasing rearrangement of the potential on horizontal slices decays according to
\[
U^\#(x,z) =
\begin{cases}
\dfrac{c_{n-2}}{t} \dfrac{1}{r^{n-2}} + O \! \left( \dfrac{1}{r^{n-1}} \right) & \text{when $n \geq 3$,} \\[1em]
 \dfrac{1}{t} \log \dfrac{1}{r} + O \! \left( \dfrac{1}{r} \right) & \text{when $n=2$,}
\end{cases}
\]
where the remainder estimates are uniform with respect to $z \in [-t,t]$.
\end{proposition}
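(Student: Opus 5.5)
The plan is to pass the asymptotic expansion of \autoref{pr:potentialdecay} through the rearrangement, using only two properties: $U(\cdot,z)$ is squeezed between two radial profiles near infinity, and the rearrangement is monotone and preserves distribution functions. Write $\phi(r)$ for the leading term, namely $\phi(r)=\frac{c_{n-2}}{t}r^{-(n-2)}$ when $n\geq 3$ and $\phi(r)=\frac1t\log\frac1r$ when $n=2$. By \autoref{pr:potentialdecay} there are constants $C,R_0$, independent of $z\in[-t,t]$, such that
\[
\phi(|x|)-C|x|^{-(n-1)} \leq U(x,z) \leq \phi(|x|)+C|x|^{-(n-1)}, \qquad |x|\geq R_0 .
\]
On the bounded set $|x|<R_0$ the slice function $U(\cdot,z)$ may be unbounded above near $K$, but it is bounded below uniformly in $z$. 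Indeed, $U$ is lower semicontinuous on the compact set $\{|x|\leq R_0,\ |z|\leq t\}$ (for $n=2$ the kernel $G_t$ is bounded below there, as noted in the definition). Call this lower bound $-M$.

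Fix $z$ and a large $r$, and set $\lambda=U^\#(x,z)$ for $|x|=r$. Equimeasurability and right continuity give $|\{U(\cdot,z)>\lambda\}|\leq|\B^n(r)|$ and $|\{U(\cdot,z)\geq\lambda\}|\geq|\B^n(r)|$ in the usual sense. Concretely, $\lambda$ is the largest level whose strict superlevel set has measure at most $|\B^n(r)|$. I will bound $\lambda$ from both sides by comparing with superlevel sets of the two radial envelopes.

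For the upper bound, let $\lambda_+ = \phi(r-\delta)+C(r-\delta)^{-(n-1)}$ with $\delta$ to be chosen. The set $\{U(\cdot,z)>\lambda_+\}$ contains no point with $|x|\geq r-\delta$, provided $r-\delta\geq R_0$ and the upper envelope is decreasing there, which holds for large $r$. Hence this set lies in $\B^n(r-\delta)$ and has measure strictly below $|\B^n(r)|$, so $\lambda\leq\lambda_+$. Taking $\delta=0$, or any $\delta$ of fixed size, the bound becomes $\phi(r)+O(r^{-(n-1)})$, because $\phi'(r)=O(r^{-(n-1)})$.

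For the lower bound, let $\lambda_-=\phi(r+\delta)-C(r+\delta)^{-(n-1)}$. For large $r$ this level lies below $-M$ when $n=2$, and when $n\geq 3$ the nonnegativity of $U$ makes the small-ball region harmless once it is compared with $\lambda_-$. The set $\{U(\cdot,z)>\lambda_-\}$ then contains the annulus $R_0\leq|x|<r+\delta$, together with every point of $\B^n(R_0)$ at which $U>\lambda_-$. This is the step I expect to need the most care. When $n\geq3$ we have $\lambda_->0$, and points of $\B^n(R_0)$ could have $U\leq\lambda_-$. I handle this by enlarging the radius: take $\delta$ fixed with $|\B^n(r+\delta)\setminus\B^n(R_0)|\geq|\B^n(r)|$, which holds once $(r+\delta)^n-R_0^n\geq r^n$, for instance $\delta=R_0$. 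Then the superlevel set has measure at least $|\B^n(r)|$, so $\lambda\geq\lambda_-$. For $n=2$, $U>-M>\lambda_-$ on $\B^n(R_0)$ anyway, and the same $\delta$ works.

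Since $\delta$ is a fixed constant, the mean value theorem gives $\phi(r\pm\delta)=\phi(r)+O(r^{-(n-1)})$. Combined with the two envelope bounds, this yields $U^\#(x,z)=\phi(r)+O(r^{-(n-1)})$. All constants ($C,R_0,M,\delta$) are independent of $z$, so the estimate is uniform in $z\in[-t,t]$.
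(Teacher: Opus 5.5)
Your argument is correct, but the mechanism differs from the paper's. The paper builds global comparison functions $\ell\le U\le h$ whose rearrangements it computes explicitly for large $r$: they coincide there with the radial envelopes $\phi(r)\mp d\,r^{-(n-1)}$. It then uses the fact that rearrangement preserves order. To make $\ell$ a global minorant, it sets $\ell=\alpha$ for $|x|\le R_2$, where $\alpha$ is a lower bound for $U$ on the compact cylinder. That bound comes from lower semicontinuity, together with positivity when $n\ge3$.

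You instead estimate the single level $U^\#(x,z)$ directly through superlevel-set measures. For the lower bound you discard $\B^n(R_0)$ altogether and pay with the fixed radial shift $\delta=R_0$. This costs only $O(r^{-(n-1)})$, because $\phi'(r)=O(r^{-(n-1)})$. Your route therefore needs no information about $U$ on the bounded region. In fact the constant $M$ is superfluous even for $n=2$, since the annulus $R_0\le|x|<r+R_0$ alone already has measure exceeding $|\B^n(r)|$. The price is a slightly larger remainder constant, whereas the paper keeps the same constant $d$ as in \autoref{pr:potentialdecay}.

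There are three small points to tighten.

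First, the claim that $\{U(\cdot,z)>\lambda_-\}$ contains the annulus $R_0\le|x|<r+\delta$ needs $\phi(s)-Cs^{-(n-1)}>\lambda_-$ for every $s\in[R_0,r+\delta)$. The natural way to guarantee this is for the lower envelope to be decreasing on $[R_0,\infty)$, but that fails for small $s$.
\begin{itemize}
\item When $n\ge3$, the envelope decreases only for $s>\frac{(n-1)Ct}{(n-2)c_{n-2}}$, and it is even negative for $s<Ct/c_{n-2}$, while $\lambda_->0$.
\item When $n=2$, it decreases only for $s>Ct$.
\end{itemize}
Simply enlarge $R_0$, since the two-sided estimate persists on any smaller neighbourhood of infinity. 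This is exactly the role of $R_2$ in the paper.

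Second, with the paper's definition via open balls $M^\#$, $U^\#(x,z)$ is the \emph{smallest} (not largest) level whose strict superlevel set has measure at most $|\B^n(r)|$.

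Third, the inference $U^\#(x,z)\ge\lambda_-$ requires $|\{U(\cdot,z)>\lambda_-\}|>|\B^n(r)|$ with strict inequality. Your choice $\delta=R_0$ does supply this, because $(r+R_0)^n-R_0^n>r^n$.
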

\begin{proof}
First, consider the case $n \geq 3$. From \autoref{pr:potentialdecay}, there are $d,R_1>0$ such that
\[
\dfrac{c_{n-2}}{t} \dfrac{1}{r^{n-2}} - \dfrac{d}{r^{n-1}}
\leq U(x,z) \leq \dfrac{c_{n-2}}{t} \dfrac{1}{r^{n-2}} + \dfrac{d}{r^{n-1}}
\]
for all $r=|x|>R_1$ and $z\in[-t,t]$. We will prove that the same bounds hold also for $U^\#$, for sufficiently large $r$. The strategy is to construct lower and upper bounding functions whose rearrangements have suitable decay. More precisely, we will construct functions $\ell$ and $h$ such that
\[
0 < \ell(x,z) \leq U(x,z) \leq h(x,z)
\]
for all $x,z$, and
\[
\ell^\#(x,z)=\dfrac{c_{n-2}}{t} \dfrac{1}{r^{n-2}} - \dfrac{d}{r^{n-1}}, \qquad h^\#(x,z) = \dfrac{c_{n-2}}{t} \dfrac{1}{r^{n-2}} + \dfrac{d}{r^{n-1}}
\]
for sufficiently large $r=|x|$. Rearrangement preserves order, and so
\[
\ell^\#(x,z) \leq U^\#(x,z) \leq h^\#(x,z),
\]
which gives the desired estimates for $U^\#$. See \autoref{fig:rearranged-decay} for an illustration of the bounding functions $\ell$ and $h$ and their rearrangements.

\begin{figure}
\begin{tikzpicture}[scale=1,declare function = {
f(\t)=2.3/(\t)^2-0.5/(\t)^3+(0.09/(\t^1.7))*sin(900*\t); 
g(\t)=0.9+2.5*(1-\t)*sin(300*\t); 
h(\t)= 1.07+(0.07/(\t))*sin(900*\t-700);
}]
\draw [->] (-0.2,0) -- (6,0) node[right] {$|x|$};
\draw [->] (0,-0.2) -- (0,3);
\draw[domain=0:1.07,variable=\t,samples=100, blue]
  plot (\t,{g(\t)});
\draw[domain=1.12:1.39, variable=\t,blue]
  plot ({(1.02*\t-0.06)},{h(\t)});
\draw[domain=1.37:6,variable=\t,samples=100,blue]
  plot ({(\t-0.01)},{f(\t)});
\draw[domain=0:1.2,variable=\t,samples=150]
  plot (\t,{max(2.1,g(\t))});
\draw[domain=1.2:6,variable=\t,samples=100]
  plot (\t,{2.2/(\t)^2+1/(\t)^3});
\draw [-] (0,0.45) -- (1.5,0.45);
\draw[domain=1.5:6,variable=\t,samples=90]
  plot (\t,{2.2/(\t)^2-1/(\t)^3});
\draw[densely dotted] (1.5,0.45) -- (1.5,0.68);

\draw (3,2.5) node {$\ell \leq {\color{blue}U}\leq h$};
\end{tikzpicture}
\begin{tikzpicture}[scale=1,declare function = {
f(\t)=2.3/(\t)^2-0.5/(\t)^3+(0.09/(\t^1.7))*sin(900*\t); 
g(\t)=0.9+2.5*(1-\t)*sin(300*\t); 
h(\t)= 1.07+(0.07/(\t))*sin(900*\t-700);
k(\t)= ((3/(\t)^2+(0.09/(\t^1.8))*sin(900*\t)))*0.75;
}]
\draw [->] (-0.2,0) -- (6,0) node[right] {$|x|$};
\draw [->] (0,-0.2) -- (0,3);
\draw[blue,smooth] (0,2.7) .. controls (0.1,2.65) .. (0.3,2.1) -- (0.56,1.14) .. controls (0.7,1.12).. (0.9, 0.99) -- (1.15, 0.8) .. controls (1.4,0.79) .. (1.96,0.56);
\draw[domain=2:6, variable=\t, blue]
  plot ({(\t-0.06)},{k(\t)});
\draw (0,2.7) .. controls (0.1,2.65) .. (0.3,2.1);
\draw (0.3, 2.1) -- (1.2,2.1);
\draw[domain=1.2:6,variable=\t,samples=100]
  plot (\t,{2.2/(\t)^2+1/(\t)^3});
\draw[domain=1.5:1.93,variable=\t,samples=90]
  plot ({(\t-1.5)},{2.2/(\t)^2-1/(\t)^3});
\draw [-] (0.43,0.45) -- (1.93,0.45);
\draw[domain=1.93:6,variable=\t,samples=90]
  plot (\t,{2.2/(\t)^2-1/(\t)^3});
  
\draw (3,2.5) node {$\ell^\#\leq {\color{blue}U^\# }\leq h^\#$};
\end{tikzpicture}

\caption{Qualitative illustration of the graphs of the lower and higher functions $\ell$ and $h$ and their rearrangements, showing cross-sections at some fixed $z$-value.} \label{fig:rearranged-decay}
\end{figure}

 For the upper bound define
\[
h(x,z)=
\begin{cases}
\max\left(U(x,z), \dfrac{c_{n-2}}{t} \dfrac{1}{R_1^{n-2}} + \dfrac{d}{R_1^{n-1}} \right), & |x| \leq R_1 \\
 \dfrac{c_{n-2}}{t} \dfrac{1}{|x|^{n-2}} + \dfrac{d}{|x|^{n-1}}, & |x| > R_1.
\end{cases}
\]
This function satisfies $U(x,z)\leq h(x,z)$ for all $x,z$ and so $U^\#(x,z)\leq h^\#(x,z)$. The rearrangement $h^\#(x,z)$ is well defined because $h$ is positive and vanishes as $|x|\to\infty$. Moreover, for $|x| > R_1$ we have $h^\#(x,z)= \frac{c_{n-2}}{t} \frac{1}{r^{n-2}} + \frac{d}{r^{n-1}}$ because that expression is decreasing.

Next we construct the lower bound $\ell$. Note that
\[
\dfrac{c_{n-2}}{t} \dfrac{1}{r^{n-2}} - \dfrac{d}{r^{n-1}}
\]
is positive decreasing for $r>\frac{d (n-1) t}{c_{n-2}(n-2)}$, and approaches $0$ as $r\to \infty$. Let $R_2=\max(R_1,\frac{d (n-1) t}{c_{n-2}(n-2)})$. The potential $U$ is positive and lower semicontinuous on the compact set $\{(x,z) \, : \, |x|\leq R_2, |z|\leq t\}$, and so there is a positive constant $\alpha$ such that $U(x,z) \geq \alpha$ on that set.
The function
\[
\ell(x,z)=
\begin{cases}
\alpha, & |x| \leq R_2, \\
\dfrac{c_{n-2}}{t} \dfrac{1}{|x|^{n-2}} - \dfrac{d}{|x|^{n-1}} , & |x| >R_2,
\end{cases}
\]
satisfies $0<\ell(x,z) \leq U(x,z)$ for all $(x,z)$, and so also $\ell^\# (x,z) \leq U^\#(x,z)$. Moreover, $\ell^\#(x,z)= \frac{c_{n-2}}{t} \frac{1}{r^{n-2}} - \frac{d}{r^{n-1}}$ for all $r$ large enough that the expression is less than $\alpha$. That finishes the proof when $n>2$.

The case $n=2$ follows the same line of argument, with only minor adjustments. The main difference is that the potentials and bounds approach $-\infty$ instead of $0$ as $|x|\to \infty$. There are $d,R_1>0$ such that
\[
\dfrac{1}{t}\log \dfrac{1}{|x|} - \dfrac{d}{|x|}
\leq U(x,z) \leq \dfrac{1}{t}\log \dfrac{1}{|x|} + \dfrac{d}{|x|}
\]
for all $r=|x|>R_1$ and $z\in[-t,t]$. The upper estimate $U^\# \leq h$ follows as above after making the analogous choice for $h$:
\[
h(x,z)=
\begin{cases}
\max\left(U(x,z), \dfrac{1}{t}\log \dfrac{1}{R_1} + \dfrac{d}{R_1} \right), & |x| \leq R_1 \\
 \dfrac{1}{t}\log \dfrac{1}{|x|} + \dfrac{d}{|x|}, & |x| > R_1.
\end{cases}
\]
For the lower bound, one notes that there is a radius $R_2\geq R_1$ such that $\dfrac{1}{t}\log \dfrac{1}{r} - \dfrac{d}{r}$ is decreasing for $r\geq R_2$ and approaches $-\infty$ as $r\to \infty$. The potential $U$ is bounded below by some $\alpha$ on the compact set $\{(x,z) \, : \, |x|\leq R_2, |z|\leq t\}$. The lower estimate $\ell \leq U^\#$ then follows as above with the following choice for $\ell$:
\[
\ell(x,z)=
\begin{cases}
\alpha, & |x| \leq R_2, \\
\dfrac{1}{t}\log \dfrac{1}{|x|} - \dfrac{d}{|x|}, & |x| >R_2.
\end{cases}
\]
\end{proof}

\section{\bf Proof of \autoref{th:Jvustar}}
\label{sec:proofofJvustar}

\subsection*{Overview and symmetrization literature}
The $\star$-function symmetrization technique of Baernstein \cite[Chapter 10]{B19} will be adapted to the current setting. His technique permits the right side of Poisson's equation $- a_n \Delta u = \mu$ to be a measure, which is crucial for our application because the equilibrium measure $\mu$ is supported in $K \subset \Rn$ by hypothesis and so is singular with respect to Lebesgue measure in $\Rnp$.

The symmetrization theorems of Talenti \cite{T76,T16}, Alvino, Lions and Trombetti \cite{ALT91} and other authors are based on level set decomposition methods, and as a result do not apply to singular measures.

In the literature, the closest result to what we need is perhaps \cite[Theorem 10.15]{B19}. It does not seem possible to apply that theorem directly, because when $n=2$ our $u$ and $v$ diverge logarithmically at infinity and so fail the nonnegativity hypothesis in the theorem. More seriously, the hypothesis $\mu^\star \leq J\nu$ of that theorem fails since our $\mu^\#$ (defined in Step 2 below) concentrates at radius $0$ while $\nu$ does not.

We surmount the latter obstacle in Step 2 by proving the subsolution property of the $\star$-function not on all of space but rather on a smaller domain $\mathcal{D}$ obtained by removing a periodic array of intervals that correspond to the original ball $B$. The ``missing'' intervals are then handled in Step 5 by using that $u \leq E_K(t) = E_B(t) = v$ on the ball $B$.

Symmetrization theorems generally require Dirichlet boundary conditions. The Neumann condition on the strip boundary is eliminated in the following proof by periodizing to obtain a problem on all of space. Neumann conditions have been handled in certain other situations in the literature. For example, on annular shells a symmetrization theorem for Poisson's equation is given in \cite[Theorem 10.20]{B19} by building on work of Langford \cite{L15a,L15b}. Robin boundary conditions have been investigated by Alvino, Nitsch and Trombetti \cite{ANT23} and Langford \cite{L23}.

\subsection*{Goal of the proof}
To prove \autoref{th:Jvustar} we must show $\phi \geq 0$, where
\[
\phi=Jv-J(u^\#) .
\]

\subsection*{\texorpdfstring{Step 1. $\phi$ is continuous}{Step 1. The comparison function is continuous}}
For continuity, we want to show
\[
\lim_{(r^\prime,z^\prime) \to (r,z)} \phi(r^\prime,z^\prime) = \phi(r,z)
\]
whenever $r \geq 0, z \in [-t,t]$. In view of the local Lipschitz continuity with respect to $r$ in \autoref{le:lipschitz} below, we may fix $r^\prime=r$ and simply show continuity in the $z$-variable. Thus it will suffice to prove
\begin{equation} \label{eq:Jusharptcont}
\lim_{z^\prime \to z} J(u^\#)(r,z^\prime) = J(u^\#)(r,z)
\end{equation}
and
\begin{equation} \label{eq:Jtvcont}
\lim_{z^\prime \to z} (Jv)(r,z^\prime) = (Jv)(r,z) .
\end{equation}
These limits are established in Steps 1a, 1b, 1c below.
\begin{lemma}[Lipschitz continuity of $Ju$ and $J(u^\#)$] \label{le:lipschitz} If $R>0$ then $r \mapsto (Ju)(r,z)$ and $r \mapsto J(u^\#)(r,z)$ are Lipschitz continuous for $r \in [0,R]$, with Lipschitz constant that may depend on $R$ but is independent of $z \in [-t,t]$.
\end{lemma}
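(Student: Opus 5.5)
The plan is to control the $r$-derivative of each function by a bound on the slice integrals over spheres. For $r \mapsto (Ju)(r,z) = \int_{\B^n(r)} u(x,z)\,dx$, polar coordinates give
\[
(Ju)(r_2,z)-(Ju)(r_1,z) = \int_{r_1}^{r_2} \int_{\partial \B^n(s)} u(x,z)\,d\sigma(x)\,ds ,
\]
so it suffices to show
\[
\sup_{s \le R,\, |z|\le t} \int_{\partial\B^n(s)} |u(x,z)|\,d\sigma(x) < \infty .
\]
For $J(u^\#)$ I would instead use the star-function characterization \eqref{eq:starcharacterization}: $J(u^\#)(r,z)$ is a supremum of integrals over sets of measure $|\B^n(r)|$, and enlarging or shrinking a set by measure $|\B^n(r_2)|-|\B^n(r_1)|$ changes the integral by at most the integral of $|u|$ over that small set. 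Equivalently, since $u^\#(\cdot,z)$ is radial and equimeasurable with $u(\cdot,z)$, the derivative of $J(u^\#)$ in $r$ is $|\partial\B^n(r)|\,u^\#(r,z)$. This is bounded provided $u^\#(\cdot,z)$ is bounded above uniformly in $z$ and bounded below on $\B^n(R)$ uniformly in $z$.

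\textbf{Lower bounds.} These are easy. $u$ is lower semicontinuous on the compact set $\{|x|\le R',|z|\le t\}$, so it is bounded below there. For $n\ge3$ we also have $u>0$. For $n=2$, \autoref{pr:potentialdecay} shows that $u$ tends to $-\infty$ only logarithmically. Hence on $\B^n(R)$ the rearrangement $u^\#$ is bounded below by the lower bound $\ell^\#$ built in \autoref{pr:rearrangedecay}, uniformly in $z$.

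\textbf{Upper bounds.} These are the main point. The key input is that the equilibrium potential is bounded above: by \autoref{th:upperu}, $u\le E_K(t)$ everywhere on the strip (or at least $u$ is locally bounded, as used in Step 1 of \autoref{sec:dirichlet2}). If \autoref{th:upperu} gives a global bound $u \le E_K(t)$, then $u^\# \le E_K(t)$ as well, and $|u|$ is bounded on $\B^n(R)\times[-t,t]$ by a constant $C_R$. The sphere integrals are then at most $C_R |\partial\B^n(s)|$, and the Lipschitz constant is $C_R\,|\partial \B^n(R)|$, independent of $z$.

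If instead only local boundedness were available, I would fall back on splitting $G_t$ into the Newtonian part and the smooth remainder $H_t$. The remainder is bounded on compacts. For the Newtonian part, I would integrate $\int_{\partial\B^n(s)}|(x,z)-\yh|^{-(n-1)}\,d\sigma(x)$, an $(n-1)$-dimensional integral of an $|\cdot|^{-(n-1)}$ singularity. That integral diverges logarithmically when $z=0$, which is why the global equilibrium bound is the route to rely on.

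\textbf{Main obstacle.} The main obstacle is securing an upper bound for $u$ that is uniform in $z$ near $z=0$, where $K$ sits. I expect the bound $u\le E_K(t)$ on all of $\overline{S(t)}$ (the maximum-principle part of \autoref{th:upperu}) to do this. Everything else is routine polar integration together with the decay estimates of \autoref{pr:potentialdecay} and \autoref{pr:rearrangedecay}.
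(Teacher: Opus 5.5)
Your proposal is correct and follows essentially the same route as the paper: you use the global bound $u \le E_K(t)$ from \autoref{th:upperu}, together with positivity ($n\ge 3$) or local lower boundedness ($n=2$), to bound $u$ and $u^\#$ on $\B^n(R)\times[-t,t]$ uniformly in $z$, and then integrate over the shell $\B^n(r_2)\setminus\B^n(r_1)$. The only difference is cosmetic: for $n=2$ you take the lower bound on $u^\#$ from $\ell^\#$ in \autoref{pr:rearrangedecay}, whereas the paper obtains it directly, since $u\ge -c$ on $\B^n(R)\times[-t,t]$ forces $u^\#\ge -c$ on $\B^n(R)$; both work.
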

\begin{proof}
When $n \geq 3$, the potential $u$ is bounded by \autoref{th:upperu}. Hence $u^\#$ is bounded too, from which Lipschitz continuity of $r \mapsto (Ju)(r,z)$ and $r \mapsto J(u^\#)(r,z)$ follows easily for $r$ in the interval $[0,R]$.

Now suppose $n=2$. The upper boundedness and local lower boundedness of $u$ in \autoref{th:upperu} implies that $u^\#$ is bounded above and is locally bounded below on $\overline{S(t)}$. Hence $r \mapsto (Ju)(r,z)$ and $r \mapsto J(u^\#)(r,z)$ are locally Lipschitz continuous, with constant that is independent of $z$.
\end{proof}

\subsection*{\texorpdfstring{Step 1a: $J(u^\#)$ is continuous when $z \neq 0$}{Step 1a: Continuity away from the central slice}}
To prove \eqref{eq:Jusharptcont} when $z$ is nonzero, observe that if $z,z^\prime \in (0,t]$ then for all bounded sets $A \subset \Rn$ with $|A|=|\B^n(r)|$ one has
\begin{align*}
\left| \int_A u(x,z) \, dx - \int_A u(x,z^\prime) \, dx \right|
& \leq |A| \lVert \partial_z u \rVert_{L^\infty(\Rn \times [z,z^\prime])} |z-z^\prime| .
\end{align*}
Here the $L^\infty$ norm is finite by smoothness of the potential away from $K$ and the gradient decay estimate in \autoref{pr:potentialdecay}. This inequality holds for all $A$, and so the characterization \eqref{eq:starcharacterization} implies that
\[
\left| J(u^\#)(r,z) - J(u^\#)(r,z^\prime) \right| \leq |\B^n(r)| \lVert \partial_z u \rVert_{L^\infty([z,z^\prime] \times \Rn)} |z-z^\prime| ,
\]
which tends to zero as $z^\prime \to z$. The same argument works when $z, z^\prime \in [-t,0)$. Hence \eqref{eq:Jusharptcont} holds as desired, when $z \neq 0$.

\subsection*{\texorpdfstring{Step 1b: $J(u^\#)$ is continuous at $z = 0$}{Step 1b: Continuity at the central slice}} In this case we want to show that
\begin{equation} \label{eq:Jusharptcont0}
\lim_{z \to 0} J(u^\#)(r,z) = J(u^\#)(r,0)
\end{equation}
for each $r>0$. (When $r=0$, both sides equal $0$.)

To start with, the pointwise convergence $u(x,0) = \lim_{z \to 0} u(x,z)$ holds for almost every $x \in \Rn$ since $u$ is continuous on $\overline{S(t)}$ except on a set $Z_t \subset K \subset \Rn$ that has $n$-dimensional Lebesgue measure zero, by \autoref{th:upperu} and \autoref{le:Zmzero}. Further, $u$ is locally bounded by \autoref{th:upperu}. Thus if $A \subset \Rn$ is bounded with $|A|=|\B^n(r)|$ then dominated convergence ensures that
\begin{align}
\int_A u(x,0) \, dx
& = \int_A \lim_{z \to 0} u(x,z) \, dx \notag \\
& = \lim_{z \to 0} \int_A u(x,z) \, dx \label{eq:continuityatzero}
\end{align}
and so
\[
\int_A u(x,0) \, dx \leq \liminf_{z \to 0} J(u^\#)(r,z)
\]
by the characterization \eqref{eq:starcharacterization}. Taking the supremum over all sets $A$, we deduce that
\[
J(u^\#)(r,0) \leq \liminf_{z \to 0} J(u^\#)(r,z) .
\]

Thus to establish \eqref{eq:Jusharptcont0}, we want an inequality in the reverse direction, namely
\begin{equation} \label{eq:Jusharptcont00}
\limsup_{z \to 0} J(u^\#)(r,z) \leq J(u^\#)(r,0) .
\end{equation}
Consider $z \in [-t,t]$. For $\e>0$, denote the $\e$-neighborhood of $K$ in $\Rn$ by
\[
K(\e) = \{ x \in \Rn : \dist(x,K) < \e \} ,
\]
and let ${\mathcal C}(\e) = K(\e) \setminus K$ be the $\e$-collar. Since $u \leq E_K(t)$ everywhere by \autoref{th:upperu}, with equality almost everywhere on $K$ with respect to $n$-dimensional Lebesgue measure, we see that
\begin{align*}
\int_A u(x,z) \, dx
& \leq \int_{A \setminus K(\e)} u(x,z) \, dx + \int_{A \cap K} u(x,0) \, dx + |A \cap {\mathcal C}(\e)| \, E_K(t) \\
& \leq \int_{A \setminus K(\e)} u(x,0) \, dx + \int_{A \cap K} u(x,0) \, dx + |{\mathcal C}(\e)| \, E_K(t) + |A| O_\e(|z|)
\end{align*}
where to estimate the first term we used that $u(x,z)=u(x,0)+O_\e(|z|)$ because the gradient $\nabla u$ on the closed set $(\Rn \setminus K(\e)) \times [-t,t]$ is smooth and bounded (the boundedness for large $x$ holding by \autoref{pr:potentialdecay}). Combining the two integrals on the right side therefore yields that
\begin{equation} \label{eq:alongtheway}
\int_A u(x,z) \, dx
\leq \int_{A \setminus {\mathcal C}(\e)} u(x,0) \, dx + |{\mathcal C}(\e)| \, E_K(t) + |A| O_\e(|z|) .
\end{equation}

When $n \geq 3$, the potential $u$ is nonnegative and so we may increase the integral on the right of \eqref{eq:alongtheway} to obtain
\[
\int_A u(x,z) \, dx
\leq \int_A u(x,0) \, dx + |{\mathcal C}(\e)| \, E_K(t) + |A| O_\e(|z|) .
\]
By taking the supremum with respect to the set $A$ in the characterization \eqref{eq:starcharacterization}, we deduce that
\[
J(u^\#)(r,z)
\leq J(u^\#)(r,0) + |{\mathcal C}(\e)| \, E_K(t) + |\B^n(r)| O_\e(|z|) .
\]
Letting $z \to 0$ and then $\e \to 0$ establishes \eqref{eq:Jusharptcont00}, since the collar ${\mathcal C}(\e)$ shrinks to the empty set.

When $n=2$ we must argue more carefully, since $u$ can take negative values. Require $\e$ to be small enough that $|{\mathcal C}(\e)| < |\B^n(r)|$. Write $r(A,\e)$ and $r(\e)$ for the radii such that
\begin{align*}
|\B^n(r(A,\e))| & = |A \setminus {\mathcal C}(\e)| , \\
|\B^n(r(\e))| & = |A| - |{\mathcal C}(\e)| = |\B^n(r)| - |{\mathcal C}(\e)| ,
\end{align*}
so that
\[
0 < r(\e) \leq r(A,\e) \leq r .
\]
Clearly $r(\e) \to r$ as $\e \to 0$, since the collar vanishes in the limit. Hence from \eqref{eq:alongtheway} and the characterization \eqref{eq:starcharacterization} we find
\begin{align*}
\int_A u(x,z) \, dx
& \leq J(u^\#)(r(A,\e),0) + |{\mathcal C}(\e)| \, E_K(t) + |\B^n(r)| O_\e(|z|) \\
& \leq J(u^\#)(r,0) + O(r-r(\e)) + |{\mathcal C}(\e)| \, E_K(t) + |\B^n(r)| O_\e(|z|)
\end{align*}
by the local Lipschitz continuity in \autoref{le:lipschitz}. The right side is now independent of $A$. On the left, taking the supremum with respect to $A$ and then letting $z \to 0$ gives that
\[
\limsup_{z \to 0} J(u^\#)(r,z) \leq J(u^\#)(r,0) + O(r-r(\e)) + |{\mathcal C}(\e)| \, E_K(t) .
\]
Letting $\e \to 0$ yields \eqref{eq:Jusharptcont00}, finishing the proof for Step 1b that $J(u^\#)$ is continuous at $z = 0$.

\subsection*{\texorpdfstring{Step 1c: $Jv$ is continuous}{Step 1c: Continuity of Jv}} $Jv(r,z)$ is continuous when $z \neq 0$, because $v(x,z)$ is continuous off the set $B$. For continuity of $Jv$ at $z=0$ we need only prove \eqref{eq:Jtvcont} with $z=0$, which follows by applying \eqref{eq:continuityatzero} with $u=v$ and $A=\B^n(r)$.

Steps 1a, 1b and 1c together establish the continuity of $\phi$ for Step 1.

\subsection*{\texorpdfstring{Step 2. $\phi$ is a $\deltastar$-supersolution}{Step 2. The comparison function is a supersolution}}
Let us extend the equilibrium potential $u$ and associated equilibrium measure $\mu$ to $\Rnp$ as in the proof of \autoref{th:upperu}. Recall $K \subset \Rn$ by hypothesis in \autoref{th:Jvustar} so that points in $K$ have the special form $\yh=(y,0)$. Thus on $K$, the transformations $\rho_j$ are simply the vertical translations $\rho_j(y,0)=(y,2tj)$. Hence the set $\widetilde{K}$ in the proof of \autoref{th:upperu} is obtained simply by periodizing $K$ in the $z$-direction:
\[
\widetilde{K} = \cup_{j \in \Z} \left( K + (0,2tj) \right) .
\]
Define the measure $\widetilde{\mu}$ to be the periodic extension of $\mu$ from $K$ to $\widetilde{K}$.

When $n \geq 3$, the extension of $u$ is
\begin{equation} \label{eq:fund}
u(\xh) = \int_{\widetilde{K}} \frac{1}{|\xh-\yh|^{n-1}} \, d\widetilde{\mu}(\yh) , \qquad \xh = (x,z) \in \Rnp ,
\end{equation}
by applying \eqref{eq:extendedpotential} to the current situation. Thus the extension of $u$ is the Newtonian potential of the periodically extended measure $\widetilde{\mu}$. Clearly $u$ is $2t$-periodic with respect to $z$.

When $n=2$, the extension formula for the potential is
\begin{equation}
u(\xh)
= \int_{\widetilde{K}} \left( \frac{1}{|\xh-\yh|} - \frac{1_{ \{ w \neq 0 \} }}{|w|} \right) d\widetilde{\mu}(\yh) + \frac{1}{t} (\gamma - \log(4t)) , \qquad \xh = (x,z) \in \R^3 , \label{eq:fund2}
\end{equation}
where as usual $\yh = (y,w)$. Note $w=2tj$ when $\yh$ lies in the $j$-th component of $\widetilde{K}$. One verifies straightforwardly that $u$ is $2t$-periodic with respect to $z$.

One similarly extends $v$ and its associated measure $\nu$ periodically in the $z$-direction, for each $n \geq 2$. The distributional Laplacian of a potential gives its associated measure and so \eqref{eq:fund} and \eqref{eq:fund2} imply that in all dimensions
\[
-a_n \Delta u = \widetilde{\mu} , \qquad -a_n \Delta v = \widetilde{\nu} .
\]

Next we show $\phi$ is a $\deltastar$-supersolution, for the operator $\deltastar$ that was defined in \eqref{eq:Jmoment2}. Note that $\phi=Jv-J(u^\#)$ is continuous on the right halfplane $\R_+ \times \R$, by Step 1 above.

For $v$, \autoref{le:commute} says that $\deltastar J v = J \Delta v$ in the sense of distributions. Thus formula \eqref{eq:distributioncommutation} yields that
\begin{equation} \label{eq:super1}
- a_n \int_\R \int_{\R_+} (\deltastart \psi) Jv \, dr dz = \int_\Rnp J^T \psi \, d\widetilde{\nu}
\end{equation}
for all test functions $\psi \in C_c^2(\R_+ \times \R)$, where on the right side of \eqref{eq:distributioncommutation} we used that $-a_n \Delta v = \widetilde{\nu}$ and that $J^T \psi$ is a valid test function, being $C^2$-smooth and compactly supported on $\Rnp$ (as follows from the definition \eqref{eq:JTis} of $J^T$).

To develop an analogous inequality for $u$, first we define a rearranged measure $\widetilde{\mu}^\#$ on $\Rnp$. Recall that $\widetilde{\mu}$ is supported in the copies of $K \subset \Rn$ translated vertically by integer multiples of $2t$. In particular, $\widetilde{\mu}$ is singular with respect to Lebesgue measure on $\Rnp$. Hence in accordance with Baernstein \cite[Definition 9.21]{B19}, we define $\widetilde{\mu}^\#$ by sweeping the $\widetilde{\mu}$-measure to the $z$-axis, resulting in a sum of delta measures at heights $2tj$ in $\Rnp$:
\[
\widetilde{\mu}^\# = \sum_{j \in \Z} \delta_{(0,2tj)} .
\]
The claimed analogue of \eqref{eq:super1} says that $-a_n\deltastar J(u^\#) \leq J(\widetilde{\mu}^\#)$ in the distributional sense, meaning that
\begin{equation} \label{eq:super2}
- a_n \int_\R \int_{\R_+} (\deltastart \psi) J(u^\#) \, dr dz \leq \int_\Rnp J^T \psi \, d\widetilde{\mu}^\#
\end{equation}
provided the test function $\psi \in C_c^2(\R_+ \times \R)$ is nonnegative. This claim is a special case of Baernstein's ``subharmonicity'' theorem \cite[Theorem 9.23]{B19}. To reduce the technical burden of reading that result, we prove \eqref{eq:super2} more directly as follows:
\begin{align}
- a_n \int_\R \int_{\R_+} (\deltastart \psi) J(u^\#) \, dr dz
& = - a_n \int_\Rnp (J^T \deltastart \psi) u^\# \, dx dz && \text{by \eqref{Baernstein9.86}} \notag \\
& = - a_n \int_\Rnp (\Delta J^T \psi) u^\# \, dx dz && \text{by \eqref{eq:adjointcommutation}} \notag \\
& \leq \int_\Rnp J^T \psi \, d\widetilde{\mu}^\# \label{eq:presubharmonicity}
\end{align}
by Baernstein's ``pre-subharmonicity'' theorem for $(n,n+1)$-Steiner symmetrization \cite[Theorem 9.22]{B19}, whose hypotheses we proceed to verify. When $n \geq 3$ the theorem applies as follows:
\begin{itemize}
\item take the domain to be $\Omega = \Rn \times (-s,s)$ with $s$ chosen large enough that the function $J^T \psi$ is supported in $\Omega$, and note that $\Omega^\# = \Omega$,
\item $u>0$ is lower semicontinuous on $\Omega$,
\item $u(x,z) \to 0$ whenever $z \to z_0 \in (-s,s)$ and $|x| \to \infty$,
\item $-a_n \Delta u = \widetilde{\mu}$ in $\Omega$ in the sense of distributions,
\item $\widetilde{\mu}$ is positive and singular with respect to Lebesgue measure in $\Omega$,
\item $\widetilde{\mu}(\Omega) < \infty$ since $s<\infty$.
\end{itemize}
When $n=2$ the pre-subharmonicity theorem can be applied after adding a suitable constant to $u$, as follows:
\begin{itemize}
\item choose $s$ large enough that the cylinder $\B^n(s) \times (-s,s)$ contains $K$ and the support of $J^T \psi$,
\item take the domain to be $\Omega = \{ (x,z) \in \Rn \times (-s,s) : u(x,z) > -c \}$,
\item choose $c$ large enough that $u>-c$ on the closure of the cylinder (remembering $u > -\infty$ is lower semicontinuous and so is bounded below on compact sets); hence $\Omega$ and $\Omega^\#$ contain the cylinder and thus contain $K$ and the support of $J^T \psi$,
\item $u+c>0$ is lower semicontinuous on $\Omega$,
\item $\Omega$ is bounded since $u(x,z) \to -\infty$ as $|x| \to \infty$ by \autoref{pr:potentialdecay} with $n=2$,
\item $u(x,z)+c = 0$ whenever $z \in (-s,s)$ and $(x,z) \in \partial \Omega$,
\item $-a_n \Delta (u+c) = \widetilde{\mu}$ in $\Omega$ in the sense of distributions,
\item $\widetilde{\mu}$ is positive and singular with respect to Lebesgue measure in $\Omega$,
\item $\widetilde{\mu}(\Omega) < \infty$ since $s<\infty$.
\end{itemize}
Using this set-up, one may readily verify the hypotheses of the pre-subharmonicity theorem \cite[Theorem 9.22]{B19} for the function $g=J^T \psi$, which is nonnegative and Steiner symmetric (radially symmetric decreasing on each slice) as required, because $\psi$ is assumed nonnegative. The conclusion of the pre-subharmonicity theorem then yields inequality \eqref{eq:presubharmonicity}.

Subtracting formulas \eqref{eq:super1} and \eqref{eq:super2} gives that
\begin{equation} \label{eq:phisupersolution}
a_n \int_\R \int_{\R_+} (\deltastart \psi) \phi \, dr dz \leq \int_\Rnp J^T \psi \, d(\widetilde{\mu}^\#-\widetilde{\nu})
\end{equation}
provided $\psi \geq 0$.

Now write $\beta$ for the radius of the ball $B$ in $\Rn$ and consider the domain
\[
{\mathcal D} = \big( \R_+ \times \R \big) \setminus \big( \cup_{j \in \Z} \, (0,\beta] \times \{ 2tj \} \big)
\]
that is obtained from the right halfplane by removing intervals of length $\beta$ at each height that is a multiple of $2t$. The domain ${\mathcal D}$ is illustrated in \autoref{fig:domainD}.

\begin{figure}
\begin{tikzpicture}
\def\radius{1.4}

\draw[->,gray!50] (0,0) -- (4.5,0);
\draw[->] (0,-2.5) -- (0,2.55);

\draw[-] (0,2) node[left] {$4t$}-- (\radius,2) ;
\draw[-] (0,1) node[left] {$2t$}-- (\radius,1) ;
\draw[-] (0,0) node[left] {$0$} -- (\radius,0);
\draw[-] (0,-1) node[left] {$-2t$}-- (\radius,-1);
\draw[-] (0,-2) node[left] {$-4t$}-- (\radius,-2);

\draw[-] (\radius,-0.1) node[below] {$\beta$} -- (\radius,0.1);

\draw (3.2,1.5) node {\large$\mathcal{D}$};
\end{tikzpicture}
\caption{The domain $\mathcal{D}$ obtained from the right halfplane by removing intervals of length $\beta$ at heights that are multiples of $2t$. \label{fig:domainD}}
\end{figure}

We claim $\phi$ is a $\deltastar$-supersolution on ${\mathcal D}$, meaning that
\[
\deltastar \phi \leq 0 \quad \text{on ${\mathcal D}$}
\]
in the distributional sense with respect to nonnegative test functions $\psi \in C^2_c({\mathcal D})$. In view of inequality \eqref{eq:phisupersolution} it suffices to show that
\[
\int_\Rnp J^T \psi \, d(\widetilde{\mu}^\#-\widetilde{\nu}) = 0
\]
for all $\psi \in C^2_c({\mathcal D})$. By periodicity and a partition of unity in the $z$-direction, we may suppose without loss of generality that the compactly supported function $\psi$ is supported in the subregion of ${\mathcal D}$ where $|z| < 2t$. In this subregion, the measures $\widetilde{\mu}^\#$ and $\widetilde{\nu}$ are supported at height $z=0$ in the ball $B$, with $\widetilde{\mu}$ there being just a delta measure $\delta_{(0,0)}$ at the origin and $\widetilde{\nu}$ being the equilibrium measure $\nu$ on the ball $B$. When $|x| \leq \beta$, the integrand $J^T \psi(x,0)$ is constant by definition of $J^T$ since $\psi$ equals zero on the interval $[0,\beta] \times \{ 0 \}$, which lies outside ${\mathcal D}$. Hence
\[
\int_\Rnp J^T \psi \, d(\widetilde{\mu}^\#-\widetilde{\nu}) = J^T \psi(0,0) (\delta_{(0,0)}(B)-\nu(B)) = 0
\]
as desired, because $\delta_{(0,0)}$ and $\nu$ each has total measure $1$ on $B$.

\subsection*{\texorpdfstring{Step 3. $\phi_\e$ grows at infinity and is a strict supersolution in ${\mathcal D}$}{Step 3. The perturbed comparison function is a strict supersolution}}
We perturb $\phi$ by defining
\[
\phi_\e(r,z)=\phi(r,z)+\e (r^n+r)
\]
for $\e>0$. This $\phi_\e$ grows to infinity as $r \to \infty$, as follows.

If $n \geq 3$ then $v$ and $u^\#$ decay like $O(1/r^{n-2})$, by \autoref{pr:potentialdecay} for $v$ and \autoref{pr:rearrangedecay} for $u^\#$. After integrating in spherical coordinates on the $n$-dimensional slice at height $z$, one sees that $Jv$ and $J(u^\#)$ are bounded by $O(r^2)$. That bound is dominated by $\e r^n$ since $n \geq 3$, implying that $\phi_\e(r,z) \to \infty$ as $r \to \infty$.

If $n=2$ then the difference $v-u^\#$ decays like $O(1/r)$, again by \autoref{pr:potentialdecay} for $v$ and \autoref{pr:rearrangedecay} for $u^\#$, and so $J(v-u^\#)$ is bounded by $O(r)$, which is dominated by $\e r^n$. Thus once again $\phi_\e(r,z) \to \infty$ as $r \to \infty$.

Notice that $\phi_\e$ is a strict distributional supersolution, with
\[
\deltastar \phi_\e \leq - \e (n-1) r^{-1} < 0
\]
in ${\mathcal D}$ by Step 2 since direct computation shows that $\deltastar (r^n+r) = -(n-1)/r$.

\smallskip \noindent
\emph{Comment.} The two perturbing terms $r^n$ and $r$ fulfill distinct roles in Step 3. Namely, $r^n$ ensures that $\phi_\e$ grows at infinity, while the $r$ term forces $\deltastar \phi_\e$ to be negative.

\subsection*{\texorpdfstring{Step 4. $\phi_\e$ attains its minimum on $\partial {\mathcal D}$}{Step 4. The minimum occurs on the boundary}}

From the continuity in Step 1 and the growth at infinity in Step 3, it follows that the continuous function $\phi_\e$ achieves its minimum in the closed right halfplane at some finite point $(r_\e,z_\e)$ with $0 \leq r_\e < \infty$ and $z_\e \in \R$. Since $\phi_\e$ is a strict distributional supersolution, its minimum cannot occur in ${\mathcal D}$, by the strong minimum principle; see for example \cite[Proposition 10.7]{B19}, noting that as required there the operator $\deltastar$ has no zero-th order terms. Thus the minimum point for $\phi_\e$ occurs on $\partial {\mathcal D}$, meaning that either $r_\e \in [0,\beta]$ and $z_\e$ is an integer multiple of $2t$, or else $r_\e=0$.

\subsection*{\texorpdfstring{Step 5. $\phi$ is nonnegative}{Step 5. The comparison function is nonnegative}}
If $r_\e=0$ in Step 4, then trivially $\phi_\e(0,z_\e)=0$.

If instead $r_\e \in [0,\beta]$ and $z_\e$ is an integer multiple of $2t$, then we may suppose $z_\e=0$ by periodicity. By \autoref{th:upperu} and \autoref{le:Zmzero}, the equilibrium potential for the ball satisfies $v(x,0) = E_B(t)$ for almost every $x$ with $|x| \leq \beta$, while \autoref{th:upperu} says that the potential for $K$ satisfies $u(x,0) \leq E_K(t)$ for all $x$. Since $E_B(t)=E_K(t)$ by hypothesis in \autoref{th:Jvustar}, it follows that $v(x,0) \geq u^\#(x,0)$ for almost every $x$ with $|x| \leq \beta$. Hence at the minimum point we find
\[
\phi_\e(r_\e,0)=\int_{\B^n(r_\e)} (v(x,0)-u^\#(x,0)) \, dx + \e (r_\e^n+r_\e) \geq 0 .
\]

Thus in each case the minimum value is nonnegative, so that $\phi_\e(r,z) \geq 0$ for all $r,z$. Letting $\e \to 0$ yields that $\phi(r,z) \geq 0$, proving \autoref{th:Jvustar}.

\section*{\bf Acknowledgments}
Richard Laugesen's research was supported by grants from the Simons Foundation (\#964018) and the National Science Foundation ({\#}2246537). We are grateful to Thomas Ransford for pointing us to the relevant portion of Fuglede's paper.

\section*{\bf AI usage statement}
The authors developed the manuscript without use of artificial intelligence. During final revisions,
OpenAI's ChatGPT/Codex with the GPT-5.6 Sol model assisted with a detailed
critical review in which it checked
calculations, logical dependencies, assumptions, notation, exposition and
citations. The review suggested some corrections and clarifications, in response to which the authors added a few paragraphs.

\appendix

\section{\bf Potential theory in the strip}
\label{sec:background}

Fix $n\geq 2$ throughout this appendix.

\subsection*{Inner capacity zero} A set $Z \subset \Rnp$ is said to have inner $(n-1)$-capacity zero if $\capnone(Z^\prime)=0$ for every compact $Z^\prime \subset Z$.
\begin{lemma}[Inner capacity zero implies measure zero] \label{le:Zmzero}
If $Z \subset \Rnp$ is measurable and has inner $(n-1)$-capacity zero then $Z$ has $(n+1)$-dimensional Lebesgue measure zero. If $Z \subset \Rn$ is measurable with inner $(n-1)$-capacity zero then $Z$ has $n$-dimensional Lebesgue measure zero.
\end{lemma}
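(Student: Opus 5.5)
The plan is to prove the contrapositive via positive-measure compact subsets: if $Z$ has positive Lebesgue measure, then $Z$ contains a compact set of positive $(n-1)$-capacity. Suppose first $Z \subset \Rnp$ is measurable with $|Z|_{n+1}>0$. By inner regularity of Lebesgue measure there is a compact $Z^\prime \subset Z$ with $|Z^\prime|_{n+1}>0$. On $Z^\prime$ take the normalized Lebesgue measure $\mu = 1_{Z^\prime}\,d\xh/|Z^\prime|$, a probability measure. Its $(n-1)$-Riesz energy is
\[
\frac{1}{|Z^\prime|^2}\int_{Z^\prime}\!\int_{Z^\prime} \frac{d\xh\, d\yh}{|\xh-\yh|^{n-1}} ,
\]
and since the kernel $|\xh|^{-(n-1)}$ is locally integrable in $\Rnp$ (exponent $n-1<n+1$), the inner integral is bounded uniformly in $\xh$ by $\int_{|\yh|<\diam Z^\prime} |\yh|^{-(n-1)}\,d\yh<\infty$. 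Hence $V_{n-1}(Z^\prime)<\infty$, so $\capnone(Z^\prime)>0$, contradicting inner capacity zero.

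For the second claim, with $Z \subset \Rn$ measurable and $|Z|_n>0$, again choose a compact $Z^\prime\subset Z$ of positive $n$-dimensional measure and put the normalized $n$-dimensional Lebesgue measure on $Z^\prime$, regarded as a measure on $\Rn\times\{0\}\subset\Rnp$. The energy now involves $\int_{Z^\prime}|x-y|^{-(n-1)}\,dy$ with $x,y\in\Rn$; the exponent $n-1$ is strictly less than the dimension $n$, so this is again bounded uniformly in $x$ by an integral over a ball of radius $\diam Z^\prime$. Thus the energy is finite and $\capnone(Z^\prime)>0$, again a contradiction.

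There is no real obstacle; the only points to check are that the definition of capacity in \autoref{sec:capacities} makes capacity positive exactly when some probability measure has finite energy (stated there), and that the local integrability exponents are correct: $n-1<n+1$ in the first case, and $n-1<n$ in the second, which is the sharper one and the reason the hyperplane statement still holds.
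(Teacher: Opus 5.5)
Your proposal is correct and follows the same route as the paper: by contrapositive, use inner regularity to find a compact subset $Z^\prime$ of positive measure, then show that normalized Lebesgue measure on $Z^\prime$ has finite $(n-1)$-energy. You also supply the local-integrability check (exponent $n-1<n+1$, respectively $n-1<n$) that the paper dismisses as ``easily shown.''
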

The lemma is well known.
\begin{proof}
We start by proving the contrapositive of the first claim. Suppose $Z$ has positive $(n+1)$-dimensional Lebesgue measure, so that by inner regularity some compact subset $Z^\prime \subset Z$ also has positive measure. By restricting Lebesgue measure to $Z^\prime$ and normalizing, one obtains a probability measure that is easily shown to have finite $(n-1)$-energy. Hence $\capnone(Z^\prime)>0$, and so $Z$ does not have inner $(n-1)$-capacity zero.

The second claim, for $Z \subset \Rn$, is proved similarly.
\end{proof}
\begin{lemma}[Inner capacity zero does not change the energy] \label{le:Z}
Suppose $K,Z \subset \Rnp$ and that $K$ and $K \cup Z$ are compact. If $Z$ has inner $(n-1)$-capacity zero then $E_K(t)=E_{K \cup Z}(t)$ for all $t \in (0,\infty]$ large enough that the strip $S(t)$ contains $K \cup Z$.
\end{lemma}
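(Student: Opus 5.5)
The inequality $E_{K \cup Z}(t) \leq E_K(t)$ is immediate from monotonicity. Every probability measure on $K$ is also a probability measure on $K \cup Z$, so the minimum defining $E_{K\cup Z}(t)$ is taken over a larger class. So the work is to prove $E_K(t) \leq E_{K\cup Z}(t)$. We may assume $E_{K\cup Z}(t)<\infty$; otherwise there is nothing to show.

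Let $\mu$ be an equilibrium measure for $K\cup Z$, so that $\int\!\int G_t\,d\mu\,d\mu = E_{K\cup Z}(t) < \infty$. The key claim is that $\mu(Z\setminus K)=0$. The set $Z\setminus K = (K\cup Z)\setminus K$ is relatively open in the compact set $K \cup Z$, hence Borel and $\sigma$-compact. By inner regularity of the finite Borel measure $\mu$, it therefore suffices to show $\mu(Z^\prime)=0$ for every compact $Z^\prime\subset Z\setminus K$.

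So suppose some compact $Z^\prime \subset Z$ had $\mu(Z^\prime)>0$, and let $\mu^\prime = \mu|_{Z^\prime}/\mu(Z^\prime)$ be the normalized restriction. I then need to show that $\mu^\prime$ has finite $(n-1)$-Riesz energy, which would contradict $\capnone(Z^\prime)=0$.
\begin{itemize}
\item When $t=\infty$ this is direct: the kernel is positive, so restricting $\mu$ to $Z^\prime$ can only decrease the double integral.
\item When $t<\infty$ I compare $G_t$ with the Riesz kernel on the compact set $K\cup Z\subset S(t)$. Write $G_t(\xh,\yh)=|\xh-\yh|^{-(n-1)}+H_t(\xh,\yh)$. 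The remainder $H_t$ is continuous, hence bounded, on $(K\cup Z)\times(K\cup Z)$, because the reflected points $\rho_j(\yh)$ with $j\ne0$ stay a positive distance from $\overline{S(t')}$ for some $t'<t$ containing $K\cup Z$. When $n=2$ this boundedness also covers the renormalizing constants.
\item Hence $\int\!\int |\xh-\yh|^{-(n-1)}\,d\mu\,d\mu \le E_{K\cup Z}(t)+C<\infty$.
\end{itemize}
Since the Riesz kernel is positive, restricting $\mu$ to $Z^\prime$ and normalizing still gives finite Riesz energy. This contradicts $\capnone(Z^\prime)=0$, so $\mu$ is concentrated on $K$.

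With the claim in hand, $\mu$ is a probability measure on $K$, and therefore $E_K(t)\le \int\!\int G_t\,d\mu\,d\mu=E_{K\cup Z}(t)$. This gives the reverse inequality and finishes the proof.

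The main obstacle is the boundedness of $H_t$ near $\partial S(t)$ when $K\cup Z$ comes close to the strip boundary. This is handled by the requirement that the open strip $S(t)$ contain the compact set $K\cup Z$: compactness gives a positive margin, so the reflected images stay uniformly separated from $K\cup Z$. In the case $n=2$, one should also note that $G_t$ is bounded below on bounded sets, which the definition of the kernel already guarantees.
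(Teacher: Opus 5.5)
Your proof is correct and follows essentially the same route as the paper. One inequality comes from monotonicity. For the other, you show that a finite-energy measure on $K\cup Z$ cannot charge any compact $Z'\subset Z\setminus K$, because its normalized restriction would give $Z'$ finite $(n-1)$-energy. The only difference is that the paper cites \cite[Lemma 2.2]{CL26} to pass from finite $G_t$-energy to finite Riesz energy, whereas you prove this directly from the boundedness of the remainder $H_t$ on $(K\cup Z)\times(K\cup Z)$; this also cleanly handles the possible negativity of $G_t$ when $n=2$.
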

When $t=\infty$, recall from \autoref{sec:stripenergy} that $G_\infty$ is the Newtonian kernel, $E_K(\infty)$ is the Newtonian energy $V_{n-1}(K)$ in $\Rnp$, and $\mu_\infty$ is the associated Newtonian equilibrium measure on $K$.
\begin{proof}
In the whole space case ($t=\infty$), the lemma asserts that $V_{n-1}(K) = V_{n-1}(K \cup Z)$, which is well known; see for example \cite[Lemma 22]{CL25b}. The proof of that case adapts easily to strip energies with $t<\infty$, as follows.

We may suppose $K$ and $Z$ are disjoint, by replacing $Z$ with the smaller set $Z \setminus K$. It suffices to show $E_{K \cup Z}(t) \geq E_K(t)$, since the other direction is immediate from set-monotonicity of the energy. We may suppose $E_{K \cup Z}(t)$ is finite.

Let $\mu$ be a probability measure on $K \cup Z$ having finite energy with respect to the kernel $G_t$. We claim $\mu(Z)=0$. Suppose $\mu(Z)>0$, so that $\mu(\Rn \setminus K)>0$ and hence the open set $\Rn \setminus K$ contains some compact subset $\widetilde{Z}$ with positive $\mu$-measure. Then the compact set
\[
Z^\prime=\widetilde{Z} \cap (K \cup Z) = \widetilde{Z} \cap Z \subset Z
\]
has $\mu(Z^\prime)=\mu(\widetilde{Z})>0$. Thus the restriction of $\mu$ to $Z^\prime$ has finite $G_t$-energy and positive $\mu$-measure. Hence $E_{Z^\prime}(t)$ is finite, and so $E_{Z^\prime}(\infty)=V_{n-1}(Z^\prime)$ is also finite (see \cite[Lemma 2.2]{CL26}), so that $Z^\prime$ has positive $(n-1)$-capacity, contradicting that $Z$ has inner capacity zero. Therefore $\mu(Z)=0$ as claimed, and so $\mu$ is supported on $K$, which implies that $E_{K \cup Z}(t) \geq E_K(t)$ as desired.
\end{proof}

\subsection*{Properties of equilibrium potentials} Suppose $K \subset \Rnp$ is compact and has positive $(n-1)$-capacity, so that $V_{n-1}(K)$ is finite. Let $t \in (0,\infty]$ and assume $t$ is large enough that $K \subset S(t)$. Write $\mu_t$ for an equilibrium measure associated to the energy $E_K(t)$, as in \autoref{sec:stripenergy}.

The equilibrium potential is
\[
U_t(\xh) = \int_K G_t(\xh,\yh) \, d\mu_t(\yh) , \qquad \xh \in \overline{S(t)} .
\]
As is true of all potentials with kernel $G_t$, this equilibrium potential is superharmonic, and is smooth and harmonic away from the support of $\mu_t$. According to the next theorem, the equilibrium potential has the special property that it is bounded above by the energy value.
\begin{theorem}[Fundamental theorem of equilibrium potentials] \label{th:upperu}
Under the assumptions above we have
\begin{equation} \label{eq:potentialinequality}
U_t(\xh) \leq E_K(t) , \qquad \xh \in \overline{S(t)} .
\end{equation}
Equality holds at all interior points of $K$ and also at all boundary points except perhaps on an exceptional subset $Z_t \subset \partial K$ of inner $(n-1)$-capacity zero. Inequality \eqref{eq:potentialinequality} is strict on the unbounded component of the complement $\overline{S(t)} \setminus K$. Furthermore:
\begin{itemize}
\item $U_t$ is continuous at each point in $\overline{S(t)} \setminus Z_t$,
\item $U_t$ is bounded when $n \geq 3$, and when $n=2$ it is bounded above and locally bounded below,
\item $Z_t$ is a countable union of compact sets each having $(n-1)$-capacity zero.
\end{itemize}
\end{theorem}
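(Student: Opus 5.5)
The plan is to reduce \autoref{th:upperu} to the classical Frostman theory for Newtonian potentials in $\Rnp$ by periodization. First I would extend $U_t$ and $\mu_t$ to all of $\Rnp$ by the method of reflections. Let $\widetilde{K}=\cup_j \rho_j(K)$ and let $\widetilde{\mu}$ be the sum of the pushforwards $(\rho_j)_*\mu_t$. Then the kernel formula \eqref{eq:rieszkernelq} says that, for $n\geq 3$,
\begin{equation} \label{eq:extendedpotential}
U_t(\xh)=\int_{\widetilde{K}} \frac{1}{|\xh-\yh|^{n-1}}\,d\widetilde{\mu}(\yh), \qquad \xh \in \Rnp .
\end{equation}
So $U_t$ is a genuine Newtonian potential of a locally finite positive measure, and it is invariant under each $\rho_j$. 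For $n=2$ the same holds after subtracting the renormalizing constants $1/|2tj|$, which only adds a locally bounded harmonic correction. This representation immediately gives superharmonicity and lower semicontinuity, and it shows that $U_t$ is smooth and harmonic off $\widetilde{K}$.

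Next I would prove the Frostman inequalities by the standard variational argument with the kernel $G_t$. To show $U_t\geq E_K(t)$ quasi-everywhere on $K$, suppose instead that $\{U_t<E_K(t)-\delta\}\cap K$ contains a compact set $F$ of positive $(n-1)$-capacity. Then $F$ has finite $G_t$-energy, because $G_t$ differs from the Newtonian kernel by a bounded term on compacts (compare \cite[Lemma 2.2]{CL26}). Perturbing $\mu_t$ toward the equilibrium measure of $F$ would lower the energy, a contradiction. Next, $U_t\leq E_K(t)$ holds on $\operatorname{supp}\mu_t$: otherwise, by lower semicontinuity, $U_t>E_K(t)$ on a neighborhood carrying positive $\mu_t$-mass, and moving that mass elsewhere lowers the energy.

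Then I would extend the upper bound to all of $\overline{S(t)}$ by the maximum principle. Here I would use the continuity principle, namely Maria--Frostman domination for Newtonian kernels applied to the periodized measure $\widetilde{\mu}$. On the complement of $\operatorname{supp}\widetilde{\mu}$ the function $U_t$ is harmonic. Its limsup at the support is at most $E_K(t)$, and at infinity it tends to $0$ when $n\geq3$ or to $-\infty$ when $n=2$, by \autoref{pr:potentialdecay}. By periodicity, the only infinity to consider is $|x|\to\infty$. The strong maximum principle then gives strict inequality on the unbounded component.

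Next I would handle the exceptional set. Define $Z_t=\{\xh\in K: U_t(\xh)<E_K(t)\}$. Writing $Z_t=\cup_m\{U_t\leq E_K(t)-1/m\}\cap K$ exhibits $Z_t$ as a countable union of compact sets, each of capacity zero by the argument above; compactness holds because $U_t$ is lower semicontinuous. At interior points of $K$, superharmonicity plus the mean value inequality forces $U_t=E_K(t)$, since $Z_t$ has Lebesgue measure zero by \autoref{le:Zmzero}. Continuity at points of $K\setminus Z_t$ comes from combining $\liminf U_t\geq U_t(\xh)=E_K(t)$, which is lower semicontinuity, with the global upper bound. Off $K$, $U_t$ is smooth. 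For boundedness, $U_t\leq E_K(t)$ is the upper bound; the lower bound is $U_t>0$ when $n\geq3$, and for $n=2$ it follows from the local lower boundedness of $G_t$ noted after \eqref{eq:rieszkernelone}.

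I expect the main obstacle to be the upper bound off the support. The classical Maria--Frostman principle requires the continuity principle for $\widetilde{\mu}$, which has infinite total mass, and the $n=2$ renormalization is delicate. My first attempt would be to truncate $\widetilde{\mu}$ to finitely many reflected copies, apply Frostman's domination principle to the truncated Newtonian potential together with a harmonic tail that is uniformly small on compacts, and then pass to the limit.
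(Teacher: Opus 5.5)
\textbf{Verdict.} Your outline follows the paper's proof, but the $n=2$ Frostman step has a genuine gap.

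\textbf{What matches.} You periodize with the maps $\rho_j$ and get a Frostman maximum principle for $G_t$ from the Newtonian one. From there you read off equality off a set of inner capacity zero, equality at interior points via the mean value inequality, strictness via the strong maximum principle, continuity from lower semicontinuity plus the upper bound, and $Z_t$ as a union of compact sublevel sets. Replacing Fuglede's theorem by the direct perturbation $(1-s)\mu_t+s\nu$ is a legitimate self-contained substitute. It does not need $G_t>0$, so it avoids the paper's shift $G_t+C$ when $n=2$.

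\textbf{The case $n\ge 3$.} Your truncation closes here. Positivity of the kernel gives $U_J\le U_t$ on every copy $\rho_\ell(\operatorname{supp}\mu_t)$. The invariance $U_t\circ\rho_\ell=U_t$ reduces that supremum to $\operatorname{supp}\mu_t$. The tail is bounded by a constant times $\sum_{|j|>J}|j|^{1-n}$, uniformly on the strip. The paper reaches the same conclusion in one step by applying Landkof's Theorem~1.10 to $\widetilde{\mu}$.

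Your third paragraph asserts that the limsup of $U_t$ at the support is at most $E_K(t)$. That does not follow from the bound on $\operatorname{supp}\mu_t$ together with harmonicity. It is exactly what Frostman's principle supplies, so that paragraph stands only once the principle is proved.

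\textbf{The gap: $n=2$.} Your last paragraph leaves this case open, and the mechanism you propose there does not suffice. Write $U_t=U_J-c_J+C_t+T_J$, where
\begin{itemize}
\item $U_J$ is the Newtonian potential in $\R^3$ of $\mu_J=\sum_{|j|\le J}(\rho_j)_*\mu_t$,
\item $c_J=\sum_{1\le|j|\le J}1/(2t|j|)$,
\item $C_t=\frac1t(\gamma-\log 4t)$.
\end{itemize}
Frostman gives $U_J\le \sup U_J$ taken over the union of the copies $\rho_\ell(\operatorname{supp}\mu_t)$, $|\ell|\le J$. So you need $U_J(\rho_\ell\xh)-c_J\le E_K(t)-C_t+o(1)$ uniformly in $|\ell|\le J$ and $\xh\in\operatorname{supp}\mu_t$.

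By periodicity, $U_J(\rho_\ell\xh)-c_J=U_t(\xh)-C_t-T_J(\rho_\ell\xh)$. For $|\ell|$ comparable to $J$, the points $\rho_\ell\xh$ leave every compact set, so smallness of $T_J$ on compacts says nothing there. Seen from $\rho_\ell\xh$, the truncation window is lopsided, and for $|\ell|=J$ one finds $T_J(\rho_\ell\xh)\approx \frac{1}{2t}\log J$ plus $O(1)$ corrections. The leading part has the favorable sign. What is missing is a lower bound $T_J(\rho_\ell\xh)\ge -o(1)$ uniform in $\ell$, or some substitute for it.

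\textbf{How the paper closes it.} First it reindexes using $|\rho_\ell\xh-\rho_j\yh|=|\xh-\rho_{\pm(j-\ell)}\yh|$. This returns the evaluation point to $K$, with a window $L$ of $2J+1$ consecutive integers containing $0$. Second, it subtracts $1/(2t(|j|+1)+\delta)$, with $\delta=\diam K$, instead of $1/(2t|j|)$. This change has three effects:
\begin{itemize}
\item every summand becomes nonnegative on $K\times K$;
\item the symmetric window maximizes the subtracted sum, so the smaller amount may be subtracted on the right;
\item nonnegativity lets you enlarge $L$ to all $j\neq 0$.
\end{itemize}
The result is a bound independent of $J$. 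One then lets $J\to\infty$ on the left and adds back the convergent constant $\sum_{j\neq0}\bigl(\frac{1}{2t(|j|+1)+\delta}-\frac{1}{2t|j|}\bigr)$.
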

\begin{proof}
Consider first $n \geq 3$.

\subsubsection*{Step 1} We will show the kernel $G_t$ satisfies Frostman's maximum principle, meaning that for each probability measure $\mu$ on the arbitrary compact set $K \subset S(t)$ we have
\begin{equation} \label{frostman}
\sup_{\overline{S(t)}} U \leq \sup_K U ,
\end{equation}
where $U(\xh) = \int_K G_t(\xh,\yh) \, d\mu(\yh)$ is the potential generated by $\mu$. One could prove this principle by mimicking the Euclidean proof \cite[pp.{\,}69--72]{L72} and using the known behavior of $G_t$ at horizontal infinity. But we prefer in what follows to reduce directly to the Euclidean situation by employing the explicit formula for $G_t$.

The definition of the kernel in \autoref{sec:stripenergy} gives that
\begin{align}
U(\xh)
& = \sum_{j \in \Z} \int_K \frac{1}{|\xh-\rho_j(\yh)|^{n-1}} \, d\mu(\yh) \notag \\
& = \int_{\widetilde{K}} \frac{1}{|\xh-\yh|^{n-1}} \, d\widetilde{\mu}(\yh) \label{eq:extendedpotential}
\end{align}
where the pushforward measure
\[
\widetilde{\mu} = \sum_{j \in \Z} \mu \circ \rho_j^{-1}
\]
is supported on a set
\[
\widetilde{K} = \cup_{j \in \Z} \, \rho_j(K) \subset \Rnp
\]
that is obtained by applying the family $\{ \rho_j \}$ of translations and reflections to $K$. Notice that \eqref{eq:extendedpotential} extends $U$ to a function on all of $\Rnp$ and shows that it equals the Newtonian potential of $\widetilde{\mu}$. Frostman's maximum principle for Newtonian potentials of arbitrary measures (see Landkof \cite[Theorem 1.10]{L72} with $\alpha=2$) now implies that on $\Rnp$ we have $U \leq M := \sup_{\widetilde{K}} U$.

To prove inequality \eqref{frostman}, it remains only to show that $\sup_{\widetilde{K}} U = \sup_K U$. When taking the supremum over $\widetilde{K}$ we need only consider its subsets $K$ and $\rho_1(K)$, since \eqref{eq:rhodefn} implies that $\rho_{j+2}(\yh)=\rho_j(\yh)+(0,4t)$ and so the measure $\widetilde{\mu}$ and potential $U$ are $4t$-periodic in the vertical direction. Further, the supremum of $U$ over $\rho_1(K)$ equals the supremum over $K$ since $U \circ \rho_1 = U$ (noting that $|\rho_1(\xh)-\rho_j(\yh)|=|\xh-\rho_{1-j}(\yh)|$). Thus $\sup_{\widetilde{K}} U = \sup_K U$, and so we have proved inequality \eqref{frostman} when $n \geq 3$.

\subsubsection*{Step 2}
Step 1 applies with $K$ replaced by the compact support of any probability measure on $K$, and thus establishes Frostman's maximum principle in Fuglede's support-based sense \cite[\S2.1]{F60}. For $n\geq3$, $G_t$ is a positive, symmetric, lower semicontinuous kernel on $X=\overline{S(t)}$. Thus Fuglede \cite[Theorem 2.4 and Remark 1]{F60} applies directly for $n\geq3$, showing that the equilibrium potential is at most $E_K(t)$ everywhere and equals $E_K(t)$ on $K$ outside an exceptional set $Z_t$ of inner $G_t$-capacity zero. The case $n=2$ is justified separately in Step 3 below. (Fuglede \cite[Lemma 2.3.1(v)]{F60} shows that his ``nearly everywhere'' condition is equivalent to the exceptional set having inner capacity zero with respect to $k=G_t$, which is equivalent by \cite[Lemma 2.2]{CL26} to inner capacity zero with respect to $G_\infty$, which is $(n-1)$-capacity zero.) The set $Z_t$ of inner capacity zero necessarily has Lebesgue measure zero in $\Rnp$ by \autoref{le:Zmzero}, and so the supermeanvalue property of the potential implies that equality holds in \eqref{eq:potentialinequality} at all interior points of $K$. Therefore $Z_t \subset \partial K$.

Lastly, on the unbounded component of $\Rnp \setminus K$, the strong maximum principle applied to $U$ yields strict inequality in \eqref{eq:potentialinequality}.


\subsubsection*{Step 3} Now consider the case $n=2$. We will again establish Frostman's maximum principle \eqref{frostman}. Then one may apply Fuglede's result on $K$, after shifting the restricted kernel by a constant, as explained at the end of this step.

The challenge in proving Frostman's principle when $n=2$ is that we no longer know the kernel is positive, due to the subtraction of the terms in the definition of $G_t$ that ensure convergence of the series. Those terms must be handled carefully in what follows.

Let us consider at first the partial sum of the measure $\widetilde{\mu}$, denoted
\[
\mu_J = \sum_{|j| \leq J} \mu \circ \rho_j^{-1} ,
\]
which is supported on the set $K_J = \cup_{|j| \leq J} \, \rho_j(K)$ that is obtained from $K$ by applying the translations and reflections $\rho_j$ for $-J \leq j \leq J$. Applying Frostman's maximum principle (Landkof \cite[Theorem 1.10]{L72} with $\alpha=2, p=3$) to the Newtonian potential $U_J(\xh) = \int_{K_J} |\xh-\yh|^{-1} \, d\mu_J(\yh)$ on $\R^3$, we get in particular that
\[
U_J(\xih) \leq \sup_{K_J} U_J , \qquad \xih \in \overline{S(t)} .
\]
This inequality can be rewritten as
\[
\int_K \sum_{|j| \leq J} |\xih-\rho_j(\yh)|^{-1} \, d\mu(\yh) \leq \sup_{\xh \in K_J} \int_K \sum_{|j| \leq J} |\xh-\rho_j(\yh)|^{-1} \, d\mu(\yh) .
\]

Choose $\ell = \ell(J)$ with $|\ell| \leq J$ such that the supremum on the right side is approached in the set $\rho_\ell(K)$. Then the preceding inequality implies for all $\xih \in S(t)$ that
\begin{align}
\int_K \sum_{|j| \leq J} |\xih-\rho_j(\yh)|^{-1} \, d\mu(\yh)
& \leq \sup_{\xh \in K} \int_K \sum_{|j| \leq J} |\rho_\ell(\xh)-\rho_j(\yh)|^{-1} \, d\mu(\yh) \label{eq:leftside} \\
& = \sup_{\xh \in K} \int_K \sum_{j \in L} |\xh-\rho_j(\yh)|^{-1} \, d\mu(\yh) \label{eq:rightside}
\end{align}
by using the definitions of $\rho_\ell$ and $\rho_j$ to check that
\[
|\rho_\ell(\xh)-\rho_j(\yh)|
=
\begin{cases}
|\xh - \rho_{j-\ell}(\yh)| & \text{if $\ell$ is even,} \\
|\xh - \rho_{\ell-j}(\yh)| & \text{if $\ell$ is odd,} \\
\end{cases}
\]
and then translating the indices in \eqref{eq:leftside} to arrive at the new index set, which is
\[
L =
\begin{cases}
\{ -J-\ell,\ldots,J-\ell \} & \text{if $\ell$ is even,} \\
\{ -J+\ell,\ldots,J+\ell \} & \text{if $\ell$ is odd.} \\
\end{cases}
\]
Note that $L$ consists of $2J+1$ consecutive integers, and $0 \in L$.

Write $\delta = \diam(K)$. On the left side of \eqref{eq:leftside} we subtract
\[
\sum_{1 \leq |j| \leq J} \frac{1}{2t(|j|+1)+\delta}
\]
and on the right side of \eqref{eq:rightside} we subtract the smaller quantity
\[
\sum_{j \in L \setminus \{ 0 \}} \frac{1}{2t(|j|+1)+\delta} ,
\]
so that
\begin{equation} \label{eq:renorm1}
\begin{split}
& \int_K \left[ \frac{1}{|\xih-\yh|} + \sum_{1 \leq |j| \leq J} \! \left( \frac{1}{|\xih-\rho_j(\yh)|} - \frac{1}{2t(|j|+1)+\delta} \right) \right] \! d\mu(\yh) \\
& \leq \sup_{\xh \in K} \int_K \left[ \frac{1}{|\xh-\yh|} + \sum_{j \in L \setminus \{ 0 \}} \! \left( \frac{1}{|\xh-\rho_j(\yh)|} - \frac{1}{2t(|j|+1)+\delta} \right) \right] \! d\mu(\yh) .
\end{split}
\end{equation}
If $\xh,\yh \in K$ and $j \in \Z$ then
\[
|\xh-\rho_j(\yh)| \leq |\xh-\yh| + |\yh-\rho_j(\yh)| \leq \delta + 2t(|j|+1) .
\]
Thus each summand on the right side of \eqref{eq:renorm1} is nonnegative, and so the right side increases if we expand the range of summation to include all $j \neq 0$. Hence
\begin{equation} \label{eq:renorm2}
\begin{split}
& \int_K \left[ \frac{1}{|\xih-\yh|} + \sum_{1 \leq |j| \leq J} \! \left( \frac{1}{|\xih-\rho_j(\yh)|} - \frac{1}{2t(|j|+1)+\delta} \right) \right] \! d\mu(\yh) \\
& \leq \sup_{\xh \in K} \int_K \left[ \frac{1}{|\xh-\yh|} + \sum_{j \neq 0} \! \left( \frac{1}{|\xh-\rho_j(\yh)|} - \frac{1}{2t(|j|+1)+\delta} \right) \right] \! d\mu(\yh) .
\end{split}
\end{equation}
The right side of \eqref{eq:renorm2} is independent of $J,L,\ell$. We let $J \to \infty$ on the left side and then on both sides we add the finite constant
\[
\sum_{j \neq 0} \! \left( \frac{1}{2t(|j|+1)+\delta} - \frac{1}{|2tj|} \right) ,
\]
thereby obtaining that
\[
\int_K G_t(\xih,\yh) \, d\mu(\yh) \leq \sup_{\xh \in K} \int_K G_t(\xh,\yh) \, d\mu(\yh)
\]
for all $\xih \in S(t)$. This last inequality proves Frostman's maximum principle \eqref{frostman} for $n=2$. To complete the proof of Step 3, we apply Fuglede's result as in Step 2, with one additional modification as follows.

To apply Fuglede's result despite the possible negative values of the kernel $G_t$ on the noncompact strip, restrict $G_t$ to the compact set $K\times K$. This restricted kernel is bounded below, so we may choose a constant $C$ such that $k_C=G_t+C$ is positive on $K\times K$. Fuglede's compact-space case \cite[Theorem 2.4 and Remark 1]{F60}, applied with $X=K$ and kernel $k_C$, gives $U_t+C=E_K(t)+C$ nearly everywhere on $K$ and $U_t+C\leq E_K(t)+C$ on $\operatorname{supp}\mu_t$. The support-based maximum principle just proved extends the latter bound to the whole strip. Adding $C$ changes neither the equilibrium measure nor which compact subsets have finite energy, and so Fuglede's exceptional set has inner $G_t$-capacity zero. By \cite[Lemma 2.2]{CL26}, it has inner $(n-1)$-capacity zero as asserted in Step 2.

\subsubsection*{Step 4} Continuity of $U_t$ is clear for $\xh$ in the complement of the support of $\mu_t$, and so in particular $U_t$ is continuous on $\overline{S(t)} \setminus K$. Continuity also holds at each $\xh \in K \setminus Z_t$ since
\[
E_K(t) = U_t(\xh) \leq \liminf_{\yh \to \xh} U_t(\yh) \leq \limsup_{\yh \to \xh} U_t(\yh) \leq E_K(t) ,
\]
where the first equality holds as proved above in Steps 1, 2, 3, using that $\xh \notin Z_t$; and the first inequality holds by lower semicontinuity of the superharmonic function $U_t$; the second inequality is obvious; and the third inequality follows by \eqref{eq:potentialinequality}.

\subsubsection*{Step 5} We know from \eqref{eq:potentialinequality} that $U_t$ has upper bound $E_K(t)$. When $n \geq 3$, nonnegativity of $G_t$ provides the obvious lower bound of $0$.

Suppose $n=2$. We observed just after the definition \eqref{eq:rieszkernelone} that the kernel $G_t(\xh,\yh)$ is bounded below provided $\xh$ and $\yh$ lie in some bounded set, which holds in particular if $\xh$ lies in a bounded subset of $\overline{S(t)}$ and $\yh \in K$. Thus $U_t(\xh)$ is locally bounded below.

\subsubsection*{Step 6} The exceptional set is
\[
Z_t = \{ \xh \in K : U_t(\xh) < E_K(t) \} = \cup_{k=1}^\infty Z_t^{(k)}
\]
where
\[
Z_t^{(k)} = \{ \xh \in K : U_t(\xh) \leq E_K(t)-1/k \}
\]
is a sublevel set of the potential on $K$. Each $Z_t^{(k)}$ is compact by lower semicontinuity of $U_t$, and necessarily has $(n-1)$-capacity zero because $Z_t$ has inner capacity zero. Thus $Z_t$ is a countable union of compact sets each having $(n-1)$-capacity zero.
\end{proof}

\bibliographystyle{plain}

\vspace*{-4pt}

\end{document}